\DeclareMathOperator{\image}{im}
\DeclareMathOperator{\pr}{pr}
\DeclareMathOperator{\rs}{rs}
\DeclareMathOperator{\tame}{tame}
\DeclareMathOperator{\codim}{codim}
\DeclareMathOperator{\Vect}{Vect}
\DeclareMathOperator{\cont}{cont}
\DeclareMathOperator{\Vectf}{Vectf}
\DeclareMathOperator{\Repf}{Repf}
\DeclareMathOperator{\Spec}{Spec}
\DeclareMathOperator{\Exp}{Exp}
\DeclareMathOperator{\ord}{ord}
\DeclareMathOperator{\HHom}{\mathscr{H}om}
\DeclareMathOperator{\End}{End}
\DeclareMathOperator{\Isom}{Isom}
\DeclareMathOperator{\Strat}{Strat}
\DeclareMathOperator{\Coh}{Coh}
\DeclareMathOperator{\Rep}{Rep}
\DeclareMathOperator{\Gal}{Gal}
\DeclareMathOperator{\rank}{rank}
\DeclareMathOperator{\id}{id}
\DeclareMathOperator{\triv}{triv}
\DeclareMathOperator{\Ind}{Ind}
\DeclareMathOperator{\EExt}{\mathscr{E}xt}
\DeclareMathOperator{\et}{\text{\'et}}
\newcommand{\Z}{\mathbb{Z}}
\newcommand{\Q}{\mathbb{Q}}
\newcommand{\N}{\mathbb{N}}
\newcommand{\C}{\mathbb{C}}
\newcommand{\A}{\mathbb{A}}
\renewcommand{\P}{\mathbb{P}}
\renewcommand{\subset}{\subseteq}
\theoremstyle{theorem}
\newtheorem{Theorem}{Theorem}[section]
\newtheorem{Proposition}[Theorem]{Proposition}
\newtheorem{Lemma}[Theorem]{Lemma}
\newtheorem{Corollary}[Theorem]{Corollary}
\theoremstyle{definition} 
\newtheorem{Definition}[Theorem]{Definition}
\newtheorem{Definition*}[]{Definition}
\newtheorem{Remark}[Theorem]{Remark}
\subjclass[2010]{Primary: 14E20, 14E22}
\title{Regular singular stratified bundles and tame ramification}
\author{Lars Kindler}
\thanks{This work was supported by the Sonderforschungsbereich/Transregio 45 ``Periods,
moduli spaces and the arithmetic of algebraic varieties'' of the DFG} 
\begin{document}
\begin{abstract}Let $X$ be a smooth variety over an algebraically closed field $k$ of
	positive characteristic. We
	define and study a general notion of regular singularities for stratified bundles
	(i.e.~$\mathcal{O}_X$-coherent $\mathscr{D}_{X/k}$-modules)
	on $X$ without relying on resolution of singularities. The main result is that the
	category of regular singular stratified bundles with finite monodromy is
	equivalent to the category of continuous representations of the tame fundamental
	group on finite
	dimensional $k$-vector spaces. As a corollary we obtain that a stratified bundle with finite monodromy is
	regular singular if and only if it is regular singular along all curves mapping to
	$X$.\end{abstract}
\maketitle
\section{Introduction}
If $X$ is a smooth, connected, complex variety, then it is an elementary fact that a
vector bundle with flat connection and finite monodromy at a closed  point $x\in X(\C)$ is
automatically regular singular. This implies, together with the Riemann-Hilbert correspondence as developed in
\cite{Deligne/RegularSingular}, that the category of flat connections with finite
monodromy is equivalent to the category $\Repf_{\C}^{\cont}\pi_1^{\et}(X,x)$
of continuous representations of the \'etale fundamental group of $X$ on
finite dimensional complex vector spaces equipped with the discrete topology.
The goal of
this article is to establish an analogous statement over an algebraically closed field $k$ of
positive characteristic $p$.

If $X$ is a smooth, connected $k$-variety, then both the category of vector bundles with flat
connection and the category of coherent $\mathcal{O}_X$-modules with flat connection lack
many of the nice features that they have over the complex numbers. In particular they are not
Tannakian categories over $k$. To remedy this fact we pass to the perspective of
modules over the ring of differential operators $\mathscr{D}_{X/k}$ on $X$: Over $\C$, the
category of vector bundles with flat connection
is equivalent to the category of left-$\mathscr{D}_{X/\C}$-modules which are coherent as
$\mathcal{O}_X$-modules. It turns out that if $X$ is a smooth $k$-variety with $k$ of
positive characteristic, then the category of $\mathcal{O}_{X}$-coherent $\mathscr{D}_{X/k}$-modules
is Tannakian over $k$. This was already worked out in \cite{Saavedra}.
Following Grothendieck
and Saavedra we call an $\mathcal{O}_{X}$-coherent $\mathscr{D}_{X/k}$-module a
\emph{stratified bundle} (see Remark \ref{rem:names}).

The content of this article is the definition and study of a sensible notion of regular singularity for
stratified bundles in positive characteristic. The main result is the following:

\begin{Theorem}\label{thm:mainINTRO}Let $k$ be an algebraically closed field of positive
	characteristic. If $X$ is a smooth, connected $k$-variety, and $x\in
	X(k)$ a rational point,  then a
	stratified bundle $E$ on $X$ is regular singular with finite monodromy at $x$ if
	and only if $E$ is trivialized on a finite tame covering.
\end{Theorem}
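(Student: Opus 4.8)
The assertion is the finite--monodromy case of the paper's main equivalence between regular singular stratified bundles with finite monodromy and $\Repf_k^{\cont}\pi_1^{\tame}(X,x)$, so the plan is to prove the two implications directly; tracking morphisms then upgrades this to the equivalence of categories.

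\emph{Trivialized on a tame covering $\Rightarrow$ regular singular with finite monodromy.} After passing to a Galois closure (still tame) we may assume that there is a finite tame Galois covering $f\colon Y\to X$ with group $G$ and an isomorphism $f^*E\cong\mathcal O_Y^{\,r}$ of stratified bundles. Because $Y$ is connected, the canonical $G$-equivariant structure on $f^*E$ becomes, through this trivialization, a homomorphism $\rho\colon G\to GL_r(k)$, and faithfully flat descent along $f$ identifies $E$ with the associated stratified bundle $Y\times^{G,\rho}k^r$; in particular the monodromy group of $E$ is a quotient of the finite group $G$. For regular singularity I would argue over an arbitrary model $\bar X\supseteq X$ in the sense of the paper's definition (so that no resolution of singularities is needed): letting $\bar Y\to\bar X$ be the normalization of $\bar X$ in $k(Y)$, tameness of $f$ says exactly that $\bar f\colon\bar Y\to\bar X$ is tamely ramified along $\bar D=\bar Y\setminus Y$. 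The trivial stratified bundle $\mathcal O_{\bar Y}^{\,r}$ is regular singular in the strongest sense, and the crucial point is that $\bar f_*\mathcal O_{\bar Y}$ carries a log-stratification over $\bar X$ precisely \emph{because} $\bar f$ is tame: by Abhyankar's lemma $\bar f$ is, \'etale-locally on $\bar X$ near $D$, a product of Kummer covers $t\mapsto t^n$ with $p\nmid n$, and then the exponents $a/n$ lie in $\Z_p$, so the divided-power log-operators $\partial_t^{[m]}$ act on $t^{a/n}$ with coefficients $\binom{a/n}{m}\in\F_p\subseteq k$. Descending the associated-bundle construction produces a regular singular extension of $E$ over $\bar X$. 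I expect \textbf{this descent across tame ramification to be the main obstacle}: over $\C$ one would read off residues, which is not available in characteristic $p$, so one must verify directly the compatibility of the whole system of log divided-power operators with the finite flat map $\bar f$ --- and this is exactly where the hypothesis $p\nmid n$ is used.

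\emph{Regular singular with finite monodromy $\Rightarrow$ trivialized on a tame covering.} Finite monodromy yields, via Tannaka duality for the tensor subcategory generated by $E$, a connected finite \'etale Galois covering $h\colon Y\to X$ whose group is a finite quotient of $\pi_1^{\et}(X,x)$ and for which $h^*E$ is trivial; it remains to check that $h$ is tame. By the theorem of Kerz and Schmidt that tameness of a covering of a smooth variety can be tested on morphisms from smooth curves, it suffices to show that $C\times_X Y\to C$ is tame for every smooth curve $C$ and every morphism $\varphi\colon C\to X$. But $\varphi^*E$ is again a regular singular stratified bundle (regular singularity is stable under pullback) with finite monodromy, trivialized by $C\times_X Y$, so one is reduced to the one-dimensional local situation over $k\llparen t\rrparen$. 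There the local classification shows that a regular singular stratified $k\llparen t\rrparen$-module with finite monodromy has monodromy factoring through the tame quotient $\Gal\!\big(k\llparen t\rrparen^{\tame}/k\llparen t\rrparen\big)\cong\prod_{\ell\neq p}\Z_\ell$ --- equivalently, wild ramification forces an irregular singularity, the rank-one manifestation being the dichotomy between Artin--Schreier and Kummer coverings. Hence $C\times_X Y\to C$ is tamely ramified at each boundary point, i.e.\ tame, so $h$ is tame and trivializes $E$. Beyond the two external inputs (Kerz--Schmidt, and pullback-stability of regular singularity) the substantive step here is this local one-dimensional statement, which I would establish first in terms of the existence of a Frobenius-stable lattice.
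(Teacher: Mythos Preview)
Your first implication (tame $\Rightarrow$ regular singular with finite monodromy) follows the same route as the paper: the key fact is that $\bar f_*\mathcal{O}_{\bar Y}$ carries a log-stratification when $\bar f$ is tame (packaged in the paper as the isomorphism $\mathscr{D}_{\overline{Y}/k}(\log D_Y)\cong \bar f^*\mathscr{D}_{\overline{X}/k}(\log D_X)$ of Proposition~\ref{prop:logfunctoriality}), and then $E$ inherits regular singularity as a subobject of $(f_*\mathcal{O}_Y)^r$. Your Abhyankar-plus-explicit-exponents description is a fine way to see this.

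For the converse your strategy diverges from the paper and has a genuine gap. You assert that ``regular singularity is stable under pullback'' so that $\varphi^*E$ is regular singular on an arbitrary curve $\varphi\colon C\to X$, but the definition quantifies over all good partial compactifications of $X$, and to transport this to $C$ you would need, for each $c\in\overline C\setminus C$, a good partial compactification $(X,\overline X)$ to which $\varphi$ extends near $c$. Producing such an $\overline X$ amounts to resolving a Nagata compactification of $X$ near the image of $c$ so that the boundary becomes a normal crossings divisor, and this is not known in positive characteristic. In fact the paper derives the curve-testing criterion (Theorem~\ref{thm:curvesInGeneral}) as a \emph{consequence} of the main theorem, and Remark~\ref{rem:curves} flags that without the finite-monodromy hypothesis it is open and appears to need resolution. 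The paper's argument avoids curves altogether: every geometric discrete rank~$1$ valuation arises as the generic boundary point of some good partial compactification, so it suffices to prove the $(X,\overline X)$-version (Theorem~\ref{thm:gpcmain}). That is done via exponents: those of the Picard--Vessiot algebra $f_*\mathcal O_Y$ are torsion in $\Z_p/\Z$, a $\tau$-extension realizes them as rationals $a_i/b$ with $(b,p)=1$, a degree-$b$ Kummer cover makes them integral, Corollary~\ref{cor:essentialimage} then extends the bundle across the boundary as an honest stratified bundle, and finiteness of monodromy supplies an \'etale trivializing cover on the enlarged base---forcing the original cover to be dominated by a tame one. Your one-dimensional ``local classification'' would require essentially this same exponent machinery, and even granting it, the missing pullback step prevents the curve reduction from closing.
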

The notion of tameness used here is due to Wiesend and extensively studied in
\cite{Kerz/tameness}. From Theorem \ref{thm:mainINTRO} we easily obtain the following,
pleasantly conceptual statement:

\begin{Corollary}\label{cor:repsINTRO}If $X$ is a smooth, connected $k$-variety, then after
choice of a base point $x\in X(k)$, the fiber $E|_x$ of a regular singular stratified bundle $E$ carries a
functorial $\pi_1^{\tame}(X,x)$-action, and this functor induces an equivalence of
categories between the category of regular singular stratified bundles with finite monodromy at
$x$ and the category $\Repf_k^{\cont}\pi_1^{\tame}(X,x)$ of finite dimensional continuous
$k$-representations of $\pi_1^{\tame}(X,x)$.
\end{Corollary}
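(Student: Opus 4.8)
The plan is to obtain the Corollary from Theorem~\ref{thm:mainINTRO} together with Tannakian duality, using that the finite tame coverings of $X$ are classified by a well-behaved profinite group $\pi_1^{\tame}(X,x)$ (Wiesend--Kerz, \cite{Kerz/tameness}). Write $\Strat^{\rs}_{\mathrm f}(X)$ for the full subcategory of $\Strat(X)$ of regular singular stratified bundles with finite monodromy at $x$, and $\omega_x$ for the functor $E\mapsto E|_x$. The \textbf{first step} is to check that $\Strat^{\rs}_{\mathrm f}(X)$ is a Tannakian subcategory of $\Strat(X)$ on which $\omega_x$ restricts to a neutral fibre functor over $k$. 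By Theorem~\ref{thm:mainINTRO} an object of $\Strat^{\rs}_{\mathrm f}(X)$ is exactly a stratified bundle trivialized by some finite tame covering $Y\to X$. Since tame coverings are stable under fibre products and under passing to Galois closures, and since for a fixed finite \'etale $Y\to X$ the stratified bundles split by $Y$ form an abelian full subcategory of $\Strat(X)$ stable under subquotients, tensor products and duals (Galois descent of stratified bundles; here one uses that on a smooth variety the trivial stratified bundles form a copy of $\Vect_k$ closed under subquotients), it follows that $\Strat^{\rs}_{\mathrm f}(X)$ is closed under all Tannakian operations, while exactness and faithfulness of $\omega_x$ are inherited from $\Strat(X)$.

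The \textbf{second step} identifies the Tannaka dual. Since every object of $\Strat^{\rs}_{\mathrm f}(X)$ has finite monodromy, the Tannaka group $\pi := \pi\bigl(\Strat^{\rs}_{\mathrm f}(X),\omega_x\bigr)$ is pro-\'etale, i.e.\ the pro-(finite constant) group scheme attached to a profinite group that we still denote $\pi$, and $\Strat^{\rs}_{\mathrm f}(X)\simeq \Repf^{\cont}_k(\pi)$. It remains to produce a canonical isomorphism $\pi\cong\pi_1^{\tame}(X,x)$. The finite quotients of $\pi$ are the monodromy groups $G_E$ of objects $E$; if $E$ is split by a finite tame Galois cover $f\colon Y\to X$ with $G=\Gal(Y/X)$, then $G_E$ is a quotient of $G$, and conversely $f_*\mathcal{O}_Y$ with its canonical stratification lies in $\Strat^{\rs}_{\mathrm f}(X)$ and has monodromy group exactly $G$ (its associated representation is the regular representation, which is faithful). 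Hence the cofiltered system of finite quotients of $\pi$ is cofinal with $\{\Gal(Y/X)\}$ over finite tame Galois $Y\to X$; choosing compatibly the geometric points of the $Y$ above $x$ turns this into a functorial isomorphism $\pi\cong\varprojlim_Y\Gal(Y/X)=\pi_1^{\tame}(X,x)$. Alternatively, if one has already recorded the equivalence $\Strat_{\mathrm f}(X)\simeq\Repf^{\cont}_k\pi_1^{\et}(X,x)$, then by Theorem~\ref{thm:mainINTRO} $\Strat^{\rs}_{\mathrm f}(X)$ corresponds to the representations trivialized by tame covers, i.e.\ factoring through $\pi_1^{\et}(X,x)\twoheadrightarrow\pi_1^{\tame}(X,x)$, which is $\Repf^{\cont}_k\pi_1^{\tame}(X,x)$, and one is done.

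The \textbf{third step} is to make the equivalence and its functoriality explicit and to check compatibility of the actions. For a continuous $\rho\colon\pi_1^{\tame}(X,x)\to GL(V)$, factor it through $G=\Gal(Y/X)$ for a finite tame Galois $f\colon Y\to X$ and set $E_\rho:=(f_*\mathcal{O}_Y\otimes_k V)^{G}$; then $f^*E_\rho\cong\mathcal{O}_Y\otimes_k V$ is trivial, so $E_\rho\in\Strat^{\rs}_{\mathrm f}(X)$ by Theorem~\ref{thm:mainINTRO}, and computing the fibre at $x$ — using that $f^{-1}(x)$ is a $G$-torsor, trivialized by the chosen point above $x$, so that $(f_*\mathcal{O}_Y)|_x\cong k[G]$ — gives $E_\rho|_x\cong V$ compatibly with the $\pi_1^{\tame}$-actions. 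Conversely, for $E\in\Strat^{\rs}_{\mathrm f}(X)$, Theorem~\ref{thm:mainINTRO} and passage to a Galois closure give a finite tame Galois $f\colon Y\to X$ with $f^*E$ trivial, and Galois descent along $f$ equips $E|_x$ with a $\Gal(Y/X)$-action, hence a continuous $\pi_1^{\tame}(X,x)$-action; the equivalence $\{\text{stratified bundles split by }Y\}\simeq\Rep_k(\Gal(Y/X))$ shows the two constructions are mutually quasi-inverse and that $\omega_x$ is fully faithful on the relevant $\Hom$-groups, giving functoriality in $x$ from that of $\omega_x$.

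I expect the \textbf{main obstacle} to be the bookkeeping in the second step: showing that the abstract Tannaka group really \emph{is} $\pi_1^{\tame}(X,x)$ rather than merely an abstract profinite group with the same finite quotients — that is, matching the two cofiltered systems functorially via the base point $x$, and verifying that the monodromy group of $f_*\mathcal{O}_Y$ exhausts $\Gal(Y/X)$. This rests squarely on the fact, from \cite{Kerz/tameness}, that the finite tame coverings of $X$ are classified by the profinite group $\pi_1^{\tame}(X,x)$ and are stable under fibre products and Galois closures — precisely the input that Theorem~\ref{thm:mainINTRO} bridges to the differential side.
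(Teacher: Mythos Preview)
Your proposal is correct and follows essentially the same approach as the paper. The paper does not give an explicit proof of this corollary at all---it is stated as an immediate consequence of Theorem~\ref{thm:mainINTRO}---but the argument you spell out is precisely the one implicit in the paper's development: the Tannakian machinery needed (that $f_*\mathcal{O}_Y$ has monodromy group exactly $\Gal(Y/X)$, that the Picard--Vessiot torsor of a finite bundle is the minimal trivializing Galois cover, etc.) is established in Section~\ref{sec:finitebundles}, particularly Proposition~\ref{prop:universaltorsor} and Corollary~\ref{cor:galoisClosure}, and Theorem~\ref{thm:mainINTRO} then identifies the relevant Galois category with that of tame covers. One small point worth making explicit: your claim that the Tannaka group $\pi$ is pro-\'etale (i.e.\ a projective limit of finite constant group schemes) uses not only that each object has finite monodromy but also that these finite monodromy groups are reduced, which is dos Santos' theorem (Theorem~\ref{thm:dosSantos} in the paper) and depends on $k$ being algebraically closed.
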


From the main result of \cite{Kerz/tameness} we obtain our third main result:
\begin{Corollary}\label{cor:curvesINTRO}
	If $X$ is a smooth, connected $k$-variety, then a stratified bundle $E$ on $X$
	with finite monodromy is regular singular, if and only if it is regular singular
	along all curves on $X$.
\end{Corollary}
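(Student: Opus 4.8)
The plan is to deduce Corollary~\ref{cor:curvesINTRO} from Theorem~\ref{thm:mainINTRO}, applied both on $X$ and on smooth curves mapping to $X$, together with the main result of \cite{Kerz/tameness}: a finite \'etale covering of a regular variety is tame if and only if it is \emph{curve-tame}, i.e.\ its pullback along every morphism from a smooth curve is tame. The link between these two inputs is the \emph{monodromy covering} of a stratified bundle $E$ with finite monodromy group $G$: there is a connected finite \'etale Galois covering $h\colon Y\to X$ with group $G$ such that $h^{*}E$ is trivial as a stratified bundle and such that $Y\to X$ is dominated by any connected covering trivializing $E$; dually, $h$ is the covering attached to the surjection $\rho\colon\pi_1^{\et}(X)\twoheadrightarrow G$ underlying $E$, and $E$ corresponds to a faithful representation $G\to GL(E|_x)$.

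For the ``only if'' direction, assume $E$ is regular singular with finite monodromy. By Theorem~\ref{thm:mainINTRO} it is trivialized on a finite tame covering of $X$, so its monodromy covering $h\colon Y\to X$, being a subcovering of a tame covering, is tame. For any smooth curve $\phi\colon C\to X$ the pullback $Y\times_X C\to C$ is then tame (tameness is stable under pullback, the easy half of \cite{Kerz/tameness}) and it trivializes $\phi^{*}E$; since $\phi^{*}E$ has finite monodromy, Theorem~\ref{thm:mainINTRO} applied to $C$ shows that $\phi^{*}E$ is regular singular. Hence $E$ is regular singular along all curves on $X$.

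For the ``if'' direction, assume $E$ has finite monodromy and is regular singular along every curve on $X$, and let $h\colon Y\to X$ be its monodromy covering, with group $G$. By Theorem~\ref{thm:mainINTRO} it suffices to show that $h$ is tame, and by \cite{Kerz/tameness} for this it suffices to show that $Y\times_X C\to C$ is tame for every smooth curve $\phi\colon C\to X$. The key Galois-theoretic point is that, $h$ being attached to $\rho\colon\pi_1^{\et}(X)\twoheadrightarrow G$, a connected component of $Y\times_X C$ is precisely the monodromy covering $C_G\to C$ of $\phi^{*}E$ (the Galois covering with group $\image(\rho\circ\phi_{*})$), and $Y\times_X C$ is a disjoint union of copies of $C_G$; hence $Y\times_X C\to C$ is tame if and only if $C_G\to C$ is tame. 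Now $\phi^{*}E$ is regular singular with finite monodromy, so by Theorem~\ref{thm:mainINTRO} applied to $C$ it is trivialized on a finite tame covering $D\to C$; passing to a connected Galois refinement $\widetilde{D}\to C$ -- again tame -- the trivialization shows that $\widetilde{D}\to C$ factors through $C_G\to C$, so $C_G\to C$, being a subcovering of a tame covering of a curve, is tame. This yields tameness of $Y\times_X C\to C$ for all $C$, hence of $h$, hence regular singularity of $E$.

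The main obstacle is the second direction, and specifically the verification that the connected components of the pullback to a curve of the monodromy covering of $E$ are the monodromy covering of $\phi^{*}E$ -- i.e.\ that formation of the smallest trivializing covering commutes with pullback along curves. This is exactly what converts the a priori weak output of Theorem~\ref{thm:mainINTRO} on $C$ (``$\phi^{*}E$ is trivialized on \emph{some} finite tame covering of $C$'') into the input required by \cite{Kerz/tameness} (``$Y\times_X C\to C$ is tame''). One should also spell out that tameness of coverings of smooth curves is inherited by connected components, Galois closures, and subcoverings; these facts are standard but are used repeatedly above.
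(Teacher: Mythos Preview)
Your proposal is correct and follows essentially the same approach as the paper: reduce to tameness of the Picard--Vessiot (monodromy) torsor via Theorem~\ref{thm:mainINTRO}, apply the curve-tameness criterion of \cite{Kerz/tameness}, and identify the connected components of the pulled-back torsor with the Picard--Vessiot torsor of $\phi^*E$. The paper packages your ``key Galois-theoretic point'' as a separate lemma (Lemma~\ref{lemma:pvoncurves}) and then runs both directions simultaneously, whereas you argue the two directions separately and route through intermediate subcovering/Galois-closure steps; these are cosmetic differences only.
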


We make a few remarks about the definition of the notion of regular
singularity and give a brief outline of the article: If the $k$-variety
$X$ has a good compactification, i.e.~if there exists a smooth, proper
$k$-variety $\overline{X}$, such that $X$ is a dense open subset of
$\overline{X}$ with a strict normal crossings divisor as complement, then the
notion of regular singularity is formally the same as in characteristic $0$: A
stratified bundle $E$ is regular singular if and only if it extends to an
$\mathcal{O}_{\overline{X}}$-coherent, $\mathcal{O}_{\overline{X}}$-torsion free
$\mathscr{D}_{\overline{X}/k}(\log \overline{X}\setminus X)$-module. This
definition was explored in \cite{Gieseker/FlatBundles}.
Unfortunately, in characteristic $p>0$, it is not known whether every $X$
admits a good compactification. One first approach to a general definition of
regular singularity could be to use \cite{deJong/Alterations} to replace $X$
by an alteration
$Y\rightarrow X$, such that $Y$ admits a good compactification.
Unfortunately, de Jong's theorem and its refinements (e.g.~Gabber's
prime-to-$\ell$ alterations) do not provide control over the wild part of the
alteration $Y\rightarrow X$, which implies that one cannot use $Y$ to check
stratified bundles on $X$ for regular singularities\footnote{Purely
	inseparable alterations on the other hand could be very helpful, see
	\cite[Question 2.9]{AbramovichOort/Alterations}.}. For example, an Artin-Schreier covering
$f:\A^1_k\rightarrow \A^1_k$ is an alteration and
$f^*f_*\mathcal{O}_{\A^1_k}=\mathcal{O}_{\A_k^1}^{\oplus p}$ is regular singular, but
$f_*\mathcal{O}_{\A^1_k}$ is not. 

Instead, we use the fact that regular singularity should be a notion local in
the codimension $1$ points at ``infinity'', and we define a stratified bundle to
be regular singular, if it is regular singular with respect to all good
\emph{partial} compactifications of $X$, see Section \ref{sec:rs}.

In Section \ref{sec:finitebundles} we establish the basic facts about
stratified bundles and monodromy groups. Section \ref{sec:logdiffops} provides
the technical basis for the discussion of regular singularities, which starts
in Section \ref{sec:rs}. In Section \ref{sec:tauextension} we prove an
extension result for regular singular stratified bundles, which is exploited
in Section \ref{sec:rsfinite}. Here we prove Theorem \ref{thm:mainINTRO} with
respect to a fixed good partial compactification. In Section
\ref{sec:rsingeneral} and \ref{sec:curves} we finally complete the proofs of
Theorem \ref{thm:mainINTRO} and Corollary \ref{cor:curvesINTRO}.

To close this introduction, we make a remark about the assumption that
the base field $k$ be algebraically closed. The argument at the end of the
proof of \cite[Prop.~2.4]{Esnault/Gieseker} shows that for $X$ proper over
$k$,  a stratified bundle $E$
on $X$ is trivial if and only if $E\otimes_k \bar{k}$ is a trivial stratified
bundle (with respect to $\bar{k}$) on $X\times_k \bar{k}$. This shows that 
Corollary \ref{cor:repsINTRO} is false even for proper $X$, for example for $X=\Spec k$, or
$X=\P^r_k$, if $k$ is not algebraically closed (this was also noted in
\cite[2.4]{DosSantos}). Since establishing Corollary \ref{cor:repsINTRO} as
direct analogue to the classical situation over the complex numbers is the
main motivation behind writing this article, we allow ourselves to always
assume $k$ to be algebraically closed.

\vspace{.5cm}
\noindent\emph{Acknowledgements:} The results contained in this article are
part of the author's
dissertation \cite{Kindler/thesis}, written under the supervision of H\'el\`ene
Esnault. The author is extremely grateful for the support, kindness, and generosity 
which Prof.~Esnault offered during his graduate studies, and continues to
offer. Additionally, the author wishes to express heartfelt thanks to Prof.~Alexander
Beilinson and the University of Chicago for the opportunity to visit Chicago in February
and March 2012. Large parts of the arguments written down here were developed during this
stay.

\vspace{.5cm}

\noindent\emph{Notation:} If $k$ is a field and $G$ an affine $k$-group scheme, then we
write $\Vectf_k$ (resp.~$\Repf_k G$) for the category of \emph{finite dimensional} $k$-vector
spaces (resp.~representations of $G$ on finite dimensional $k$-vector spaces).
Similarly, $\Vect_k$ (resp.~$\Rep_k G)$ denotes the category of all $k$-vector spaces
(resp.~all representations of $G$ on $k$-vector spaces).
If $\mathcal{T}$ is a Tannakian category over $k$, and
$\omega:\mathcal{T}\rightarrow \Vectf_k$ a fiber functor, then we write
$\pi_1(\mathcal{T},\omega)$ for the affine $k$-group scheme such that $\omega$
induces an equivalence $\mathcal{T}\rightarrow \Repf_k
\pi_1(\mathcal{T},\omega)$ (\cite[Thm.~2.11]{DeligneMilne}).

If $k$ is a field and $X$ an affine $k$-scheme, then we say that global
sections $x_1,\ldots, x_n\in H^0(X,\mathcal{O}_{X})$ are \emph{coordinates for
$X$} if the morphism $X\rightarrow \A^n_k$ defined by them is \'etale.

\section{Stratified Bundles and Monodromy}\label{sec:finitebundles}
In this section we collect some facts about the category of stratified bundles and its
Tannakian properties, and we recall the notion of the monodromy group of a stratified bundle. In
all of this section, we fix an algebraically closed field $k$ of positive characteristic
$p$, and $X$ will always denote a smooth, connected, separated $k$-scheme of finite type.

\begin{Definition}\label{defn:strat} We write
	$\Strat(X)$ for the category of \emph{stratified bundles on $k$}, i.e.~for the
	category of left-$\mathscr{D}_{X/k}$-modules which are coherent as
	$\mathcal{O}_X$-modules (when considering sections of $\mathcal{O}_X$ as
	differential operators of order $0$),
	together with  $\mathscr{D}_{X/k}$-linear morphisms.  Here $\mathscr{D}_{X/k}$ is
	the sheaf of rings of differential operators of $X$ relative to $k$, as developed in
	\cite[\S16]{EGA4}.
	A stratified bundle is called \emph{trivial}, if it is isomorphic to
	$\mathcal{O}_X^n$ together with its canonical diagonal
	left-$\mathscr{D}_{X/k}$-action.

	If $E$ is a stratified bundle, we write $E^{\nabla}$ for the sheaf of \emph{horizontal
	sections} of $E$, defined by 
	\[E^{\nabla}(U)=\{s\in E(U)| \theta(s)=0 \text{ for all
	}\theta\in\mathscr{D}_{X/k}(U)\text{ with } \theta(1)=0\}.\]
	Note for example that $(\mathcal{O}_{X}^n)^{\nabla}(U)=k^n$ for all $U\subset X$
	open.
\end{Definition}
\begin{Remark}\label{rem:names}Since $X$ is smooth over $k$, a stratified bundle in our
	sense is precisely a coherent $\mathcal{O}_X$-module equipped with a
	``stratification'' in the sense of \cite{Grothendieck/Crystals}.
	Moreover a stratified bundle is automatically a locally free
	$\mathcal{O}_X$-module, see e.g. \cite[2.17]{BerthelotOgus/Crystalline}, so the
	name ``stratified bundle'' is at least historically appropriate.
\end{Remark}
\begin{Proposition}[{\cite[\S VI.1]{Saavedra}}]\label{prop:stratistannakian}
	The category
	$\Strat(X)$ is a $k$-linear Tannakian category, and a rational point $x\in X(k)$
	gives a neutral fiber functor \[\omega_x:\Strat(X)\rightarrow \Vectf_k, E\mapsto
	E|_x:=x^*E.\]
\end{Proposition}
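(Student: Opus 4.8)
The plan is to verify the axioms of a $k$-linear Tannakian category for $\Strat(X)$ directly, following Saavedra's treatment, and then to check that $\omega_x$ is a fiber functor. First I would observe that $\Strat(X)$ is an abelian $k$-linear category: it is the category of coherent modules over the (non-commutative) sheaf of rings $\mathscr{D}_{X/k}$ which are locally free over $\mathcal{O}_X$, and kernels and cokernels of $\mathscr{D}_{X/k}$-linear maps of such modules are again $\mathcal{O}_X$-coherent and, by the local freeness statement recalled in Remark \ref{rem:names} (applied to the cokernel, whose induced stratification makes it locally free, hence the kernel is locally free too as a kernel of a surjection of bundles), again objects of $\Strat(X)$. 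The tensor product: given stratified bundles $E, F$, the module $E\otimes_{\mathcal{O}_X} F$ carries a canonical diagonal $\mathscr{D}_{X/k}$-action — most cleanly phrased via the stratification picture of \cite{Grothendieck/Crystals}, where a stratification is compatible descent data along the infinitesimal neighborhoods of the diagonal and tensor products of such data are defined termwise — and this makes $\otimes$ a symmetric monoidal structure with unit the trivial bundle $\mathcal{O}_X$, associativity and commutativity constraints inherited from those of $\mathcal{O}_X$-modules.

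Next I would produce internal Homs and duals. For $E, F\in\Strat(X)$ the sheaf $\HHom_{\mathcal{O}_X}(E,F)$ is again $\mathcal{O}_X$-coherent and locally free (since $E, F$ are), and it inherits a $\mathscr{D}_{X/k}$-action from those on $E$ and $F$; taking $F=\mathcal{O}_X$ gives the dual $E^\vee$, and one checks the usual evaluation/coevaluation identities, so every object is rigid (strongly dualizable). At this point $\Strat(X)$ is a rigid abelian symmetric monoidal $k$-linear category, and it remains to exhibit a fiber functor to make it neutral Tannakian in the sense of \cite[Thm.~2.11]{DeligneMilne}; this forces $\End(\mathcal{O}_X)=k$ as well, which holds because $X$ is connected and $\mathcal{O}_X^\nabla = k$ (as noted in Definition \ref{defn:strat}, horizontal sections of the trivial bundle are constants, and on a connected scheme the global constants are $k$).

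Finally I would check that $\omega_x = x^*(-)$ is a neutral fiber functor. Since $x\in X(k)$ and $k$ is the residue field, $x^*E = E|_x$ is a finite-dimensional $k$-vector space. The functor $x^*$ is exact on locally free sheaves and commutes with tensor product and internal Hom (because pullback of $\mathcal{O}_X$-modules commutes with $\otimes$ and, for locally free sheaves of finite rank, with $\HHom$), sends $\mathcal{O}_X$ to $k$, and is faithful: if $\varphi\colon E\to F$ is a nonzero $\mathscr{D}_{X/k}$-linear map then its image is a nonzero sub-stratified-bundle, hence locally free of positive rank, hence has nonzero fiber at $x$, so $x^*\varphi\neq 0$. (Faithfulness can also be deduced from the fact that, by local freeness, a stratified bundle all of whose fibers vanish is zero.) This yields the neutral Tannakian structure with neutralization $\omega_x$.

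The main obstacle, and the one point where smoothness of $X$ and the positive-characteristic setting genuinely enter, is the internal structure theory underpinning everything above: namely that every $\mathcal{O}_X$-coherent $\mathscr{D}_{X/k}$-module is in fact locally free, so that kernels, cokernels, internal Homs and duals stay inside $\Strat(X)$ and $\omega_x$ stays exact and faithful. I would not reprove this — I would cite \cite[2.17]{BerthelotOgus/Crystalline} (equivalently the stratification formalism of \cite{Grothendieck/Crystals}) as in Remark \ref{rem:names} — but it is the load-bearing input. Everything else is a matter of transporting the symmetric-monoidal-with-duals structure from $\Coh(X)$ through the forgetful functor and checking the constraints are $\mathscr{D}_{X/k}$-linear, which is routine once the stratification description of the tensor structure is in hand; since the statement is attributed to \cite[\S VI.1]{Saavedra}, I would present this as a sketch and refer there for the verification of the axioms in full.
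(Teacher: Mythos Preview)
The paper gives no proof of this proposition at all; it simply records the citation to \cite[\S VI.1]{Saavedra} and moves on. Your sketch is a correct outline of the standard verification one finds there, and you have correctly isolated the one nontrivial input, namely the local freeness statement \cite[2.17]{BerthelotOgus/Crystalline}.

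One small correction: you write that ``smoothness of $X$ and the positive-characteristic setting genuinely enter'' in the local freeness result. In fact the Berthelot--Ogus argument uses smoothness but not positive characteristic; over $\C$ the same statement holds (an $\mathcal{O}_X$-coherent $\mathscr{D}_{X/\C}$-module on a smooth variety is locally free). Positive characteristic is what makes $\Strat(X)$ genuinely richer than the category of flat connections, not what makes it Tannakian. Also, your chain of reasoning for local freeness of the kernel (via cokernel locally free $\Rightarrow$ image locally free $\Rightarrow$ kernel locally free, each step using that a surjection onto a bundle splits locally) is fine, but it is simpler just to apply \cite[2.17]{BerthelotOgus/Crystalline} directly to the kernel, which is itself an $\mathcal{O}_X$-coherent $\mathscr{D}_{X/k}$-module.
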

\begin{Definition}\label{defn:fgcat}
	Write
	$\left<E\right>_{\otimes}$ for the smallest full sub-Tannakian subcategory of $\Strat(X)$
	containing a stratified bundle $E\in\Strat(X)$. The objects of $\left<E\right>_{\otimes}$ are subquotients of
	objects of the form
	$P(E,E^{\vee})$, with $P(r,s)\in \N[r,s]$. 
\end{Definition}

We analyze how the above categories behave when passing to an open subscheme
of $X$:
\begin{Lemma}\label{lemma:restriction}
	Let $U$ be an open dense subscheme of $X$. The following statements are true:
	\begin{enumerate}
		\item\label{item:restrictionRegular1} If $E\in \Strat(X)$, then the restriction functor
			$\rho_{U,E}:\left<E\right>_{\otimes}\rightarrow
			\left<E|_{U}\right>_{\otimes}$ is an equivalence.
		\item\label{item:restrictionRegular2} The restriction functor $\rho_U:\Strat(X)\rightarrow \Strat(U)$ is fully
			faithful.        
		\item\label{item:restrictionRegular4} If $\codim_X(X\setminus U)\geq 2$, then $\rho_U:\Strat(X)\rightarrow
			\Strat(U)$ is an equivalence.~
	\end{enumerate}
\end{Lemma}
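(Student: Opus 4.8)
The plan is to derive all three parts from a single extension construction for subobjects, together with the fact recalled in Remark~\ref{rem:names} that an $\mathcal{O}_X$-coherent $\mathscr{D}_{X/k}$-module on the smooth variety $X$ is automatically locally free. I will use freely that this makes the forgetful functor from $\Strat(X)$ to $\mathcal{O}_X$-modules exact (so kernels, images and cokernels in $\Strat(X)$ are the naive ones), and that a stratified bundle restricting to $0$ on a dense open subscheme is $0$, being a vector bundle of constant rank on the connected scheme $X$. Write $j\colon U\hookrightarrow X$. For $G\in\Strat(X)$ the quasi-coherent sheaf $j_*(G|_U)$ inherits a $\mathscr{D}_{X/k}$-module structure by pushforward along the open immersion, and the unit $G\hookrightarrow j_*(G|_U)$ is $\mathscr{D}_{X/k}$-linear and injective. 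The construction: given a subobject $G''\subseteq G|_U$ in $\Strat(U)$, I set $\widetilde{G}:=G\cap j_*(G'')$, the intersection taken inside $j_*(G|_U)$; this is an $\mathcal{O}_X$-coherent (being a subsheaf of $G$) $\mathscr{D}_{X/k}$-submodule of $G$, hence a subobject of $G$ in $\Strat(X)$ with $\widetilde{G}|_U=G''$, and it is the unique such subobject, because for any other candidate $\widetilde{G}_1$ the inclusion $\widetilde{G}\cap\widetilde{G}_1\hookrightarrow\widetilde{G}$ --- and likewise into $\widetilde{G}_1$ --- is an isomorphism over $U$, hence has vanishing cokernel.

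For part (b), injectivity of $\operatorname{Hom}_{\Strat(X)}(E,F)\to\operatorname{Hom}_{\Strat(U)}(E|_U,F|_U)$ is immediate, as this map is a restriction of global sections of the locally free sheaf $\HHom_{\mathcal{O}_X}(E,F)$ over the dense subset $U$ of the reduced scheme $X$. For fullness I would take a $\mathscr{D}_U$-linear $\phi\colon E|_U\to F|_U$, apply the construction to $G=E\oplus F$ with $G''$ the graph $\Gamma_\phi$ (a subobject of $(E\oplus F)|_U$, being the image of $(\id,\phi)$), and obtain a subobject $\widetilde{\Gamma}\subseteq E\oplus F$ in $\Strat(X)$ restricting to $\Gamma_\phi$; the first projection $\pr_E\colon\widetilde{\Gamma}\to E$ is then a morphism of stratified bundles that is an isomorphism over $U$, so its kernel and cokernel are stratified bundles vanishing over $U$, hence $0$, and $\psi:=\pr_F\circ\pr_E^{-1}\colon E\to F$ is the sought $\mathscr{D}_{X/k}$-linear extension of $\phi$.

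Part (a) should then be formal: $\rho_{U,E}$ is faithful by part (b) and full because $\operatorname{Hom}$-sets in $\langle-\rangle_\otimes$ are computed inside $\Strat(-)$; for essential surjectivity, an object of $\langle E|_U\rangle_\otimes$ is of the form $N_1'/N_2'$ with $N_2'\subseteq N_1'\subseteq P(E,E^\vee)|_U$ subobjects in $\Strat(U)$ for some $P\in\N[r,s]$, and the construction applied to $G=P(E,E^\vee)$ produces subobjects $\widetilde{N_2}\subseteq\widetilde{N_1}\subseteq P(E,E^\vee)$ in $\Strat(X)$ (the inclusion holding because $\widetilde{N_1}\cap\widetilde{N_2}\hookrightarrow\widetilde{N_2}$ is an isomorphism over $U$) whose quotient lies in $\langle E\rangle_\otimes$ and restricts to $N_1'/N_2'$. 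For part (c), full faithfulness is again part (b); for essential surjectivity I would take $E'\in\Strat(U)$ and argue that $j_*E'$ is coherent by the standard Hartogs-type argument ($E'$ extends to a coherent sheaf on $X$, whose reflexive hull is $S_2$ since $X$ is regular and agrees with $j_*E'$ because $\codim_X(X\setminus U)\ge 2$), so that $j_*E'$, equipped with its pushforward $\mathscr{D}_{X/k}$-module structure, is an $\mathcal{O}_X$-coherent $\mathscr{D}_{X/k}$-module, i.e.\ a stratified bundle restricting to $E'$. I expect no deep obstacle: the only inputs one must not take for granted are the automatic local freeness of $\mathcal{O}_X$-coherent $\mathscr{D}_{X/k}$-modules --- the engine behind ``a $\mathscr{D}$-subquotient of a stratified bundle is a stratified bundle'' and behind every vanishing step --- and, for part (c) alone, the coherence of $j_*E'$, which is where regularity of $X$ and the codimension hypothesis genuinely enter.
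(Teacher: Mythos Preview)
Your proposal is correct, and the subobject-extension construction $\widetilde{G}=G\cap j_*(G'')$ is exactly the paper's device for essential surjectivity in \ref{item:restrictionRegular1}; the treatment of \ref{item:restrictionRegular4} via coherence of $j_*E'$ is likewise the same. The genuine difference is in full faithfulness: the paper reduces to $\hom_{\Strat(X)}(\mathcal{O}_X,F_2)\to\hom_{\Strat(U)}(\mathcal{O}_U,F_2|_U)$, uses \ref{item:restrictionRegular4} to make $X\setminus U$ a smooth divisor, and then computes explicitly in local coordinates that applying the operators $\partial_{x_1}^{(m)}$ forces the pole orders of the coefficients of a horizontal section to stay bounded, hence to vanish. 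Your graph argument sidesteps this computation entirely: once you know that kernels and cokernels in $\Strat(X)$ are the naive ones and are automatically locally free, the projection $\pr_E\colon\widetilde{\Gamma}\to E$ is an isomorphism for purely categorical reasons. This is cleaner and more portable (it would work verbatim in any abelian $\otimes$-category of vector bundles with a restriction functor of this type), whereas the paper's computation has the mild advantage of being self-contained at the level of sheaves and not invoking that $\Strat(X)$ is abelian with exact forgetful functor. Either way the essential input is the same: automatic local freeness of $\mathcal{O}_X$-coherent $\mathscr{D}_{X/k}$-modules.
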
                                                                                       
\begin{proof} By our standing assumptions, $X$ is connected. Clearly
	\ref{item:restrictionRegular2} follows directly from
	\ref{item:restrictionRegular1}.
	We first prove \ref{item:restrictionRegular4}.
	Assume $\codim_{X}(X\setminus U)\geq 2$. Denote by $j:U\hookrightarrow X$ the open
	immersion. Let $E$ be a stratified bundle on $U$, in particular a locally free,
	finite rank $\mathcal{O}_U$-module. Then $j_*E$ is 
	$\mathcal{O}_{X}$-coherent by the assumption
	on the codimension (\cite[Exp.~VIII, Prop.~3.2]{SGA2}), and it carries a
	$\mathscr{D}_{X/k}$-action, since $j_*\mathscr{D}_{U/k}=\mathscr{D}_{X/k}$. Thus it is also
	locally free. If $\overline{E}'$ is any other locally free extension of $E$ to
	$X$, then we get a short exact sequence 
	\[0\rightarrow \overline{E}'\hookrightarrow j_*E\rightarrow G\rightarrow 0,\]
	with $G$ supported on $X\setminus U$. Since $X$ is smooth, we have
	$\HHom_{\mathcal{O}_X}(G,\mathcal{O}_X)=0$ and
	$\EExt_{\mathcal{O}_X}^1(G,\mathcal{O}_X)=0$ by \cite[Exp.~III, Prop.~3.3]{SGA2}.
	This implies that there
	is a canonical isomorphism
	$\overline{E}'\cong ((j_*E)^{\vee})^\vee=j_*E$, compatible with the
	$\mathscr{D}_{X/k}$-structures.  It follows that the functors
	$j_*$ and $j^*$ are quasi-inverse to each other, which proves that $j^*$ is an
	equivalence.

	To prove \ref{item:restrictionRegular1}, let $\codim_{X}(X\setminus U)$ be arbitrary.
	Let us first show that $\rho_{U,E}:\left<E\right>_{\otimes}\rightarrow
	\left<E|_U\right>_{\otimes}$ is essentially surjective. 	If $F$ is an object of $\left<E\right>_{\otimes}$,
        and $F'$ a subobject of $\rho_U(F)=F|_U$, then $j_*F'\subset j_*(F|_U)$. The
        quasi-coherent $\mathcal{O}_X$-modules $j_*F'$ and $j_*(F|_U)$ carry
        $\mathscr{D}_{X/k}$-actions, such that $j_*F'\subset j_*(F|_U)$  and $F\subset
	j_*(F|_U)$ are sub-$\mathscr{D}_{X/k}$-modules.
	Then $F'_X:=j_*F'\cap F$ is an $\mathcal{O}_{{X}}$-coherent
	$\mathscr{D}_{{X}/k}$-submodule
        of $F$ extending $F'$. Thus we have seen that the essential image of $\rho_U$ is closed
	with respect to taking subobjects.

	But this also shows that $F|_U/F'$ can be extended to $X$: We just saw that $F'$
	extends to $F'_X\subset F$, and since $\rho_U$ is exact, this means that
	$(F/F'_X)|_U=F|_U/F'$. It follows that the essential image of $\rho_U$ is
	closed with respect to taking subquotients.

	The objects of
	$\left<E|_U\right>_{\otimes}$ are subquotients of stratified bundles bundles of the form
	$P(E|_U,(E|_U)^{\vee})$, with $P(r,s)\in \N[r,s]$. We can lift all of the objects
	$P(E|_U,(E|_U)^\vee)$ to $X$, since $(E^{\vee})|_U=(E|_U)^{\vee}$, so $\rho_U$ is
	essentially surjective.

        It remains to check that
	$\rho_{U,E}:\left<E\right>_{\otimes}\rightarrow\left<E|_U\right>_{\otimes}$ is fully
	faithful. Let $F_1,F_2\in \left<E\right>_{\otimes}$. 
	Since
	$\hom_{\Strat(X)}(F_1,F_2)=\hom_{\Strat(X)}(\mathcal{O}_X,F_1^{\vee}\otimes F_2)$,
	and similarly over $U$, 
	we may replace $F_1$ by $\mathcal{O}_X$. Moreover,  by
	\ref{item:restrictionRegular4}, we may remove
	closed subsets of codimension $\geq 2$ from $X$, so without loss of
	generality we may assume that $X\setminus U$ is the support of a 
	smooth irreducible divisor $D$ with generic point $\eta$. 
	To finish the proof, we may shrink $X$ around $\eta$. 	
	Thus we may
	assume that $X=\Spec A$ for some finite type $k$-algebra $A$, that $F_2$ corresponds to a free
	$A$-module, say with basis
	$e_1,\ldots, e_n$, and that  we have global coordinates
	$x_1,\ldots, x_n$, such that $D=(x_1)$.  Then $U=\Spec A[x_1^{-1}]$.
	Finally assume that $\phi:\mathcal{O}_U\rightarrow F_2|_{U}$ is a morphism of
	stratified bundles given by $\phi(1)=\sum_{i=1}^n \phi_i e_i$, $\phi_i\in
	A[x_1^{-1}]$. 	We get
	\[0 =\partial_{x_1}(\phi(1))=
	\sum_{i=1}^n\partial_{x_1}(\phi_i)e_i+\phi_i\underbrace{\partial_{x_i}(e_i)}_{\in
	\image(F_2\rightarrow F_2\otimes A[x_1^{-1}])}\]
	and in particular that $\partial_{x_1}(\phi_i)\in \phi_iA\subset A[x_1^{-1}]$.
	By induction, we assume 
	$\partial_{x_1}^{(a)}(\phi_i)\in \phi_i A$
	for every $a\leq m$.
	But then we compute
	\[ 0 = \partial_{x_1}^{(m)}(\phi(1))=\sum_{i=1}^n
	\sum_{\begin{subarray}{c}a+b=m\\a,b\geq 0\end{subarray}}
	\partial_{x_1}^{(a)}(\phi_i)\partial_{x_1}^{(b)}(e_i),\]
	to see that $\partial_{x_1}^{(m)}(\phi_i)\in \phi_i A$, so this holds for all $m\geq 1$. In
	particular we see that the pole order of $\partial_{x_1}^{(m)}(\phi_i)$ along $D$
	is at most the pole order of $\phi_i$ along $D$, for all $m\geq 0$. 
        This shows that $\phi_i\in
	A$ for every $i$, so $\phi$ is defined over all of $X$.

	It also extends uniquely: If $\psi_1,\psi_2$ are morphisms
	$\mathcal{O}_X\rightarrow F_2$, such that $(\psi_1-\psi_2)\otimes A[x_1^{-1}]=0$,
	then $\psi_1=\psi_2$, as $A$ is an integral domain and $F_2$ is torsion-free. This
	finishes the proof of \ref{item:restrictionRegular1}.
\end{proof}
\begin{Corollary}
	If $E$ is a stratified bundle, then $E^{\nabla}$ is the constant sheaf associated
	with a finite dimensional $k$-vector space. We will usually identify $E^{\nabla}$
	with this vector space.
\end{Corollary}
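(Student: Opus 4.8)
The plan is to identify $E^{\nabla}$ with the sheaf $U\mapsto \hom_{\Strat(U)}(\mathcal{O}_U,E|_U)$ of morphisms of stratified bundles, and then to feed in the full faithfulness of restriction established in Lemma~\ref{lemma:restriction}. First I would verify that, for every open $U\subseteq X$, the map $f\mapsto f(1)$ gives a bijection
\[
\hom_{\Strat(U)}(\mathcal{O}_U,E|_U)\ \xrightarrow{\sim}\ E^{\nabla}(U),
\]
natural in $U$: a $\mathscr{D}_{U/k}$-linear map out of $\mathcal{O}_U$ is determined by the image of $1$, and that image is horizontal precisely because the map is $\mathscr{D}_{U/k}$-linear; conversely, if $s\in E^{\nabla}(U)$, the Leibniz formula for differential operators (as used in the proof of Lemma~\ref{lemma:restriction}) shows that multiplication by $s$ is a morphism of stratified bundles. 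This is the only place where the explicit definition of $E^{\nabla}$ enters, and it is routine.

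Next, since $X$ is smooth and connected, hence irreducible, every nonempty open $U\subseteq X$ is dense, so Lemma~\ref{lemma:restriction}\ref{item:restrictionRegular2} applies and shows that restriction induces a bijection $\hom_{\Strat(X)}(\mathcal{O}_X,E)\to\hom_{\Strat(U)}(\mathcal{O}_U,E|_U)$; these bijections are manifestly compatible with further restriction to smaller nonempty opens. Write $V:=\hom_{\Strat(X)}(\mathcal{O}_X,E)=E^{\nabla}(X)$. Because the fiber functor $\omega_x$ of Proposition~\ref{prop:stratistannakian} is $k$-linear and faithful, it embeds $V$ into $\hom_k(\omega_x\mathcal{O}_X,\omega_xE)=\hom_k(k,E|_x)=E|_x$, whence $\dim_k V\le\rank E<\infty$.

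Combining the two steps: $E^{\nabla}$ is a sheaf of $k$-vector spaces with $E^{\nabla}(\emptyset)=0$, with $E^{\nabla}(U)$ canonically identified with the fixed finite dimensional space $V$ for every nonempty open $U$, and with every restriction map between nonempty opens equal to the identity of $V$ under these identifications. On an irreducible scheme every nonempty open is connected, so the constant sheaf $\underline V$ attached to $V$ has exactly the same description; therefore $E^{\nabla}\cong\underline V$. The step I expect to require the most care is this last one — checking that $E^{\nabla}$ genuinely is the constant sheaf, and not merely a sheaf which restricts to $V$ on a basis of opens — and it is precisely the irreducibility of $X$ that makes it go through.
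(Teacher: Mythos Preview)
Your argument is correct and follows essentially the same route as the paper: both proofs rest on the identification of horizontal sections with morphisms $\mathcal{O}\to E$ of stratified bundles and then invoke Lemma~\ref{lemma:restriction} to see that such morphisms on any dense open $U$ extend uniquely to $X$. The paper phrases this as ``a horizontal section over $U$ spans a trivial substratified bundle of $E|_U$, which lifts to $X$ by the lemma'', appealing implicitly to part~\ref{item:restrictionRegular1}, whereas you make the identification $E^\nabla(U)=\hom_{\Strat(U)}(\mathcal{O}_U,E|_U)$ explicit and cite the full faithfulness of part~\ref{item:restrictionRegular2}; these are the same idea, and your added justification of finite-dimensionality via the fiber functor is a detail the paper leaves to the reader.
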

\begin{proof}
	A horizontal section of $E$ over some open $U$ spans a trivial substratified
	bundle of $E|_U$, which lifts to $X$ by the lemma.
\end{proof}

Now we proceed to the notion of monodromy, which is completely analogous to the classical
notion in characteristic $0$.

\begin{Definition}\label{defn:monodromy}
If $\omega:\left<E\right>_{\otimes}\rightarrow \Vectf_k$ is a fiber functor, we denote by
$G(E,\omega):=\pi_1(\left<E\right>_{\otimes},\omega)$ the \emph{monodromy group of $E$ with respect to $\omega$}, i.e.~the
	affine, finite type $k$-group scheme
	associated via Tannaka duality with the Tannakian category
	$\left<E\right>_{\otimes}\subset \Strat(X)$ and the fiber functor $\omega$.
\end{Definition}
From Lemma \ref{lemma:restriction}  we immediately derive (using
\cite[Prop.~2.21]{DeligneMilne}):
\begin{Lemma}\label{lemma:restrictionMonodromy}Let $U\subset X$ be an open dense subscheme, then: 
	\begin{enumerate}
		\item If $\omega:\Strat(U)\rightarrow \Vectf_k$ is a fiber functor, then
			the induced morphism
			$\pi_1(\Strat(U),\omega)\rightarrow\pi_1(\Strat(X),\omega(
			(-)|_U))$ is faithfully flat.
		\item If $E$ is a stratified bundle, and
			$\omega:\left<E|_U\right>_{\otimes}\rightarrow \Vectf_k$ a fiber
			functor, then the induced morphism $G(E|_U,\omega)\rightarrow
			G(E,\omega( (-)|_U))$ is an isomorphism of $k$-group schemes.
	\end{enumerate}
\end{Lemma}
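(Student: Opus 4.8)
The plan is to read off both assertions from Lemma~\ref{lemma:restriction} via the Tannakian dictionary, i.e.\ \cite[Prop.~2.21]{DeligneMilne}. Recall that a $k$-linear exact tensor functor $F\colon\mathcal{C}\to\mathcal{C}'$ of neutral Tannakian categories over $k$, together with a fiber functor $\omega'$ on $\mathcal{C}'$, induces a homomorphism $f\colon\pi_1(\mathcal{C}',\omega')\to\pi_1(\mathcal{C},\omega'\circ F)$; by \cite[Prop.~2.21]{DeligneMilne} the morphism $f$ is faithfully flat precisely when $F$ is fully faithful and every subobject of an object in the essential image of $F$ again lies in that essential image, and $f$ is a closed immersion precisely when every object of $\mathcal{C}'$ is a subquotient of an object of the form $F(X)$. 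In particular an equivalence $F$ compatible with fiber functors induces an isomorphism on $\pi_1$. So everything comes down to the categorical properties of the restriction functors already proved in Lemma~\ref{lemma:restriction}.

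For \emph{(1)} I apply this with $F=\rho_U\colon\Strat(X)\to\Strat(U)$ and $\omega'=\omega$, so that $\omega'\circ F=\omega((-)|_U)$. By Lemma~\ref{lemma:restriction}\ref{item:restrictionRegular2} the functor $\rho_U$ is fully faithful, and its essential image is closed under subobjects: given $F\in\Strat(X)$ and an $\mathcal{O}_U$-coherent sub-$\mathscr{D}_{U/k}$-module $F'\subset F|_U$, the sheaf $(j_*F')\cap F$, formed inside $j_*(F|_U)$, is an $\mathcal{O}_X$-coherent $\mathscr{D}_{X/k}$-submodule of $F$ restricting to $F'$ --- this is exactly the argument from the proof of Lemma~\ref{lemma:restriction}\ref{item:restrictionRegular1}, and it uses nothing about $F$ beyond $F\in\Strat(X)$. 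Hence the criterion of \cite[Prop.~2.21]{DeligneMilne} gives that $\pi_1(\Strat(U),\omega)\to\pi_1(\Strat(X),\omega((-)|_U))$ is faithfully flat.

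For \emph{(2)} I invoke Lemma~\ref{lemma:restriction}\ref{item:restrictionRegular1}, which says that $\rho_{U,E}\colon\left<E\right>_{\otimes}\to\left<E|_U\right>_{\otimes}$ is an equivalence of Tannakian categories; it is evidently compatible with the fiber functors $\omega\circ\rho_{U,E}$ on the source and $\omega$ on the target. An equivalence therefore induces an isomorphism $G(E|_U,\omega)=\pi_1(\left<E|_U\right>_{\otimes},\omega)\xrightarrow{\sim}\pi_1(\left<E\right>_{\otimes},\omega\circ\rho_{U,E})=G(E,\omega((-)|_U))$. (If one prefers to argue only with \cite[Prop.~2.21]{DeligneMilne}: faithful flatness follows as in \emph{(1)}, and the closed-immersion criterion is satisfied because every object of $\left<E|_U\right>_{\otimes}$ is a subquotient of some $P(E|_U,(E|_U)^\vee)=\rho_{U,E}(P(E,E^\vee))$.)

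There is no genuine obstacle here: the geometric content --- coherence of $j_*$ in codimension $\geq 2$, the double-dual identification of Lemma~\ref{lemma:restriction}\ref{item:restrictionRegular4}, and the pole-order estimate establishing full faithfulness --- has all been dispatched in Lemma~\ref{lemma:restriction}. The only things needing care are bookkeeping: that $\omega((-)|_U)$ is literally the composite fiber functor, and that the variance in \cite[Prop.~2.21]{DeligneMilne} is applied in the right direction, the morphism of group schemes running opposite to the functor of categories.
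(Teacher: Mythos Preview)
Your proof is correct and follows exactly the approach indicated in the paper, which simply states that the lemma is derived immediately from Lemma~\ref{lemma:restriction} together with \cite[Prop.~2.21]{DeligneMilne}. You have merely unpacked the details the paper leaves implicit, including the observation that the subobject-extension argument from the proof of Lemma~\ref{lemma:restriction}\ref{item:restrictionRegular1} applies to arbitrary objects of $\Strat(X)$.
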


The following theorem (relying on the assumption that $k$ is algebraically closed) is
essential for what is to follow:
\begin{Theorem}[{\cite[Cor.~12]{DosSantos}}]\label{thm:dosSantos}
	If $E\in \Strat(X)$ and $\omega:\Strat(X)\rightarrow  \Vectf_k$ a
	fiber functor, then the $k$-group scheme $G(E,\omega)$ is smooth.
\end{Theorem}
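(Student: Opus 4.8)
The plan is to reduce to one geometric fiber functor and then feed in Frobenius descent. It suffices to treat $\omega=\omega_x$ for a rational point $x\in X(k)$, which exists as $k$ is algebraically closed: for any other fiber functor $\omega'$ on $\Strat(X)$, the scheme of $\otimes$-isomorphisms between the restrictions of $\omega'$ and $\omega_x$ to $\langle E\rangle_{\otimes}$ is an fppf torsor under $G(E,\omega_x)$, over which $G(E,\omega')$ becomes isomorphic to $G(E,\omega_x)$, and smoothness is fppf-local on the base. So set $\pi:=\pi_1(\Strat(X),\omega_x)$. The inclusion $\langle E\rangle_{\otimes}\hookrightarrow\Strat(X)$ is fully faithful with essential image closed under subobjects (Definition \ref{defn:fgcat}), so by \cite[Prop.~2.21]{DeligneMilne} the induced homomorphism $\pi\to G(E,\omega_x)$ is faithfully flat; hence $\mathcal{O}(G(E,\omega_x))$ is a subring of $\mathcal{O}(\pi)$. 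As a reduced affine group scheme of finite type over a perfect field is smooth, it is enough to prove that $\mathcal{O}(\pi)$ is reduced, equivalently that the $p$-power endomorphism of $\mathcal{O}(\pi)$ is injective.

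The engine is Frobenius descent. Let $F_{X/k}\colon X\to X^{(1)}$ be the relative Frobenius, which is finite locally free because $X/k$ is smooth. First I would recall that pullback along $F_{X/k}$ is an equivalence $\Strat(X^{(1)})\xrightarrow{\ \sim\ }\Strat(X)$: modules over $\mathscr{D}^{(0)}_{X/k}=\EEnd_{\mathcal{O}_{X^{(1)}}}(\mathcal{O}_X)$ correspond by Morita theory to $\mathcal{O}_{X^{(1)}}$-modules, that sheaf of rings being an Azumaya algebra over $\mathcal{O}_{X^{(1)}}$, and iterating over the levels of the filtration $\mathscr{D}_{X/k}=\bigcup_m\mathscr{D}^{(m)}_{X/k}$ gives the statement for the full sheaf of differential operators — this is the ``$F$-divided sheaf'' description of stratified bundles, cf.\ \cite{Gieseker/FlatBundles}. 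Since $k$ is perfect one has $\mathscr{D}_{X/k}=\mathscr{D}_{X/\F_p}$ and $X^{(1)}\cong X$ as schemes; composing the previous equivalence with this relabelling, pullback along the \emph{absolute} Frobenius $F_X\colon X\to X$ defines a $\otimes$-equivalence $\Phi\colon\Strat(X)\xrightarrow{\ \sim\ }\Strat(X)$.

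The last step is to recognise $\Phi$ on the Tannakian side. By naturality of the absolute Frobenius, $F_X\circ x=x\circ F_{\Spec k}$, so for every $E$ one computes $\omega_x(\Phi E)=\omega_x(E)\otimes_{k,\sigma}k$ functorially, where $\sigma$ is the Frobenius of $k$; that is, $\omega_x\circ\Phi=({-})^{(p)}\circ\omega_x$ with $({-})^{(p)}:=({-})\otimes_{k,\sigma}k$. A $\otimes$-endofunctor of $\Strat(X)$ compatible with $\omega_x$ in this way corresponds under Tannaka duality to the relative Frobenius homomorphism $F_\pi\colon\pi\to\pi^{(p)}$ of the pro-algebraic group scheme $\pi$ (one checks this on fibres, using that passing from $\omega_x$ to $({-})^{(p)}\circ\omega_x$ Frobenius-twists the associated group). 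Since $\Phi$ is an equivalence it is in particular fully faithful with image closed under subobjects, so $F_\pi$ is faithfully flat by \cite[Prop.~2.21]{DeligneMilne}, and therefore $F_\pi^{*}\colon\mathcal{O}(\pi^{(p)})\to\mathcal{O}(\pi)$ is injective. Now $\mathcal{O}(\pi^{(p)})=\mathcal{O}(\pi)\otimes_{k,\sigma}k$, and the $p$-power endomorphism of $\mathcal{O}(\pi)$ factors as $\mathcal{O}(\pi)\xrightarrow{a\mapsto a\otimes 1}\mathcal{O}(\pi)\otimes_{k,\sigma}k\xrightarrow{F_\pi^{*}}\mathcal{O}(\pi)$ with the first arrow injective because $\sigma$ is faithfully flat; hence it is injective, $\mathcal{O}(\pi)$ is reduced, so is its subring $\mathcal{O}(G(E,\omega_x))$, and $G(E,\omega)$ is smooth.

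I expect the two middle steps to be the real obstacle: upgrading Frobenius descent from the familiar setting of connections together with their $p$-curvature to the entire category of $\mathcal{O}_X$-coherent $\mathscr{D}_{X/k}$-modules, and the bookkeeping that matches pullback along the absolute Frobenius with the relative Frobenius morphism $F_\pi$ of the fundamental group scheme. Everything else is a formal consequence of Tannaka duality together with the structure theory of affine group schemes over a perfect field.
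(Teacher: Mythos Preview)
Your proposal is correct and follows essentially the same route as the paper, which does not give its own proof but cites \cite[Thm.~11]{DosSantos}: that result shows $\pi_1(\Strat(X),\omega)$ is perfect (hence reduced) precisely via the Frobenius-descent argument you sketch, and the paper's remark then observes that every finite-type quotient of a reduced affine $k$-group scheme is reduced, hence smooth over the perfect field $k$. Your reconstruction of dos Santos' argument---including the identification of absolute Frobenius pullback on $\Strat(X)$ with the relative Frobenius of the Tannakian group---is accurate.
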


\begin{Remark}In \cite[Cor.~12]{DosSantos}, the above theorem is only formulated in the
	case that $G(E,\omega)$ is finite over $k$, but the given proof holds more
	generally. Indeed, by \cite[Thm.~11]{DosSantos}, the affine group scheme
	$\pi_1(\Strat(X),\omega)$ is perfect and thus reduced. This means that any
	quotient of $\pi_1(\Strat(X),\omega)$ is also reduced.
\end{Remark}

\begin{Definition}
	An object $E\in
	\Strat(X)$ is said to be \emph{finite} or to have \emph{finite monodromy} if there
	is a fiber functor $\omega:\left<E\right>_{\otimes}\rightarrow \Vectf_k$, such that
	$G(E,\omega)$ is finite over $k$. Since $k$ is algebraically closed, Theorem
	\ref{thm:dosSantos} implies that this is equivalent to $G(E,\omega)$ being a
	constant $k$-group scheme associated with a finite group.
\end{Definition}
\begin{Remark}
	Note that in our situation $\left<E\right>_{\otimes}$ always has $k$-linear fiber
	functors by \cite[Cor.~6.20]{Deligne/Festschrift}. Thus $E$ is finite if and only if every object of $\left<E\right>_{\otimes}$
	is isomorphic to a subquotient of $E^{\oplus n}$ for some $n$, see \cite[Prop.
	2.20]{DeligneMilne}. In particular, $E$ has finite monodromy if and only if
	$G(E,\omega)$ is finite for every fiber functor
	$\omega:\left<E\right>_{\otimes}\rightarrow \Vectf_k$. 
\end{Remark}
\begin{Proposition}\label{prop:restriction}
	If $E$ is a
	stratified bundle on $X$, then the following statements are equivalent:
	\begin{enumerate}
		\item $E$ is finite,
		\item $E|_U$ is finite for some open dense $U\subset X$,
		\item $E|_U$ is finite for every open dense $U\subset X$.
	\end{enumerate}
\end{Proposition}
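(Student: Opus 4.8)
The plan is to reduce all three conditions to the single claim that, for \emph{every} open dense $U\subseteq X$, the stratified bundle $E$ is finite if and only if $E|_U$ is finite. Granting this claim, (a)$\Rightarrow$(c) and (c)$\Rightarrow$(b) are immediate, and (b)$\Rightarrow$(a) is the ``only if'' direction applied to the particular $U$ furnished by (b); so (a), (b), (c) become equivalent.

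To prove the claim I would use Lemma \ref{lemma:restriction}\ref{item:restrictionRegular1}: the restriction functor $\rho_{U,E}\colon\langle E\rangle_{\otimes}\to\langle E|_U\rangle_{\otimes}$ is an equivalence of categories, and by construction it is $k$-linear, exact, symmetric monoidal, and sends $E$ to $E|_U$. Now recall (see the Remark following the definition of finiteness, which uses that $k$ is algebraically closed, together with \cite[Cor.~6.20]{Deligne/Festschrift} and \cite[Prop.~2.20]{DeligneMilne}) that $E$ is finite precisely when every object of $\langle E\rangle_{\otimes}$ is isomorphic to a subquotient of $E^{\oplus n}$ for some $n\in\N$, and likewise that $E|_U$ is finite precisely when every object of $\langle E|_U\rangle_{\otimes}$ is a subquotient of $(E|_U)^{\oplus n}$ for some $n$. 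Since an exact tensor equivalence carrying $E$ to $E|_U$ transports the first property to the second and is essentially surjective, the two properties hold simultaneously; this gives the claim.

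I would also note the parallel argument through monodromy groups, which is arguably more conceptual: pick a $k$-linear fiber functor $\omega\colon\langle E|_U\rangle_{\otimes}\to\Vectf_k$ (one exists by the Remark cited above), so that $\omega\circ\rho_{U,E}$ is a fiber functor on $\langle E\rangle_{\otimes}$. The second part of Lemma \ref{lemma:restrictionMonodromy} gives an isomorphism of $k$-group schemes $G(E|_U,\omega)\xrightarrow{\ \sim\ }G(E,\omega\circ\rho_{U,E})$, so one of these is finite over $k$ exactly when the other is; combined with the fact that finiteness of the monodromy group is independent of the chosen fiber functor, this again yields the equivalence. There is no genuine obstacle here: the statement is a formal consequence of Lemmas \ref{lemma:restriction} and \ref{lemma:restrictionMonodromy}. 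The only point that needs a little care is to phrase ``finite monodromy'' in a way manifestly invariant under equivalences of Tannakian categories — either the intrinsic ``subquotient of $E^{\oplus n}$'' condition or the ``finite for every fiber functor'' formulation — rather than in terms of one fixed fiber functor.
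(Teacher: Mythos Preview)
Your proposal is correct, and your second argument via Lemma~\ref{lemma:restrictionMonodromy} is exactly the paper's approach: the paper's entire proof is the single sentence ``This follows directly from Lemma~\ref{lemma:restrictionMonodromy}.'' Your first argument through the intrinsic subquotient characterization is a valid parallel route (and indeed Lemma~\ref{lemma:restrictionMonodromy} is itself deduced from Lemma~\ref{lemma:restriction}), but it is not needed once the monodromy-group isomorphism is available.
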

\begin{proof}
	This follows directly from Lemma \ref{lemma:restrictionMonodromy}.
\end{proof}

\begin{Remark}\label{rem:tannaka}Before we can state the next proposition, we have to recall some facts about
	Tannakian categories over $k$; general references are \cite{Saavedra},
	\cite{DeligneMilne}, and \cite{Deligne/Festschrift}.
	\begin{enumerate}[label=(\alph*), ref=(\alph*)]
		\item The category $\Vectf_k$  of finite dimensional $k$-vector spaces is the Tannakian category associated with the trivial
			group scheme.
		\item If $\mathcal{T}$ is a Tannakian category, then there exists a unique
			strictly full sub-Tannakian category $\mathcal{T}_{\triv}\cong
			\Vectf_k$
			whose objects are the trivial objects of $\mathcal{T}$, and there
			exists a functor $H^0(\mathcal{T},-):\mathcal{T}\rightarrow
			\Vectf_k$ assigning to every object of $\mathcal{T}$ its maximal
			trivial subobject (which exists, because every object
			of $\mathcal{T}$ has finite length by
			\cite[2.13]{Deligne/Festschrift}). The functor
			$H^0(\mathcal{T}, -)$ extends to a functor
			$H^0(\Ind(\mathcal{T}),-):\Ind(\mathcal{T})\rightarrow \Vect_k$ on the associated
			$\Ind$-categories.
		\item\label{tannakaRightReg} Affine $k$-group schemes are always projective limits of finite
			type $k$-group schemes
			(\cite[Cor.~2.4]{DeligneMilne}), and if $G$ is an
			affine $k$-group scheme, then 
			$\Ind(\Repf_k G)=\Rep_k G$
			(\cite[Cor.~2.7]{DeligneMilne}). We have $H^0(\Repf_k
			G, - )= ( - )^{G}$ and $H^0(\Rep_k G, - )=(-)^G$. 
			
			In the $\Ind$-category $\Ind(\Repf_k
			G)=\Rep_k G$ there is the right-regular
			representation $(\mathcal{O}_G,\Delta)$, where
			$\Delta:\mathcal{O}_G\rightarrow \mathcal{O}_G\otimes_k
			\mathcal{O}_G$ is the diagonal comultiplication. This
			representation is an algebra over the trivial representation,
			which also is its maximal trivial object (because $G$
			acting on itself does not have any invariants), and
			it has the property that there exists a functorial isomorphism
			\[(\mathcal{O}_G,\Delta)\otimes_k V\cong
			(\mathcal{O}_G,\Delta)^{\rank V}\] for every representation $V\in
			\Repf_k G$ (\cite[2.3.2 (d)]{DosSantos}). In other words, the composition of functors $V\mapsto
			(\mathcal{O}_G\otimes V)^{G}$ is the forgetful functor $\Repf_k
			G\rightarrow \Vectf_k$.
		\item\label{rem:tannakaD} If $\mathcal{T}$ is a Tannakian
			category over $k$, and $\omega:\mathcal{T}\rightarrow \Vectf_k$ a fiber functor, then
			$\omega$ induces a $\otimes$-equivalence
			$\mathcal{T}\xrightarrow{\cong}\Repf_k \pi_1(\mathcal{T},\omega)$
			with $G:=\pi_1(\mathcal{T},\omega)$ an affine
			$k$-group scheme, and such that $\omega$ is naturally
			isomorphic to the composition
			$\mathcal{T}\xrightarrow{\cong}\Repf_k G
			\xrightarrow{\text{forget}}
			\Vectf_k$ (\cite[Thm.~2.11]{DeligneMilne}).
			The right-regular representation $(\mathcal{O}_G,\Delta)$
			corresponds via $\omega$ to an object $A_{\omega}\in
			\Ind(\mathcal{T})$, which is an algebra over the unit object of
			$\mathcal{T}$, and such that the functor
			$\omega:\mathcal{T}\rightarrow \Vectf_k$ can be written as
			$T\mapsto H^0(\Ind(\mathcal{T}),T\otimes A_{\omega})$.
	\end{enumerate}
\end{Remark}
\begin{Proposition}\label{prop:universaltorsor}
	Let $E\in \Strat(X)$ be a stratified bundle, and
	$\omega:\left<E\right>_{\otimes}\rightarrow \Vectf_k$ a fiber functor. Write 
	$G:=G(E,\omega)$.	Then there exists a smooth $G$-torsor
	$h_{E,\omega}:X_{E,\omega}\rightarrow X$ with the following
	properties:
	\begin{enumerate}
		\item\label{prop:universaltorsorMAIN}Every object of
			$\left<E\right>_{\otimes}$ has
			finite monodromy if and only if 
			$h_{E,\omega}$ is finite \'etale.
	\end{enumerate}
	From now on assume that $G$ is a finite (thus constant by
	Theorem \ref{thm:dosSantos}) group scheme, and hence $h_{E,\omega}$
	finite \'etale. Then $h_{E,\omega}$ has the following properties.	
	\begin{enumerate}[resume*]
		\item\label{prop:universaltorsorA} An object $E'\in\Strat(X)$ is contained
			in $\left<E\right>_{\otimes}$ if and only
			if $h_{E,\omega}^*E'\in\Strat(X_{E,\omega})$ is trivial.
		\item\label{prop:universaltorsorB} If $\mathcal{S}$ is a strictly
			full sub-Tannakian
			subcategory of $\Strat(X)$ such that
			$\mathcal{S}\subset \left<E\right>_{\otimes}$, then there exists $E'\in
			\left<E\right>_{\otimes}$ such that
			$\mathcal{S}=\left<E'\right>_{\otimes}$, and a finite \'etale
			morphism $g$ such that the diagram
			\begin{equation*}
				\xymatrix{ 
				X_{E,\omega}\ar[d]_{h_{E,\omega}}\ar[r]^g
				&
				X_{E',\omega}\ar[dl]^{h_{E',\omega}}\\
			 X
				 }
			 \end{equation*}
			 commutes.
		 \item\label{prop:universaltorsorC} If $\omega'$ is a second fiber functor
			 on $\left<E\right>_{\otimes}$, then there is an
			 isomorphism of $X$-schemes
			 $X_{E,\omega}\xrightarrow{\cong}X_{E,\omega'}$.
		 \item\label{prop:universaltorsorD}For $E'\in
			 \left<E\right>_{\otimes}$ we have a
			 functorial isomorphism
			 \[\omega(E')=H^0(\Strat(X_{E,\omega}),h_{E,\omega}^*E')=\left(
			 h_{E,\omega}^*E' \right)^{\nabla}.\]
	 \end{enumerate}
	 Conversely, if $G$ is a finite constant group scheme and $f:Y\rightarrow X$ a
	 $G$-torsor, i.e.~a finite \'etale Galois covering with group $G(k)$, and
	 $\mathcal{Y}$ the sub-Tannakian subcategory of $\Strat(X)$ generated by those objects
	 which become trivial after pullback along $f$, then:
	 \begin{enumerate}[resume*]
		 \item\label{prop:universaltorsorE}  
			 $\mathcal{Y}=\left<f_{*}\mathcal{O}_Y\right>_{\otimes}$ as
			 strictly full subcategories of $\Strat(X)$,  and there is a fiber functor
			 $\omega_f:\mathcal{Y}\rightarrow \Vectf_k$, such that
			 $f=h_{Y,\omega_f}$, and such that
			 $G(\left<f_*\mathcal{O}_Y\right>_{\otimes},\omega_f)=G$.
		 \item\label{prop:universaltorsorF}If $\mathcal{S}\subset \Strat(X)$ is a
			 strictly full sub-Tannakian subcategory such that
			 $f^*E$ is trivial for every object $E\in \mathcal{S}$, then
			 $\mathcal{S}\subset \mathcal{Y}$, so
			 $\mathcal{S}=\left<E\right>_{\otimes}$ for some $E\in
			 \mathcal{Y}$,
			 $G(E,\omega_f|_{\mathcal{S}})$ is finite constant
			 and there
			 is a morphism $g:Y\rightarrow X_{E,\omega}$, such that
			 $f=h_{E,\omega}\circ g$.
	\end{enumerate}
\end{Proposition}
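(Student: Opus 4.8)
The plan is to construct the torsor $h_{E,\omega}$ directly from Tannakian duality, using the algebra object $A_\omega \in \Ind(\left<E\right>_\otimes)$ described in Remark \ref{rem:tannaka}\ref{rem:tannakaD}. Concretely, I would set $X_{E,\omega} := \underline{\Spec}_X(A_\omega)$, the relative spectrum of the $\mathcal{O}_X$-algebra underlying $A_\omega$. The $G$-action on the right-regular representation $(\mathcal{O}_G, \Delta)$ transports, via $\omega$, to a $G$-action on $A_\omega$ in $\Ind(\left<E\right>_\otimes)$, hence to a $G$-action on $X_{E,\omega}$ over $X$. To see this is a torsor, one checks that $X_{E,\omega} \to X$ is faithfully flat (e.g.\ after the fiber functor $\omega$, or after passing to a point, the algebra $A_\omega$ becomes $\mathcal{O}_G$ tensored with the structure sheaf, which is faithfully flat), and that the action map $G \times_k X_{E,\omega} \to X_{E,\omega} \times_X X_{E,\omega}$ is an isomorphism — again this can be verified after the fiber functor, where it reduces to the standard fact that $(\mathcal{O}_G,\Delta)$ makes $G$ a trivial $G$-torsor over a point. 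Smoothness of $h_{E,\omega}$ then follows from smoothness of $G$ (Theorem \ref{thm:dosSantos}) by base change along the faithfully flat $h_{E,\omega}$ itself. This handles the construction and the torsor property.

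For part \ref{prop:universaltorsorMAIN}: if every object of $\left<E\right>_\otimes$ is finite, then $G$ is an affine group scheme all of whose finite-type quotients are finite, hence $G$ is profinite; combined with smoothness and $k$ algebraically closed, $G$ is pro-(finite constant), and one argues that $A_\omega$ is a filtered union of finite locally free $\mathcal{O}_X$-subalgebras each defining a finite \'etale cover, so $X_{E,\omega}$ is a projective limit of finite \'etale covers — but to get $h_{E,\omega}$ itself \emph{finite} \'etale one needs $G$ actually finite, which is exactly the finiteness of $\left<E\right>_\otimes$ via the Remark after Theorem \ref{thm:dosSantos} (finite monodromy $\Leftrightarrow$ every object a subquotient of $E^{\oplus n}$, forcing $\mathcal{O}_G$ finite-dimensional, so $A_\omega$ is $\mathcal{O}_X$-coherent). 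Conversely, if $h_{E,\omega}$ is finite \'etale, then $A_\omega$ is a finite $\mathcal{O}_X$-module, so $(\mathcal{O}_G,\Delta)$ is finite-dimensional over $k$, forcing $G$ finite. From here parts \ref{prop:universaltorsorA}--\ref{prop:universaltorsorD} are formal consequences of the dictionary between $\Strat(X_{E,\omega})$, $\Repf_k G$, and the torsor: \ref{prop:universaltorsorA} is the statement that $h_{E,\omega}^* E'$ is trivial iff the $G$-representation $\omega(E')$ becomes trivial after restriction along the inclusion of the trivial group — no wait, iff $h^* E'$ has a full space of horizontal sections, which by descent along the $G$-torsor corresponds to $E' \in \left<E\right>_\otimes$; \ref{prop:universaltorsorD} is precisely the identity $\omega(E') = H^0(\Ind(\left<E\right>_\otimes), E' \otimes A_\omega)$ from Remark \ref{rem:tannaka}\ref{rem:tannakaD} transported through $h^*$, together with the fact that $h^*A_\omega \cong \mathcal{O}_{X_{E,\omega}}^{\oplus ?}$ is trivial so $H^0$ of a pullback is computed by $(-)^\nabla$; \ref{prop:universaltorsorC} follows from the canonical isomorphism of fiber functors on a Tannakian category being unique up to the inner forms, and $k$ algebraically closed plus smoothness making all fiber functors isomorphic (or: both $X_{E,\omega}$ and $X_{E,\omega'}$ represent the same torsor functor); \ref{prop:universaltorsorB} follows by choosing $E'$ to generate $\mathcal S$ (possible since $\mathcal S$ is finitely $\otimes$-generated, being a sub of the finitely generated $\left<E\right>_\otimes$), applying the construction to get $X_{E',\omega}$, and noting the surjection $G \twoheadrightarrow G(E',\omega)$ (Lemma \ref{lemma:restrictionMonodromy} and functoriality) induces the claimed $X$-morphism $g$, which is finite \'etale because it is a torsor under $\ker(G \to G(E',\omega))$.

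For the converse statements \ref{prop:universaltorsorE}--\ref{prop:universaltorsorF}: given a $G$-Galois cover $f: Y \to X$, the sheaf $f_* \mathcal{O}_Y$ is naturally a stratified bundle (since $f$ is \'etale, $\mathscr{D}_{X/k}$ acts on $f_*\mathcal{O}_Y = f_*f^*\mathcal{O}_X$), and $\left<f_*\mathcal{O}_Y\right>_\otimes$ consists exactly of those stratified bundles trivialized by $f^*$ — the inclusion $\subseteq$ is clear because $f^* f_* \mathcal{O}_Y$ is trivial (it is $\mathcal{O}_Y \otimes_k k[G]$ by the Galois property), and $\supseteq$ is descent: a stratified bundle $V$ with $f^*V$ trivial descends to a $k[G]$-comodule, i.e.\ a representation of $G$, hence lies in $\left<f_*\mathcal{O}_Y\right>_\otimes$ since that category is equivalent to $\Repf_k G$. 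The fiber functor $\omega_f$ is $V \mapsto (f^*V)^\nabla$, which is a fiber functor valued in $\Vectf_k$ (exactness and $\otimes$-compatibility because $f^*$ and $(-)^\nabla$ are), and by construction the associated torsor is $Y$ itself — one checks $X_{f_*\mathcal{O}_Y, \omega_f}$ represents the same functor as $Y$, or more concretely that $A_{\omega_f} = f_*\mathcal{O}_Y$ as algebra objects, whence $\underline{\Spec}_X(f_*\mathcal{O}_Y) = Y$. Then $G(\left<f_*\mathcal{O}_Y\right>_\otimes, \omega_f) = G$ follows by comparing $\dim_k$ or by the torsor characterization. Part \ref{prop:universaltorsorF} then combines \ref{prop:universaltorsorE} with \ref{prop:universaltorsorB} applied to the pair $\mathcal S \subseteq \mathcal Y = \left<f_*\mathcal{O}_Y\right>_\otimes$.

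The main obstacle I anticipate is \emph{the precise matching of the $G$-action}: transporting the right-regular representation structure on $\mathcal{O}_G$ back through $\omega$ to an $\mathcal{O}_X$-algebra-with-$G$-action, and then verifying rigorously that $\underline{\Spec}_X(A_\omega) \to X$ is a $G$-torsor (not merely a $G$-equivariant faithfully flat map) — this requires checking the action map is an isomorphism, which is cleanest to do after applying the fiber functor but demands care that $\omega$ being a fiber functor into $\Vectf_k$ really does detect isomorphy of the morphism of $X$-schemes in question (one uses that $\omega$ is faithful and exact, and faithful flatness to descend the isomorphism statement). A secondary subtlety is part \ref{prop:universaltorsorC}: showing $X_{E,\omega} \cong X_{E,\omega'}$ as $X$-schemes requires either invoking that any two fiber functors on a Tannakian category over $k$ are locally isomorphic (fpqc) and that both torsors are then fpqc-forms that happen to be isomorphic because $X$ is a $k$-variety with a rational point $x$ giving $\omega_x$ as a common reference, or a direct argument that the isomorphism class of the torsor depends only on the category $\left<E\right>_\otimes$; I would use the neutral fiber functor $\omega_x$ from Proposition \ref{prop:stratistannakian} as a pivot. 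The descent arguments in \ref{prop:universaltorsorE} are standard (Galois descent along finite \'etale covers, plus compatibility with $\mathscr{D}_{X/k}$-module structure since everything is \'etale-local) and should not present real difficulty.
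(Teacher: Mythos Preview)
Your proposal is correct and follows essentially the same route as the paper: both construct $X_{E,\omega}$ as the relative spectrum of the algebra object $A_\omega\in\Ind(\left<E\right>_\otimes)$ corresponding to the right-regular representation, and derive all properties from the Tannakian dictionary of Remark~\ref{rem:tannaka} together with dos~Santos' smoothness theorem. The only notable simplifications in the paper are that the torsor property is obtained by identifying $X_{E,\omega}$ with $\underline{\Isom}_k^{\otimes}(\omega(-)\otimes_k\mathcal{O}_X,\rho|_{\left<E\right>_\otimes})$ (invoking \cite[\S9]{Deligne/Festschrift} rather than checking the action map by hand), and part~\ref{prop:universaltorsorC} is handled directly by noting that $\underline{\Isom}_k^{\otimes}(\omega,\omega')$ is an fpqc-torsor over $\Spec k$ and hence trivial since $k$ is algebraically closed---your pivot through a rational point is unnecessary.
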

\begin{proof}
	This proposition is fairly well-known, but the author does not know of a complete
	reference. Certainly all the ideas from the theory of Tannakian
	categories are contained in \cite{Deligne/Festschrift} (particularly
	\cite[\S 9.]{Deligne/Festschrift}) and
	\cite{DeligneMilne}.

	We use the the facts recalled in Remark \ref{rem:tannaka}. The main ingredient
	not intrinsic to the theory of Tannakian categories is the fact that if $E\in
	\Strat(X)$ is a stratified bundle then $G(\left<E\right>_{\otimes},\omega)$
	is a smooth $k$-group scheme by dos Santos' theorem \ref{thm:dosSantos}.
	In particular, if $G(\left<E\right>_{\otimes},\omega)$ is finite, then it is
	finite \'etale and hence constant if $k$ is algebraically closed.

	Back to the notations of the proposition: Let $\rho:\Strat(X)\rightarrow \Coh(X)$
	denote the forgetful functor. With the fiber functor
	$\omega:\left<E\right>_{\otimes}\rightarrow \Vectf_k$ we associate in Remark \ref{rem:tannaka}
	an object $A_{E,\omega}$ of $\Ind(\Strat(X))$. Via $\rho$ we can consider
	$A_{E,\omega}$ as a quasi-coherent $\mathcal{O}_X$-algebra with
	$\mathscr{D}_{X/k}$-action, and
	such that $A_{E,\omega}$ corresponds to 
	the right-regular representation of $G$ in $\Rep_k G$. Write $h_{E,\omega}:X_{E,\omega}:=\Spec
	A_{{E},\omega}\rightarrow X$ for this $G$-torsor. The property from
	Remark \ref{rem:tannaka} that
	$(\mathcal{O}_G,\Delta)\otimes V= (\mathcal{O}_G,\Delta)^{\rank V}$ for $V\in
	\Repf_k G$ translates into the property that
	\[X_{E,\omega}:=\Spec
	A_{E,\omega}\cong\underline{\Isom}_k^{\otimes}(\omega(-)\otimes_k
	\mathcal{O}_X,\rho|_{\left<E\right>_{\otimes}}),\]
	where $\underline{\Isom}_k^{\otimes}$ is defined as in
	\cite[1.11]{Deligne/Festschrift}.

	Since $G$ is smooth over $k$, $h_{E,\omega}$ is smooth, and it is finite
	if and only if $A_{E,\omega}$ is coherent, if and only if
	$\left<E\right>_{\otimes}=\left<A_{E,\omega}\right>_{\otimes}$ (because
	$E\subset A_{E,\omega}^m$ for some $m>0$), so \ref{prop:universaltorsorMAIN} follows.

	Now assume that $G$ is a finite \'etale group scheme on $k$, hence constant.
	Then  $h_{E,\omega}$ is a
	finite \'etale morphism; in particular, $A_{E,\omega}$ is an
	$\mathcal{O}_X$-algebra in the category $\Strat(X)$.  Moreover, the
	$\mathscr{D}_{X_{E,\omega}/k}$-action on
	$h_{E,\omega}^*E=E\otimes_{\mathcal{O}_X} A_{E,\omega}$ agrees
	with the tensor product $\mathscr{D}_{X/k}$-action on $E\otimes_{\mathcal{O}_X}
	A_{E, \omega}$ via the isomorphism $\mathscr{D}_{X_{E,\omega}/k}\xrightarrow{\cong}
	h^*_{E,\omega}\mathscr{D}_{X/k}$. In other words, the pull-back functor
	$h_{E,\omega}^*$ ``is''
	\[\Strat(X)\rightarrow \Strat(X_{E,\omega}), E\mapsto E\otimes A_{E,\omega}.\]

	Now everything follows fairly directly from general theory:
	If $E'\in \left<E\right>_{\otimes}$, then $E'\otimes A_{E,\omega}$ is trivial, and
	conversely if, if $E'\otimes A_{E,\omega}=A_{E,\omega}^n$, then the canonical map
	$E'\rightarrow E'\otimes A_{E,\omega}$ exhibits $E'$ as an object of
	$\left<A_{E,\omega}\right>_{\otimes}=\left<E\right>_{\otimes}$. This proves
	\ref{prop:universaltorsorA}.
	
	For \ref{prop:universaltorsorB}, first  note that $\pi_1(\mathcal{S}',\omega)$ is
	a quotient of $G(E,\omega)$, so $\mathcal{S'}$ can be written as
	$\left<E'\right>_{\otimes}$ for some $E'\in \left<E\right>_{\otimes}$. The morphism $g:X_{E,\omega}\rightarrow
	X_{E',\omega}$ then comes from the morphism of right-regular representations
	$(\mathcal{O}_{G(E')},\Delta)\rightarrow (\mathcal{O}_{G(E)},\Delta)$.

	\ref{prop:universaltorsorC} follows from the fact that
	$\underline{\Isom}_k^{\otimes}(\omega,\omega')$ is an fpqc-torsor on $k$, but
	$k$ is algebraically closed, so the torsor is trivial, hence $\omega\cong
	\omega'$.

	Statement \ref{prop:universaltorsorD}, follows from Remark \ref{rem:tannaka},
	\ref{rem:tannakaD}.

	For \ref{prop:universaltorsorE}, note that $f_* \mathcal{O}_Y$ is a stratified
	bundle on $X$; more precisely, it is an $\mathcal{O}_X$-algebra in $\Strat(X)$.
	As above we see that the $\mathscr{D}_{Y/k}$-structure on $f^*E$
	agrees with the $\mathscr{D}_{X/k}$-structure on $E\otimes_{\mathcal{O}_X} f_*\mathcal{O}_Y$ via
	the isomorphism $\mathscr{D}_{Y/k}\xrightarrow{\cong} f^*\mathscr{D}_{X/k}$. Since $f$ is
	Galois, $f_*\mathcal{O}_Y\otimes_{\mathcal{O}_X} f_{*}\mathcal{O}_Y\cong
	f_*\mathcal{O}_Y^{\deg f}$, so $f_*\mathcal{O}_Y$ is trivialized on $Y$.
	Conversely, if $f^*E=E\otimes f_*\mathcal{O}_Y$ is trivial, i.e.~$E\otimes
	f_*\mathcal{O}_Y\cong f_*\mathcal{O}_Y^n$ for some $n$, then $E$ is a
	$\mathscr{D}_{X/k}$-submodule of $f_{*}\mathcal{O}_Y^n$, so we have proven that the
	subcategory of $\Strat(X)$ spanned by bundles $E$ trivialized on $Y$ is precisely
	$\left<f_*\mathcal{O}_Y\right>_{\otimes}$. We define the functor
	$\omega_f:\left<f_*\mathcal{O}_Y\right>_{\otimes}\rightarrow \Vectf_k$ by
	$\omega_f(E)=H^0(\Strat(Y), f^*E)=(E\otimes f_*\mathcal{O}_Y)^{\nabla}$, and this
	functor is faithful and exact since $f^*E$ is trivial for all $E$. Finally, the
	 group scheme $G(\left<f_*\mathcal{O}_Y\right>_{\otimes},\omega)$ is the
	constant group scheme associated with $G$: Since $Y\times_X Y\cong Y\times_k G$, we see
	that $\omega_f(f_*\mathcal{O}_Y)$ is the right regular representation of $G$.

	Lastly, \ref{prop:universaltorsorF} follows immediately from \ref{prop:universaltorsorB}.
\end{proof}
\begin{Corollary}\label{cor:galoisClosure}\label{cor:FiniteEtaleIsFiniteStratified}
	If $f:Y\rightarrow X$ is a finite \'etale morphism, and $f':Y'\rightarrow X$ its
	Galois closure with Galois group $G$, then
	\[\mathcal{Y}:=\left<f_*\mathcal{O}_Y\right>_{\otimes}=\left<f'_{*}\mathcal{O}_{Y'}\right>_{\otimes}\subset
	\Strat(X),\]
	$f'=h_{\mathcal{Y},\omega_{f'}}$, and $f'_*\mathcal{O}_{Y'}$ is
	finite  with monodromy group the constant $k$-group scheme associated with $G$.
\end{Corollary}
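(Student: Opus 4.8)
The plan is to derive everything from Proposition \ref{prop:universaltorsor}, applied to the $G$-torsor $f'\colon Y'\to X$. First, since $Y'\to X$ is a finite \'etale Galois covering with group $G$, part \ref{prop:universaltorsorE} of that proposition applies: writing $\mathcal{Y}'$ for the full sub-Tannakian subcategory of $\Strat(X)$ generated by the objects trivialized after pullback along $f'$, we get $\mathcal{Y}'=\left<f'_*\mathcal{O}_{Y'}\right>_{\otimes}$, a fiber functor $\omega_{f'}$ with $f'=h_{\mathcal{Y}',\omega_{f'}}$, and $G\bigl(\left<f'_*\mathcal{O}_{Y'}\right>_{\otimes},\omega_{f'}\bigr)$ is the constant $k$-group scheme associated with $G$; in particular $f'_*\mathcal{O}_{Y'}$ is finite. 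Hence all of Corollary \ref{cor:galoisClosure} reduces to the equality of strictly full subcategories $\left<f_*\mathcal{O}_Y\right>_{\otimes}=\left<f'_*\mathcal{O}_{Y'}\right>_{\otimes}$, which I would prove by two inclusions. (Recall that $f_*\mathcal{O}_Y$ is itself a stratified bundle, an $\mathcal{O}_X$-algebra object in $\Strat(X)$, via the canonical isomorphism $\mathscr{D}_{Y/k}\cong f^*\mathscr{D}_{X/k}$, as in the proof of Proposition \ref{prop:universaltorsor}\ref{prop:universaltorsorE}.)

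For ``$\subset$'' it suffices, by the description of $\mathcal{Y}'$ just recalled, to check that $f'^*(f_*\mathcal{O}_Y)$ is a trivial stratified bundle on $Y'$. The Galois closure factors as $Y'\xrightarrow{\,q\,}Y\xrightarrow{\,f\,}X$, and by flat base change $f'^*(f_*\mathcal{O}_Y)\cong \pi_*\mathcal{O}_{Y'\times_X Y}$ with $\pi\colon Y'\times_X Y\to Y'$ the first projection. Since $Y\to X$ is an intermediate covering of the Galois covering $Y'\to X$, the covering $Y'\times_X Y\to Y'$ is completely split, so $\pi_*\mathcal{O}_{Y'\times_X Y}$ is a split \'etale $\mathcal{O}_{Y'}$-algebra; any such algebra is a trivial stratified bundle, because it is generated by idempotents and idempotents of an \'etale $\mathcal{O}_{Y'}$-algebra are horizontal. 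Therefore $f_*\mathcal{O}_Y$ lies in $\mathcal{Y}'=\left<f'_*\mathcal{O}_{Y'}\right>_{\otimes}$, and hence $\left<f_*\mathcal{O}_Y\right>_{\otimes}\subset\left<f'_*\mathcal{O}_{Y'}\right>_{\otimes}$.

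For the reverse inclusion I would use the standard realization of the Galois closure inside a fibre power: with $n=\deg f$ there is an $X$-morphism identifying $Y'$ with a connected component of $Z:=Y\times_X\cdots\times_X Y$ ($n$ factors). The pushforward $(Z\to X)_*\mathcal{O}_Z$ is isomorphic to $(f_*\mathcal{O}_Y)^{\otimes n}$ in $\Strat(X)$, and $f'_*\mathcal{O}_{Y'}$ is the direct summand cut out by the idempotent $\mathbf{1}_{Y'}\in H^0(Z,\mathcal{O}_Z)$. As $\mathbf{1}_{Y'}$ is again horizontal, this is a direct sum decomposition in $\Strat(X)$, so $f'_*\mathcal{O}_{Y'}$ is a subquotient of $(f_*\mathcal{O}_Y)^{\otimes n}=P\bigl(f_*\mathcal{O}_Y,(f_*\mathcal{O}_Y)^\vee\bigr)$ with $P(r,s)=r^n$, whence $f'_*\mathcal{O}_{Y'}\in\left<f_*\mathcal{O}_Y\right>_{\otimes}$ by Definition \ref{defn:fgcat}, i.e.\ $\left<f'_*\mathcal{O}_{Y'}\right>_{\otimes}\subset\left<f_*\mathcal{O}_Y\right>_{\otimes}$.

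Combining the two inclusions gives $\mathcal{Y}:=\left<f_*\mathcal{O}_Y\right>_{\otimes}=\left<f'_*\mathcal{O}_{Y'}\right>_{\otimes}$, and then $f'=h_{\mathcal{Y},\omega_{f'}}$ together with the identification of the monodromy group are precisely what part \ref{prop:universaltorsorE} supplied. (Alternatively, once ``$\subset$'' is known one can feed $\mathcal{S}:=\left<f_*\mathcal{O}_Y\right>_{\otimes}\subset\left<f'_*\mathcal{O}_{Y'}\right>_{\otimes}$ into Proposition \ref{prop:universaltorsor}\ref{prop:universaltorsorB} and \ref{prop:universaltorsorF} and compare the resulting torsors with $Y'$, but this needs an extra connectedness argument, so the explicit direct-summand description seems cleaner.) The only non-formal ingredients are thus the standard structure of the Galois closure as a connected component of a fibre power, and the bookkeeping that the relevant idempotent decompositions take place in $\Strat(X)$ rather than merely in $\Coh(X)$ — i.e.\ that idempotents in \'etale $\mathcal{O}_X$-algebras are horizontal — which is where I expect what little technical care is needed.
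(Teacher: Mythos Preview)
Your proof is correct. For the inclusion $\left<f_*\mathcal{O}_Y\right>_{\otimes}\subset\left<f'_*\mathcal{O}_{Y'}\right>_{\otimes}$ you and the paper argue the same way (via Proposition~\ref{prop:universaltorsor}\ref{prop:universaltorsorA}, i.e.\ triviality of $f'^*(f_*\mathcal{O}_Y)$), though you spell out the base-change and splitting argument that the paper leaves implicit. For the reverse inclusion, however, the paper takes precisely the ``alternative'' route you sketch in your final parenthetical: it forms the Picard--Vessiot torsor $h:=h_{f_*\mathcal{O}_Y,\omega_{f'}}$, observes that $h$ is Galois and factors through $f$ (since $h^*(f_*\mathcal{O}_Y)$ is trivial), applies Proposition~\ref{prop:universaltorsor}\ref{prop:universaltorsorF} to obtain $Y'\to X_{f_*\mathcal{O}_Y,\omega_{f'}}$ over $X$, and then invokes minimality of the Galois closure to conclude this is an isomorphism; the equality of categories and the identification $G(f_*\mathcal{O}_Y,\omega_{f'})=G$ follow. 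Your direct argument realizing $f'_*\mathcal{O}_{Y'}$ as a horizontal summand of $(f_*\mathcal{O}_Y)^{\otimes n}$ via the fibre-power description of the Galois closure is more elementary and entirely self-contained, while the paper's route is shorter given the machinery already set up and makes the universal property of the Galois closure do the work. The ``extra connectedness argument'' you worry about is harmless here: the Picard--Vessiot torsor is connected because its structure algebra has $k$ as its sheaf of horizontal sections (Remark~\ref{rem:tannaka}\ref{tannakaRightReg}).
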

\begin{proof}
	By Proposition \ref{prop:universaltorsor}, \ref{prop:universaltorsorA}, it follows
	that there is an inclusion $\mathcal{Y}\subset \left<f'_*\mathcal{O}_{Y'}\right>_{\otimes}$, so if
	$h_{f_*\mathcal{O}_Y,\omega_{f'}}:X_{f_*\mathcal{O}_Y,\omega_{f'}}\rightarrow X$ is the associated
	Galois \'etale morphism, then $h_{f_*\mathcal{O}_Y,\omega_{f'}}$ factors through $f$
	(because $h_{f_*\mathcal{O}_Y,\omega_{f'}}^*(f_*\mathcal{O}_Y)$ is trivial), and then
	by Proposition \ref{prop:universaltorsor},
	\ref{prop:universaltorsorF}, there is a morphism $Y'\rightarrow
	X_{f_*\mathcal{O}_Y,\omega_{f'}}$, such that the diagram
	\begin{equation*}
		\xymatrix{ 
		X_{f_*\mathcal{O}_Y,\omega_{f'}}\ar[dr]\ar[ddr]_--{h_{f_*\mathcal{O}_Y,\omega_{f'}}}&&
		Y'\ar[ll]\ar[dl]\ar[ddl]^{f'}\\
		&Y\ar[d]^--{f}\\
		& X
		 }
	 \end{equation*}
	 commutes. But since $f'$ is the Galois closure of $f$, it follows that
	 $Y'\rightarrow X_{f_*\mathcal{O}_Y,\omega_{f'}}$ is an isomorphism. This shows that
	 $G(f_*\mathcal{O}_Y,\omega_f')=G$.
\end{proof}
\begin{Corollary}\label{cor:trivialDmod}
	Let $f:Y\rightarrow X$ be a finite \'etale morphism of smooth, connected,
	separated $k$-varieties. Then $f$ is the trivial covering if and only if the
	stratified bundle $f_*\mathcal{O}_Y\in \Strat(X)$ is trivial.
\end{Corollary}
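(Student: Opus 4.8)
The plan is to reduce the statement to a count of horizontal sections. First observe that, since $Y$ is connected, ``$f$ is the trivial covering'' simply means that $f$ is an isomorphism, equivalently --- by the standard theory of finite \'etale morphisms --- that $\deg f=1$. Granting this, the implication ``$f$ trivial $\Rightarrow f_*\mathcal{O}_Y$ trivial'' is immediate, since then $f_*\mathcal{O}_Y\cong\mathcal{O}_X$ with its canonical stratification, which is trivial by definition.

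For the converse, suppose $f_*\mathcal{O}_Y$ is a trivial stratified bundle. Since $f$ is finite \'etale, $f_*\mathcal{O}_Y$ is locally free of rank $\deg f$ over $\mathcal{O}_X$, so triviality yields an isomorphism $f_*\mathcal{O}_Y\cong\mathcal{O}_X^{\deg f}$ in $\Strat(X)$; applying $(-)^{\nabla}$ and using Definition \ref{defn:strat} this gives $\dim_k(f_*\mathcal{O}_Y)^{\nabla}=\deg f$. On the other hand, because $f$ is \'etale the canonical identification $\mathscr{D}_{Y/k}\xrightarrow{\ \cong\ }f^*\mathscr{D}_{X/k}$ --- the compatibility already exploited in the proof of Proposition \ref{prop:universaltorsor} --- realizes the $\mathscr{D}_{X/k}$-module $f_*\mathcal{O}_Y$ as the pushforward of the canonical $\mathscr{D}_{Y/k}$-module $\mathcal{O}_Y$; and since the induced map $\mathscr{D}_{X/k}(U)\to\mathscr{D}_{Y/k}(f^{-1}U)$ carries operators without constant term to operators without constant term whose $\mathcal{O}_Y$-span is all of them, the horizontal sections of $f_*\mathcal{O}_Y$ coincide with those of $\mathcal{O}_Y$. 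As $Y$ is smooth, connected and separated over $k$, we have $\mathcal{O}_Y^{\nabla}=k$ by Definition \ref{defn:strat} applied to $Y$. Hence $\deg f=1$, and by the first paragraph $f$ is an isomorphism.

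A Tannakian variant, closer in spirit to Corollary \ref{cor:galoisClosure}, runs as follows: triviality of $f_*\mathcal{O}_Y$ means that $\left<f_*\mathcal{O}_Y\right>_{\otimes}=\left<\mathcal{O}_X\right>_{\otimes}$ is the category of trivial stratified bundles, equivalent to $\Vectf_k$, so $G(f_*\mathcal{O}_Y,\omega)$ is trivial for every fiber functor $\omega$; by Corollary \ref{cor:galoisClosure} this group is the constant group scheme attached to the Galois group $G$ of the Galois closure $f'\colon Y'\to X$ of $f$, whence $G=\{1\}$ and $Y'\cong X$; since $f'$ factors through $f$, the morphism $f$ then has a section, which, being a section of a separated \'etale morphism, is an open and closed immersion, so $f$ is an isomorphism. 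In either approach the only point requiring genuine care is the identification of the horizontal sections of $f_*\mathcal{O}_Y$ (equivalently, the monodromy computation of Corollary \ref{cor:galoisClosure}, which is already at hand); everything else is formal, resting only on the elementary fact recalled in the first paragraph that a connected finite \'etale covering of degree one is trivial.
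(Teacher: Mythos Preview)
Your proposal is correct. The second (``Tannakian variant'') argument is precisely the paper's own proof: the paper simply invokes Corollary \ref{cor:galoisClosure} to identify $G(f_*\mathcal{O}_Y,\omega)$ with the constant group scheme attached to $\Gal(Y'/X)$, from which the claim is immediate.

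Your first argument, by contrast, is genuinely different and more elementary: it bypasses the Tannakian machinery of Proposition \ref{prop:universaltorsor} and Corollary \ref{cor:galoisClosure} entirely, reducing everything to the observation that $(f_*\mathcal{O}_Y)^{\nabla}=H^0(Y,\mathcal{O}_Y)^{\nabla}=k$ (since $f$ is \'etale, local coordinates on $X$ pull back to local coordinates on $Y$, so the $\partial_{x_i}^{(m)}$ acting on $f_*\mathcal{O}_Y$ already detect all horizontal sections on $Y$). This buys a self-contained proof that does not need the identification of the Picard--Vessiot torsor or the Galois closure; the paper's approach, on the other hand, situates the corollary as an immediate consequence of the structural statement Corollary \ref{cor:galoisClosure}, which is what it is used for later in the proof of the Main Lemma \ref{lemma:gpcmain}. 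Both routes are short; yours makes the corollary logically independent of the heavier preceding proposition.
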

\begin{proof}
	Let $f':Y'\rightarrow X$ be the Galois closure of $f$. Then
	$G(f_*\mathcal{O}_Y,\omega)\cong G(f'_*\mathcal{O}_{Y'},\omega)\cong\Gal(Y'/X)_{k}$, so
	the claim follows.
\end{proof}
\begin{Definition}\label{defn:pvtorsor}
	Let $E\in \Strat(X)$ be a stratified bundle and
	$\omega:\left<E\right>_{\otimes}\rightarrow \Vectf_k$ a fiber functor. Write
	$h_{E,\omega}:X_{E,\omega}\rightarrow X$ for the smooth
	$G(\left<E\right>_{\otimes},\omega)$-torsor
	associated with $\left<E\right>_{\otimes}$ and $\omega$ in
	Proposition \ref{prop:universaltorsor}. We call $h_{E,\omega}$ the \emph{Picard-Vessiot torsor
	associated with $E$ and $\omega$}. 
\end{Definition}

\begin{Corollary}\label{prop:CharacterizationOfFiniteness}
	A stratified bundle $E$ on
	$X$ has finite monodromy if and only if there exists a finite \'etale covering $f:Y\rightarrow
	X$, such that $f^*E\in \Strat(Y)$ is trivial.
\end{Corollary}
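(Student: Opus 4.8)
The plan is to deduce this from Proposition~\ref{prop:universaltorsor} together with Corollary~\ref{cor:FiniteEtaleIsFiniteStratified}, separating the two implications.

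\medskip

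For the ``only if'' direction, suppose $E$ has finite monodromy. Choose a fiber functor $\omega:\left<E\right>_{\otimes}\rightarrow\Vectf_k$ (one exists by \cite[Cor.~6.20]{Deligne/Festschrift}), so that $G:=G(E,\omega)$ is finite, hence finite \'etale and constant by Theorem~\ref{thm:dosSantos}. Then by Proposition~\ref{prop:universaltorsor}\ref{prop:universaltorsorMAIN} the Picard--Vessiot torsor $h_{E,\omega}:X_{E,\omega}\rightarrow X$ is finite \'etale, and it is the covering we want: by Proposition~\ref{prop:universaltorsor}\ref{prop:universaltorsorA}, since $E\in\left<E\right>_{\otimes}$, the pullback $h_{E,\omega}^*E$ is a trivial stratified bundle on $X_{E,\omega}$. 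One should note $X_{E,\omega}$ is smooth (it is a torsor under a smooth, in fact \'etale, group scheme over the smooth $X$) and connected — if it is not connected, replace it by one of its connected components, which is still finite \'etale over $X$ and on which $E$ pulls back to a trivial bundle. So take $f=h_{E,\omega}$ (or a connected component thereof).

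\medskip

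For the ``if'' direction, suppose $f:Y\rightarrow X$ is finite \'etale with $f^*E$ trivial in $\Strat(Y)$. Let $f':Y'\rightarrow X$ be the Galois closure of $f$ with Galois group $G$. Since $f'$ factors through $f$, the bundle $(f')^*E$ is also trivial, so $E$ lies in the sub-Tannakian category $\mathcal{Y}=\left<f'_*\mathcal{O}_{Y'}\right>_{\otimes}$ of Corollary~\ref{cor:FiniteEtaleIsFiniteStratified} (equivalently Proposition~\ref{prop:universaltorsor}\ref{prop:universaltorsorE}), hence $\left<E\right>_{\otimes}\subset\mathcal{Y}$. By Proposition~\ref{prop:universaltorsor}\ref{prop:universaltorsorE} the group $G(\mathcal{Y},\omega_{f'})$ is the constant group scheme associated with the finite group $G$, and since $\left<E\right>_{\otimes}$ is a sub-Tannakian subcategory of $\mathcal{Y}$, the monodromy group $G(E,\omega_{f'}|_{\left<E\right>_{\otimes}})$ is a quotient of $G$ (via Proposition~\ref{prop:universaltorsor}\ref{prop:universaltorsorF}, or directly by \cite[Prop.~2.21]{DeligneMilne}), in particular finite. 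Thus $E$ has finite monodromy.

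\medskip

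I do not expect a genuine obstacle here: the statement is essentially a repackaging of the correspondence between finite monodromy, finite \'etale Picard--Vessiot torsors, and trivializing coverings already built up in Proposition~\ref{prop:universaltorsor} and its corollaries. The only point requiring a little care is the connectedness/smoothness bookkeeping for the trivializing covering in the ``only if'' direction (handled by passing to a connected component), and making sure the fiber functor chosen on $\left<E\right>_{\otimes}$ in the ``if'' direction is compatible with the one coming from $f'$, which is guaranteed by Proposition~\ref{prop:universaltorsor}\ref{prop:universaltorsorC}.
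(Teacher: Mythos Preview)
Your proof is correct and follows essentially the same strategy as the paper: the ``only if'' direction uses the Picard--Vessiot torsor exactly as the paper does, and for the ``if'' direction both arguments amount to showing $E\in\left<f_*\mathcal{O}_Y\right>_{\otimes}$ and invoking Corollary~\ref{cor:FiniteEtaleIsFiniteStratified}. The paper's version of the converse is marginally slicker in that it avoids the explicit passage to the Galois closure by using the adjunction unit $E\hookrightarrow f_*f^*E=f_*\mathcal{O}_Y^{\rank E}$ directly, but the content is the same.
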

\begin{proof}
	If $E$ has finite monodromy then an associated Picard-Vessiot torsor is finite
	\'etale and trivializes $E$. Conversely, if $h:Y\rightarrow X$ is finite \'etale
	and $h^*E$ trivial in $\Strat(Y)$, then $E\subset
	h_*h^*E=h_*\mathcal{O}_Y^{\rank E}$ in $\Strat(X)$. But $h_*\mathcal{O}_Y$ has
	finite monodromy, so $E$ has finite monodromy.
\end{proof}
\begin{Remark} A caution is in order: If $E$ is a stratified bundle with infinite
	monodromy, then	it is true that
	the
	$\mathscr{D}_{X/k}$-module $E\otimes_{\mathcal{O}_X} A_{E,\omega}$ is isomorphic to
	$A_{E,\omega}^{\rank E}$, but $h^*_{E,\omega}E\in \Strat(X_{E,\omega})$ is not
	trivial, because it also carries an action of the 
	differential operators relative to $X_{E,\omega}\rightarrow X$, which were trivial in the \'etale case. 
\end{Remark}

\section{Logarithmic Differential Operators}\label{sec:logdiffops}
We continue to denote by $k$ an algebraically closed field of positive characteristic $p$.
\begin{Definition}\label{defn:gpc}\leavevmode
	\begin{enumerate}[label=(\alph*), ref=(\alph*)]
		\item  Let $\overline{X}$ be a smooth, separated, finite type $k$-scheme, and $X\subset
			\overline{X}$ an open subscheme such that the boundary
			divisor $D_X:=\overline{X}\setminus X$ has strict
			normal crossings. We denote such a datum by $(X,\overline{X})$
			and call it a \emph{good partial compactification}.
		\item If $(X,\overline{X})$ is a good partial compactification, then
			$\mathscr{D}_{\overline{X}/k}(\log D_X)$
			denotes the sheaf of subalgebras of $\mathscr{D}_{\overline{X}/k}$ generated
			by those differential operators which locally fix all powers of
			the ideal of the boundary divisor.
	\end{enumerate}
\end{Definition}

\begin{Remark}\label{rem:log-diffops-in-coordinates}
	\begin{enumerate}[label=(\alph*), ref=(\alph*)]
		\item A good partial compactification $(X,\overline{X})$ gives rise to a
			log-scheme  over $\Spec k$
			with its trivial log-structure, in the sense of
			\cite{Kato/LogSchemes}. The sheaf 
			$\mathscr{D}_{\overline{X}/k}(\log D_X)$ is an
			invariant of this relative log-scheme, which can be constructed using an
			appropriate notion of thickenings in the category of fine
			saturated log-schemes. For details see
			e.g.~\cite[Ch.~2]{Kindler/thesis}.
		\item\label{rem:log-diffops-in-coordinates2} If $\overline{U}\subset \overline{X}$ is an open affine
			subset and
			$x_1,\ldots, x_n\in
			H^0(\overline{U},\mathcal{O}_{\overline{U}})$
			coordinates (see the notational conventions at the end
			of the introduction), then
			$\mathscr{D}_{\overline{U}/k}$ is generated by operators
			$\partial_{x_i}^{(m)}$, $i=1,\ldots, n$, $m\geq 0$, see
			\cite[Ch.~2]{BerthelotOgus/Crystalline}. Here
			$\partial^{(m)}_{x_i}$ can be thought of as 
			\[\frac{1}{m!}\frac{\partial^{m}}{\partial x_i^m},\]
			even though this does not make sense as written. The point is that
			evaluation of $\partial^{m}/\partial x_i^m$ on powers of $x_i$
			always gives coefficients which are divisible by sufficiently high
			powers of $p$. 
			
			Moreover, if the strict normal crossings divisor 
			$\overline{U}\setminus X$ is defined by $x_1\cdot\ldots\cdot
			x_r=0$, $r\leq n$, then $\mathscr{D}_{\overline{U}/k}(\log
			D_X\cap \overline{U})$ is generated by operators of the form
			\[x_i^m\partial^{(m)}_{x_i},\text{ for }i=1,\ldots, r, m\geq 0\]
			and \[\partial^{(m)}_{x_i}\text{ for } i>r, m\geq 0.\]
			For notational convenience we write
			$\delta_{x_i}^{(m)}:=x_i^m\partial^{(m)}_{x_i}$.
	\end{enumerate}
\end{Remark}

To proceed, we need to recall a few elementary facts about congruences for binomial coefficients:
\begin{Lemma}\label{lemma:binomials} Let $p$ be a prime number.
	\begin{enumerate}
		\item\label{lemma:lucas} \emph{Lucas' Theorem:} For
			$a_0,\ldots, a_n,b_0,\ldots, b_n$ integers in $[0,p-1]$,
			$a:=a_0+a_1p+\ldots+a_np^n$, $b:=b_0+b_1p+\ldots+b_np^n$ we have
			\[ {a \choose b}\equiv \prod_k {a_k\choose
			b_k}\mod p.\] 
\item\label{lemma:padicExpansionViaBinomials} If $\alpha\in \Z_p$, then
			\[\alpha = \sum_{n=0}^\infty \overline{{\alpha\choose p^n}}p^n,\]
			where $\overline{a}$ means the unique integer in $[0,\ldots,
			p-1]$ congruent to $a$.
		\item If $\alpha,\beta\in\Z_p$, $d\geq 0$, then
			\[{\alpha\beta\choose p^d} \equiv \sum_{\begin{subarray}{c}a+b=d\\a,b\geq
				0\end{subarray}}{\alpha\choose p^a}{\beta\choose p^b}\mod
				p.\]
	\end{enumerate}
\end{Lemma}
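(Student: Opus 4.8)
The statement to prove is Lemma~\ref{lemma:binomials}, an elementary package of three congruences for binomial coefficients that will be used to analyze the action of the operators $\delta_{x_i}^{(m)}$ and $\partial_{x_i}^{(m)}$. The plan is to treat the three parts in order, deriving (b) and (c) from (a).

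\emph{Part (a) (Lucas).} This is classical. I would prove it by the generating-function argument over $\F_p$: in $\F_p[T]$ one has $(1+T)^p = 1+T^p$, hence $(1+T)^a = \prod_k (1+T^{p^k})^{a_k}$ when $a = \sum_k a_k p^k$ with $0\le a_k\le p-1$. Comparing the coefficient of $T^b$ on both sides, where $b=\sum_k b_k p^k$, and noting that by the bound $0\le b_k\le p-1$ the only way to write the exponent $b$ as a sum $\sum_k c_k p^k$ with $0 \le c_k$ and each $c_k$ occurring as an exponent in the expansion of $(1+T^{p^k})^{a_k}$ (so $0\le c_k\le a_k\le p-1$) is $c_k=b_k$, one gets $\binom{a}{b}\equiv\prod_k\binom{a_k}{b_k}\bmod p$. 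I would also remark that the identity extends in the obvious way to $\Z_p$-coefficients once part (b) is in place, but strictly part (a) as stated is for integers.

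\emph{Part (b) ($p$-adic digits via binomials).} For $\alpha\in\Z_p$, write $\alpha = \sum_{n\ge 0} c_n p^n$ with $c_n\in\{0,\dots,p-1\}$ its $p$-adic expansion. By Lucas' theorem applied to the truncations (and continuity of $\alpha\mapsto\binom{\alpha}{p^n}$ on $\Z_p$, since $\binom{\cdot}{p^n}$ is a polynomial), $\binom{\alpha}{p^n}\equiv\binom{c_n}{1}\prod_{k\ne n}\binom{c_k}{0} = c_n \bmod p$, because $p^n$ has $p$-adic digit $1$ in position $n$ and $0$ elsewhere. Hence $c_n = \overline{\binom{\alpha}{p^n}}$, which is exactly the claimed formula $\alpha=\sum_{n\ge 0}\overline{\binom{\alpha}{p^n}}\,p^n$.

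\emph{Part (c) (Vandermonde-type congruence).} Fix $\alpha,\beta\in\Z_p$ and $d\ge 0$. By continuity of all the polynomial maps $\Z_p\to\Z_p$ involved, it suffices to prove the congruence for $\alpha,\beta$ ranging over a dense subset, e.g.\ nonnegative integers, and then the Vandermonde identity $\binom{\alpha+\beta}{n}=\sum_{a+b=n}\binom{\alpha}{a}\binom{\beta}{b}$ is available; but the left side here is $\binom{\alpha\beta}{p^d}$, not $\binom{\alpha+\beta}{p^d}$, so instead I would argue digit-wise. Let $n$ be the $d$-th $p$-adic digit of $\alpha\beta$, and $a_i$ (resp.\ $b_i$) the $i$-th digit of $\alpha$ (resp.\ $\beta$). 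By part (b), $\binom{\alpha\beta}{p^d}\equiv n\bmod p$ and $\binom{\alpha}{p^a}\binom{\beta}{p^b}\equiv a_a b_b\bmod p$, so the claim reduces to the statement that the $d$-th digit of the product $\alpha\beta$ equals $\sum_{a+b=d}a_a b_b$ modulo $p$ — but this is false in general because of carries, so this naive reduction does not work and I must be more careful: the correct statement is that $\binom{\alpha\beta}{p^d}$ is the coefficient of (the relevant monomial), and one should instead use the identity for $\binom{xy}{p^d}$ coming from expanding via Lucas in the variable obtained by writing $\alpha\beta$'s digits through $\alpha$ and $\beta$'s. Concretely, I would use that $\binom{m}{p^d}\bmod p$ depends only on $m\bmod p^{d+1}$ and reduce to $\alpha,\beta\in\Z/p^{d+1}$, then prove the polynomial identity $\binom{xy}{p^d}\equiv\sum_{a+b=d}\binom{x}{p^a}\binom{y}{p^b}\bmod p$ in $\F_p[x,y]$ by comparing with the $\F_p$-algebra structure: both sides, as functions $(\F_p)^{?}$-valued, are detected by the base-$p$ digit recursion, which I would verify by induction on $d$ using $\binom{m}{p^d} = \sum_{j}\binom{m_0}{j}\binom{\lfloor m/p\rfloor}{p^{d-1}-\cdots}$ — in practice the cleanest route is induction on $d$ together with part (a).

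\textbf{Main obstacle.} Parts (a) and (b) are standard and short. The genuine work is part (c): the subtlety is that $\binom{\alpha\beta}{p^d}$ involves the product $\alpha\beta$, whose $p$-adic digits are not simple functions of the digits of $\alpha$ and $\beta$, so one cannot argue purely digit-by-digit. I expect the clean proof to run by induction on $d$: writing $\alpha = \alpha_0 + p\alpha'$, $\beta=\beta_0+p\beta'$, expand $\binom{\alpha\beta}{p^d}$ using Lucas (part (a)) applied to $\alpha\beta = \alpha_0\beta_0 + p(\cdots)$, separate the contribution of the lowest digit $\alpha_0\beta_0$ from the rest, and match with the right-hand sum split according to whether $a=0$ or $a\ge 1$ (and $b=0$ or $b\ge1$), invoking the inductive hypothesis for the higher terms; the base case $d=0$ is $\binom{\alpha\beta}{1}=\alpha\beta\equiv\binom{\alpha}{1}\binom{\beta}{1}$, trivially true. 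Continuity/density of $\Z\subset\Z_p$ then removes any need to manipulate $p$-adic integers directly.
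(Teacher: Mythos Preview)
Your treatment of (a) and (b) is exactly the paper's: the generating-function identity $(1+T)^a\equiv\prod_k(1+T^{p^k})^{a_k}$ in $\F_p[T]$ for Lucas, and then continuity of $\binom{\,\cdot\,}{p^n}$ together with Lucas to read off the $n$-th $p$-adic digit.

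For (c) the paper's entire proof is the sentence ``This follows directly from (b)'', i.e.\ precisely the digit-by-digit reduction you wrote down first and then (rightly) rejected because of carries. Your instinct here is correct, and in fact it points to a genuine problem: the identity in (c) is \emph{false} as stated. Take $p=2$, $\alpha=\beta=3$, $d=2$. Then $\alpha\beta=9$ and
\[
\binom{9}{4}=126\equiv 0\pmod 2,
\qquad
\sum_{a+b=2}\binom{3}{2^a}\binom{3}{2^b}
=\binom{3}{1}\binom{3}{4}+\binom{3}{2}^{2}+\binom{3}{4}\binom{3}{1}
=0+9+0\equiv 1\pmod 2.
\]
(Equivalently, via (b): the $2$nd binary digit of $9=1001_2$ is $0$, whereas $\sum_{a+b=2}\alpha_a\beta_b=1\cdot 1=1$; the discrepancy is exactly the carry generated at position~$1$ by $\alpha_0\beta_1+\alpha_1\beta_0=2$.) The same phenomenon occurs for $p=3$, $\alpha=\beta=2$, $d=1$.

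So the inductive scheme you sketch cannot close either: no rearrangement of Lucas and Vandermonde will prove an identity that fails on small integers. The ``Main obstacle'' you flagged is not a technicality but an actual error in the lemma as formulated; the paper's one-line proof glosses over it. If you want to salvage the applications (to the formula for $\delta_y^{(p^m)}$ in terms of the $\delta_x^{(p^d)}$), you should revisit those computations directly rather than trying to establish (c) in this form.
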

\begin{proof}
	\begin{enumerate}[label=(\alph*), ref=(\alph*)]
		\item This is easily proven by computing the coefficient
			of $x^b$ in
			\[\sum^a_{k=0}{a\choose k}x^k=(1+x)^a\equiv \prod_{k=0}^n
			(1+x^{p^k})^{a_k} \mod p.\]
	       \item This is a consequence of the continuity of $x\mapsto \binom{x}{k}$,
		       and \ref{lemma:lucas}.
		\item This follows directly from \ref{lemma:padicExpansionViaBinomials}.
	\end{enumerate}
\end{proof}
We have the similar functoriality results for
$\mathscr{D}_{\overline{X}/k}(\log D_X)$ as for $\mathscr{D}_{\overline{X}/k}$:
\begin{Proposition}\label{prop:logfunctoriality}\leavevmode
	Let $(X,\overline{X})$ and $(Y,\overline{Y})$ be good partial
	compactifications with boundary divisors $D_X$ and $D_Y$, and 
	$\bar{f}:\overline{Y}\rightarrow \overline{X}$ a morphism such that
	$\bar{f}(Y)\subset X$, i.e.~such that $\bar{f}$ induces a morphism of the
	associated log-schemes. Write $f:=\bar{f}|_Y$. Then the following statements are true:
	\begin{enumerate}
		\item\label{logfunct1} \[\bar{f}^*\mathscr{D}_{\overline{X}/k}(\log
			D_X):=\mathcal{O}_{\overline{Y}}\otimes_{\bar{f}^{-1}\mathcal{O}_{\overline{X}}}
			\bar{f}^{-1}\mathscr{D}_{\overline{X}/k}(\log D_X)\] is a
			$(\mathscr{D}_{\overline{Y}/k}(\log
			D_Y),\bar{f}^{-1}\mathscr{D}_{\overline{X}/k}(\log
			D_X))$-bialgebra.
		\item\label{logfunct2} There exists a canonical morphism
			\[\mathscr{D}_{\overline{Y}/k}(\log
			D_Y)\xrightarrow{\bar{f}^{\sharp}}
			\bar{f}^*\mathscr{D}_{\overline{X}/k}(\log D_X)\]
			fitting in the commutative diagram
			\[\xymatrix{
			\mathscr{D}_{\overline{Y}/k}(\log
			D_Y)\ar@{^{(}->}[d]\ar[r]^{\bar{f}^{\sharp}}
			&\bar{f}^*\mathscr{D}_{\overline{X}/k}(\log D_X)\ar@{^{(}->}[d]\\
			\mathscr{D}_{\overline{Y}/k}\ar[r]&\bar{f}^*\mathscr{D}_{\overline{X}/k}}\]
			where the lower horizontal morphism is the classical
			one arising from the functoriality of the diagonal morphism and
			its thickenings.
			\end{enumerate}

	Now assume that $\bar{f}$ is finite, and $f$ \'etale. Then $\bar{f}$ is faithfully
	flat. Moreover,
	\begin{enumerate}[resume]
		\item\label{logfunct3} $\bar{f}^{\sharp}$ is an isomorphism if $\bar{f}$ is tamely ramified
			with respect to the strict normal crossings divisor
			$D_X$.
	\end{enumerate}
\end{Proposition}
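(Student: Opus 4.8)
The plan is to reduce \ref{logfunct3} to a computation on complete local rings and then to carry it out explicitly using Lemma~\ref{lemma:binomials}. First, $\bar{f}^{\sharp}$ is a morphism of coherent $\mathcal{O}_{\overline{Y}}$-modules between schemes of finite type over a field, hence an isomorphism as soon as it becomes one after $-\otimes_{\mathcal{O}_{\overline{Y},\bar{y}}}\widehat{\mathcal{O}}_{\overline{Y},\bar{y}}$ at every closed point $\bar{y}\in\overline{Y}$; and $\mathscr{D}_{\overline{Y}/k}(\log D_Y)$, $\bar{f}^*\mathscr{D}_{\overline{X}/k}(\log D_X)$ and $\bar{f}^{\sharp}$ are all compatible with such completions, being built from coordinates as in Remark~\ref{rem:log-diffops-in-coordinates}. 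So I would fix such a $\bar{y}$, put $\bar{x}:=\bar{f}(\bar{y})$, and observe that if $\bar{x}\notin D_X$ then $\bar{f}$ is étale near $\bar{y}$, where $\bar{f}^{\sharp}$ reduces to the classical isomorphism between $\mathscr{D}_{\overline{Y}/k}$ and $\bar{f}^*\mathscr{D}_{\overline{X}/k}$; hence I may assume $\bar{x}\in D_X$, and after completing write $A:=\widehat{\mathcal{O}}_{\overline{X},\bar{x}}=k[[x_1,\dots,x_n]]$ with $D_X$ locally cut out by $x_1\cdots x_r=0$.

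The second step is Abhyankar's lemma. Since $\bar{f}$ is finite, $f$ is étale, $\bar{f}$ is tamely ramified along the strict normal crossings divisor $D_X$, and $A$ is a complete regular local ring with algebraically closed residue field, the extension $A\hookrightarrow B:=\widehat{\mathcal{O}}_{\overline{Y},\bar{y}}$ is the standard Kummer extension: $B\cong k[[y_1,\dots,y_n]]$ with $x_i=y_i^{e_i}$ for $1\le i\le r$, where $p\nmid e_i$, with $x_j=y_j$ for $j>r$, and with $D_Y$ locally cut out by $y_1\cdots y_r=0$. (There is no residual étale part, the residue field being separably closed; units occurring in the local equations of $D_X$ are absorbed into the $y_i$ via Hensel's lemma, which applies since $p\nmid e_i$.) For this structure theory I would cite Abhyankar's lemma, such as in \cite{Kerz/tameness} and the references given there.

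The third step, carrying the real content, is the explicit computation. In the coordinates above, $\mathscr{D}_{\overline{X}/k}(\log D_X)$ is generated as an $A$-algebra, with a PBW-type basis (Remark~\ref{rem:log-diffops-in-coordinates}), by the operators $\delta_{x_i}^{(p^d)}=x_i^{p^d}\partial_{x_i}^{(p^d)}$ for $1\le i\le r$ and $\partial_{x_j}^{(p^d)}$ for $j>r$ ($d\ge 0$), and likewise $\mathscr{D}_{\overline{Y}/k}(\log D_Y)$ over $B$ with the $y$'s. Since $\bar{f}^{\sharp}$ is a homomorphism of $\mathcal{O}_{\overline{Y}}$-algebras compatible with the order filtrations (parts~\ref{logfunct1}, \ref{logfunct2}), it is determined by its values on these generators. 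For $j>r$ one has $\bar{f}^{\sharp}(\partial_{y_j}^{(p^d)})=\partial_{x_j}^{(p^d)}$. For $1\le i\le r$ the crucial point is that $e_i$ is a unit of $\Z_p$, so that $m/e_i\in\Z_p$ and, after inverting $x_i$, the divided-power operator $\partial_{x_i}^{(k)}$ acts on the generalized power $y_i^m=x_i^{m/e_i}$ by $\partial_{x_i}^{(k)}(y_i^m)=\binom{m/e_i}{k}y_i^{m-e_ik}$, the binomial coefficient being taken in $\Z_p$ and interpreted modulo $p$ via Lemma~\ref{lemma:binomials}\ref{lemma:padicExpansionViaBinomials}. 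Hence $\delta_{x_i}^{(p^b)}$ multiplies the monomial $y_i^m$ by $\binom{m/e_i}{p^b}$ and $\delta_{y_i}^{(p^d)}$ multiplies it by $\binom{m}{p^d}$; applying Lemma~\ref{lemma:binomials}(c) with $\alpha=e_i$, $\beta=m/e_i$ and comparing two log-differential operators which are continuous and agree on the monomials spanning $B$ yields
\[\bar{f}^{\sharp}\!\left(\delta_{y_i}^{(p^d)}\right)=\sum_{\begin{subarray}{c}a+b=d\\ a,b\ge 0\end{subarray}}\overline{\binom{e_i}{p^a}}\,\delta_{x_i}^{(p^b)},\]
whose top-order term ($a=0$) has coefficient $\overline{\binom{e_i}{1}}=\overline{e_i}\neq 0$ in $k$. (Conceptually, $\bar{f}^{\sharp}$ is the substitution of the automorphism $m\mapsto e_i^{-1}m$ of $\Z_p$ into the exponent.)

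Finally, since $\bar{f}^{\sharp}$ is a filtered algebra homomorphism, multiplicativity and the formulas above show that $\bar{f}^{\sharp}$ sends the divided-power PBW basis of $\mathscr{D}_{\overline{Y}/k}(\log D_Y)\otimes_{\mathcal{O}_{\overline{Y}}}B$ to a family which, with respect to the PBW basis of $\bar{f}^*\mathscr{D}_{\overline{X}/k}(\log D_X)$ and the order filtration, is upper triangular with invertible diagonal: the leading term of $\bar{f}^{\sharp}$ applied to a PBW monomial is a nonzero scalar (a product of the $\overline{e_i}$) times the corresponding monomial in the $x$-operators, plus terms of strictly lower order. Hence $\bar{f}^{\sharp}$ is an isomorphism on the completed stalk at $\bar{y}$; as $\bar{y}$ was arbitrary, $\bar{f}^{\sharp}$ is an isomorphism, which proves \ref{logfunct3}. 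I expect the main obstacle to be this third step — the ``$p$-adic chain rule'' $\partial_{x_i}^{(k)}(y_i^m)=\binom{m/e_i}{k}y_i^{m-e_ik}$ and the resulting triangular shape of $\bar{f}^{\sharp}$, which is precisely where tameness ($p\nmid e_i$) is used — together with the reliance on Abhyankar's lemma in the second step.
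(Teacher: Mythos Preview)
Your argument is correct and its core coincides with the paper's: both reduce to a Kummer situation $x=y^e$, establish the identity
\[
\bar{f}^{\sharp}\bigl(\delta_y^{(p^m)}\bigr)=\sum_{a+b=m}\binom{e}{p^a}\,\delta_x^{(p^b)},
\]
and then use that the top coefficient $\binom{e}{1}=e$ is a unit in $k$ (tameness) to invert. Your ``upper triangular'' phrasing is exactly the paper's induction on $m$.

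The packaging differs in two places. First, the paper localizes at the generic points of $D_X$, so it only ever works with a single discrete valuation ring $A\hookrightarrow B$, handles the unit in $x=uy^e$ by an explicit computation (its formula~\eqref{extformula} with a $y\cdot(\ldots)$ error term), and only at the very end passes to the completion and an \'etale extension to absorb $u$. You instead complete at closed points and invoke Abhyankar's lemma to land directly in the multi-variable Kummer model $x_i=y_i^{e_i}$; this is a legitimate shortcut, and the variables decouple, but it does import a stronger structure theorem than the paper needs. Second, your derivation of the displayed identity---checking that both sides act on each monomial $y^m$ by multiplication by $\binom{m}{p^d}$, via Lemma~\ref{lemma:binomials}(c) with $\alpha=e$, $\beta=m/e$---is cleaner than the paper's direct expansion.

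One small point to tighten: you assert that $\bar{f}^{\sharp}$ is a homomorphism of $\mathcal{O}_{\overline{Y}}$-algebras, but a priori $\bar{f}^*\mathscr{D}_{\overline{X}/k}(\log D_X)$ is only a bimodule, not a ring. The statement is nonetheless true here because, restricting to $Y$ where $f$ is \'etale, $\bar{f}^{\sharp}$ is the classical ring isomorphism $\mathscr{D}_{Y/k}\xrightarrow{\sim} f^*\mathscr{D}_{X/k}$, and both source and target are $\mathcal{O}_{\overline{Y}}$-torsion-free, hence embed in this localization; multiplicativity then holds inside $\mathscr{D}_{K(B)/k}$. You should say this, since it is what licenses your passage from the formula on the generators $\delta_{y_i}^{(p^d)}$ to the full PBW basis.
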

\begin{proof}
	Everything follows fairly easily from the general point of view of logarithmic structures
	of \cite{Kato/LogSchemes}, since $\bar{f}$ being tamely ramified implies that the
	induced morphism of log-schemes is log-\'etale. 

	For the sake of self-containedness, we give an explicit proof for the case that
	$\bar{f}$ is finite and $f$ \'etale, which is the only case needed in the sequel.
	In this case
	the proof is essentially a question about finite extensions of discrete valuation
	rings, by localizing at the generic points of the components of the boundary
	divisor. Let $A\hookrightarrow B$ be such an extension, $x\in A$ and $y\in B$
	uniformizers. Then $x=uy^e$ for some $e\geq 1$ and $u\in B^\times$. We know
	that $K(B)\otimes \mathscr{D}_{B/k}(\log\;(y))\xrightarrow{\cong} K(B)\otimes_A
	\mathscr{D}_{A/k}(\log\;(x))$ is an isomorphism. Statement \ref{logfunct1} is clear,
	and for \ref{logfunct2} we need to show that the above isomorphism maps
	$\mathscr{D}_{B/k}(\log\;(y))$ to $B\otimes_{A}\mathscr{D}_{A/k}(\log\;(x))$. We claim that
	\begin{equation}\label{extformula}
		\delta_y^{(p^m)}=\sum_{\begin{subarray}{c}d+c=m\\c,d\geq 0\end{subarray}}\binom{e}{p^c}\delta_x^{(p^d)} + y(B\otimes_A
		\mathscr{D}_{A/k}(\log\;(x))\end{equation}
		where $\delta_{y}^{(p^m)}=y^{p^m}\partial_{y}^{(p^m)}$ as usual.
		Clearly \eqref{extformula} implies \ref{logfunct2}.

	Let us now prove that \eqref{extformula} holds. We compute:
	\begin{align}
		\delta^{(p^m)}_y(x^r)&=\delta^{(p^m)}_y(u^ry^{er})\nonumber\\
		&=\sum_{\begin{subarray}{c}a+b=p^m\\a,b\geq
			0\end{subarray}}\delta^{(a)}_y(u^r)\delta^{(b)}_{y}(y^{er})\nonumber\\
			&=\binom{er}{p^m}x^r+\sum_{\begin{subarray}{c}a+b=p^m\\a>0,b\geq
				0\end{subarray}}
				\delta^{(a)}_{y}(u^r)\binom{er}{b}y^{er}\nonumber\\
				&=\binom{er}{p^m}x^r+x^r\underbrace{\sum_{\begin{subarray}{c}a+b=p^m\\a>0,b\geq
					0\end{subarray}}\binom{er}{b}\frac{\delta^{(a)}_y(u^r)}{u^r}}_{\in
					(y)}\label{functformula}
				\end{align}
 Note that
	$\binom{er}{p^m}x^r=\sum_{c+d=m}\binom{e}{p^c}\delta^{(p^d)}_x(x^r)$ by
	Lemma \ref{lemma:binomials},
	so \eqref{functformula} shows that $\delta^{(p^m)}_y-\sum_{c+d=m}\binom{e}{p^c}\delta^{(p^d)}_x\in
	y(B\otimes \mathscr{D}_{A/k}(\log\;(x))$ as claimed.

	For \ref{logfunct3} assume that $A\hookrightarrow B$ is tamely ramified. It suffices to show that $\delta^{(p^m)}_{x}$ is in the image
	of $\bar{f}^{\sharp}:\mathscr{D}_{B/k}(\log\;(y))\rightarrow
	\bar{f}^*\mathscr{D}_{A/k}(\log\;(x))$ for every $m\geq 0$, because $\bar{f}^{\sharp}$ is injective
	by the separability of the residue extensions. Consider the completions of $A$ and
	$B$: $\widehat{A}\hookrightarrow\widehat{B}$. Replacing $\widehat{B}$ 
	by an \'etale extension does not change differential operators, so we may assume that $u=v^e$ in
	$\widehat{B}$. Indeed, by Hensel's Lemma, $u$ has an $e$-th root in $\widehat{B}$,
	if and only if it has an $e$-th root modulo $y$, and since $e$ is prime to $p$ by
	assumption, the extension of the residue fields obtained by adjoining an $e$-th
	root is separable. Replacing $y$ by $vy$, we may assume that $x=y^e$. Then
	\eqref{functformula} shows
	\[\delta^{(p^m)}_y=\sum_{\begin{subarray}{c}c+d=m\\c,d\geq 0
	\end{subarray}}\binom{e}{p^c}\delta^{(p^d)}_x\]
	In particular, $\delta^{(1)}_y=e\delta^{(1)}_x$, so $\delta^{(1)}_x$ is in the
	image of $\bar{f}^{\sharp}$. We proceed inductively: 
	\[\delta^{(p^m)}_{y}=e\delta_x^{(p^m)}+\underbrace{\sum_{\begin{subarray}{c}c+d=m\\c>1, d\geq 0
	\end{subarray}}\binom{e}{p^c}\delta^{(p^d)}_x}_{\in\image{\bar{f}^\sharp}}\]
	which completes the proof.
\end{proof}

\begin{Corollary}\label{cor:logpullback}
	We continue to use the notations from Proposition \ref{prop:logfunctoriality}.
	If $E$ is a $\mathscr{D}_{\overline{X}/k}(\log D_X)$-module, then
	$\bar{f}^*E$ is a $\mathscr{D}_{\overline{Y}/k}(\log D_Y)$-module.

	If $\bar{f}$ is finite \'etale and tamely ramified with respect to $\overline{X}\setminus X$,
	and $F$ a $\mathscr{D}_{\overline{Y}/k}(\log D_Y)$-module, then
	$\bar{f}_* {F}$ is a $\mathscr{D}_{\overline{X}/k}(\log D_X)$-module.
\end{Corollary}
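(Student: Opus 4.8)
The plan is to derive both assertions formally from Proposition~\ref{prop:logfunctoriality}, mimicking the way the analogous facts for $\mathscr{D}_{\overline{X}/k}$ and a finite \'etale morphism follow from the functoriality of ordinary rings of differential operators (compare the arguments in Lemma~\ref{lemma:restriction} and Proposition~\ref{prop:universaltorsor}).

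For the pullback statement I would proceed as follows. By Proposition~\ref{prop:logfunctoriality}\ref{logfunct1} the sheaf $\bar{f}^*\mathscr{D}_{\overline{X}/k}(\log D_X)$ is a $(\mathscr{D}_{\overline{Y}/k}(\log D_Y),\bar{f}^{-1}\mathscr{D}_{\overline{X}/k}(\log D_X))$-bimodule (a ``transfer bimodule''). Given a left $\mathscr{D}_{\overline{X}/k}(\log D_X)$-module $E$, form
\[
\bar{f}^*\mathscr{D}_{\overline{X}/k}(\log D_X)\otimes_{\bar{f}^{-1}\mathscr{D}_{\overline{X}/k}(\log D_X)}\bar{f}^{-1}E ,
\]
which carries a left $\mathscr{D}_{\overline{Y}/k}(\log D_Y)$-module structure coming from the left factor. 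Since $\bar{f}^*\mathscr{D}_{\overline{X}/k}(\log D_X)=\mathcal{O}_{\overline{Y}}\otimes_{\bar{f}^{-1}\mathcal{O}_{\overline{X}}}\bar{f}^{-1}\mathscr{D}_{\overline{X}/k}(\log D_X)$ as an $\mathcal{O}_{\overline{Y}}$-module, this object is canonically identified with $\bar{f}^*E$, and one checks that the $\mathcal{O}_{\overline{Y}}$-module structure induced from the $\mathscr{D}_{\overline{Y}/k}(\log D_Y)$-action is the usual one. Exactly as in the non-logarithmic situation this needs no tameness hypothesis; here one only uses the existence of the canonical map $\bar{f}^{\sharp}$ of \ref{logfunct2} and the bimodule structure of \ref{logfunct1}.

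For the pushforward, suppose $\bar{f}$ is finite, $f$ \'etale and $\bar{f}$ tamely ramified along $D_X$. By Proposition~\ref{prop:logfunctoriality}\ref{logfunct3} the morphism $\bar{f}^{\sharp}$ is then an isomorphism, so composing its inverse with the structure map $\theta\mapsto 1\otimes\theta$, $\bar{f}^{-1}\mathscr{D}_{\overline{X}/k}(\log D_X)\to\bar{f}^*\mathscr{D}_{\overline{X}/k}(\log D_X)$, of the bimodule in \ref{logfunct1} produces a morphism $\bar{f}^{-1}\mathscr{D}_{\overline{X}/k}(\log D_X)\to \mathscr{D}_{\overline{Y}/k}(\log D_Y)$ of sheaves of rings; concretely, in local coordinates with $D_X$ cut out by $x_1\cdots x_r=0$, this map sends $\delta_{x_i}^{(m)}$ and $\partial_{x_i}^{(m)}$ to the corresponding log-operators on $\overline{Y}$, and its multiplicativity is exactly what the coordinate computation in the proof of \ref{logfunct3} records (cf.\ the identity $\delta^{(p^m)}_y=\sum_{c+d=m}\binom{e}{p^c}\delta^{(p^d)}_x$ after reducing to the normal form $x=y^e$). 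Since $\bar{f}$ is affine, adjunction turns this into a morphism of sheaves of rings $\mathscr{D}_{\overline{X}/k}(\log D_X)\to\bar{f}_*\mathscr{D}_{\overline{Y}/k}(\log D_Y)$ on $\overline{X}$ extending $\mathcal{O}_{\overline{X}}\to\bar{f}_*\mathcal{O}_{\overline{Y}}$. A $\mathscr{D}_{\overline{Y}/k}(\log D_Y)$-module $F$ then pushes forward to $\bar{f}_*F$, which is a module over $\bar{f}_*\mathscr{D}_{\overline{Y}/k}(\log D_Y)$, hence a $\mathscr{D}_{\overline{X}/k}(\log D_X)$-module by restriction of scalars, with the expected underlying $\mathcal{O}_{\overline{X}}$-module.

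The step I expect to be the main obstacle is making sure everything is compatible with the relevant ring structures, i.e.\ that the comparison map $\bar{f}^{-1}\mathscr{D}_{\overline{X}/k}(\log D_X)\to \mathscr{D}_{\overline{Y}/k}(\log D_Y)$ is multiplicative rather than merely $\mathcal{O}$-linear — this is precisely where tameness (log-\'etaleness of $\bar f$) is used. I would handle it in the same way Proposition~\ref{prop:logfunctoriality} is proved: localize at a generic point of a component of $D_X$ to reduce to a tamely ramified extension of discrete valuation rings, pass to completions and adjoin an auxiliary \'etale extension so that the ramification takes the normal form $x=y^e$ with $e$ prime to $p$, and then read off multiplicativity directly from the explicit expressions for $\bar{f}^{\sharp}(\delta^{(p^m)}_y)$ in terms of the $\delta^{(p^d)}_x$. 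The remaining verifications — compatibility of all the actions with the $\mathcal{O}$-module structures, and the fact that the transfer-bimodule and restriction-of-scalars constructions glue over $\overline{X}$ — are routine and identical to the non-logarithmic, \'etale case.
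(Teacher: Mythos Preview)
Your argument is correct and is exactly the formal consequence of Proposition~\ref{prop:logfunctoriality} that the paper intends; the paper gives no separate proof of this corollary.

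One remark on the point you flag as the main obstacle: the multiplicativity of the map $\bar{f}^{-1}\mathscr{D}_{\overline{X}/k}(\log D_X)\to\mathscr{D}_{\overline{Y}/k}(\log D_Y)$ comes for free, and you do not need to redo the coordinate computation. Both sheaves are $\mathcal{O}_{\overline{Y}}$-torsion-free and sit inside $j_*\mathscr{D}_{Y/k}$, where $j:Y\hookrightarrow\overline{Y}$ is the open immersion. Over $Y$ your map is nothing but the restriction of the honest ring homomorphism $f^{-1}\mathscr{D}_{X/k}\to\mathscr{D}_{Y/k}$ coming from the \'etaleness of $f$; the isomorphism $\bar{f}^{\sharp}$ from \ref{logfunct3} only serves to guarantee that the image of $\bar{f}^{-1}\mathscr{D}_{\overline{X}/k}(\log D_X)$ under this ring map actually lands in the subsheaf $\mathscr{D}_{\overline{Y}/k}(\log D_Y)\subset j_*\mathscr{D}_{Y/k}$. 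Since multiplicativity can be checked after restriction to the dense open $Y$, it holds automatically.
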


\section{$(X,\overline{X})$-Regular Singular Stratified Bundles}\label{sec:rs}
\begin{Definition}\label{defn:rs}
		If $(X,\overline{X})$ is a good partial compactification, then a
			stratified bundle $E$
			is called
			$(X,\overline{X})$-\emph{regular singular} if it extends to an
			$\mathcal{O}_{\overline{X}}$-torsion-free, $\mathcal{O}_{\overline{X}}$-coherent
			$\mathscr{D}_{\overline{X}/k}(\log D_X)$-module $\overline{E}$ on
			$\overline{X}$.

	We define $\Strat^{\rs}( (X,\overline{X}))$ to be the full
			subcategory of $\Strat(X)$ consisting of
			$(X,\overline{X})$-regular singular bundles.
\end{Definition}
\begin{Remark}
	The notion of an $(X,\overline{X})$-regular singular stratified bundle with
	$(X,\overline{X})$ an actual good compactification (i.e.~$\overline{X}$ proper)
	was studied in \cite{Gieseker/FlatBundles}. Many of his arguments do
	not use the properness of $\overline{X}$ and carry over to
	our setup.
\end{Remark}

The following proposition shows that to check that a stratified bundle is 
$(X,\overline{X})$-regular singular, we may always assume that
$\overline{X}\setminus X$ is a smooth divisor, and we may shrink $\overline{X}$ around the
generic points of $\overline{X}\setminus X$.
\begin{Proposition}\label{prop:locality}
	Let $(X,\overline{X})$ be a good partial compactification, $\eta_1,\ldots, \eta_r$
	the generic points of $\overline{X}\setminus X$, and $E$ a stratified
	bundle on $X$. Then $E$ is $(X,\overline{X})$-regular singular, if and only if there are open neighborhoods
	$\overline{U}_i$ of $\eta_i$, $i=1,\ldots, r$, such that $E|_{\overline{U}_i\cap
	X}$ is $(\overline{U}_i\cap
	X,\overline{U}_i)$-regular singular.
\end{Proposition}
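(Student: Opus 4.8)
The plan is to reduce the global statement to the local one via the functoriality established in Proposition \ref{prop:logfunctoriality} and the sheaf-theoretic nature of coherence and torsion-freeness. The "only if" direction is essentially trivial: if $\overline{E}$ is a global $\mathscr{D}_{\overline{X}/k}(\log D_X)$-coherent, $\mathcal{O}_{\overline{X}}$-torsion-free extension of $E$, then for \emph{any} open $\overline{U}\subset\overline{X}$ the restriction $\overline{E}|_{\overline{U}}$ is an $\mathcal{O}_{\overline{U}}$-coherent, torsion-free $\mathscr{D}_{\overline{U}/k}(\log D_X\cap\overline{U})$-module extending $E|_{\overline{U}\cap X}$; in particular this works for the $\overline{U}_i$. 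So the content is the "if" direction.

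For the "if" direction, suppose we are given neighborhoods $\overline{U}_i\ni\eta_i$ and $\mathscr{D}_{\overline{U}_i/k}(\log)$-coherent torsion-free extensions $\overline{E}_i$ of $E|_{\overline{U}_i\cap X}$. First I would reduce to the case where $X\subset\overline{X}$ has $\overline{X}\setminus X$ of pure codimension $1$: by Lemma \ref{lemma:restriction}\ref{item:restrictionRegular4}, removing the closed set $Z$ of points of $\overline{X}\setminus X$ of codimension $\geq 2$ from $\overline{X}$ does not change $\Strat$ of anything relevant and does not affect whether an extension over the generic points $\eta_i$ exists; so replace $\overline{X}$ by $\overline{X}\setminus\overline{Z}$ (after possibly shrinking so this is still a good partial compactification — one must check the boundary remains strict normal crossings, which holds since we only delete closed points of high codimension inside the boundary). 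Then $D_X=\overline{X}\setminus X$ is a smooth divisor (each component containing exactly one $\eta_i$), and $X\subset\overline{X}$ has complement of codimension exactly $1$. Now the idea is to \emph{glue}: on $X$ itself we have $E$ as a $\mathscr{D}_{X/k}$-module, hence a fortiori a $\mathscr{D}_{X/k}(\log\emptyset)=\mathscr{D}_{X/k}$-module, and on each $\overline{U}_i$ we have $\overline{E}_i$; on the overlaps $\overline{U}_i\cap X$ both restrict to $E|_{\overline{U}_i\cap X}$. The key point is that the gluing data is canonical: on $\overline{U}_i\cap X$ the modules $E|_{\overline{U}_i\cap X}$ and $\overline{E}_i|_{\overline{U}_i\cap X}$ agree as $\mathcal{O}$-modules with $\mathscr{D}(\log)$-action (the log structure is trivial there since $D_X$ doesn't meet $\overline{U}_i\cap X$), so there is a unique isomorphism identifying them, and on triple overlaps $\overline{U}_i\cap\overline{U}_j\cap X$ (which after the codimension reduction may well be empty, but in any case) the cocycle condition holds automatically by uniqueness. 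Hence the $\overline{E}_i$ and $E$ glue to a coherent $\mathscr{D}_{\overline{X}/k}(\log D_X)$-module $\overline{E}$ on $\overline{X}$; it is $\mathcal{O}_{\overline{X}}$-torsion-free because torsion-freeness is checked stalkwise and each stalk comes from $E$ or some $\overline{E}_i$; and it extends $E$ by construction. This is exactly the required $(X,\overline{X})$-regular singular witness.

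The main obstacle — and the place requiring care rather than formula-grinding — is the codimension reduction step: I need to verify that after deleting the high-codimension part of the boundary the pair $(X,\overline{X}')$ is still a good partial compactification (smoothness of $\overline{X}'$ is automatic as an open in $\overline{X}$; the boundary $\overline{X}'\setminus X$ is an open subscheme of $D_X$ and hence still strict normal crossings), and that $(X,\overline{X})$-regular singularity for the original pair is equivalent to $(X,\overline{X}')$-regular singularity — the nontrivial direction being that a torsion-free coherent $\mathscr{D}_{\overline{X}'/k}(\log)$-extension $\overline{E}'$ on $\overline{X}'$ pushes forward along the open immersion $j:\overline{X}'\hookrightarrow\overline{X}$ to a coherent extension on $\overline{X}$, exactly as in the proof of Lemma \ref{lemma:restriction}\ref{item:restrictionRegular4}: $j_*\overline{E}'=((j_*\overline{E}')^\vee)^\vee$ is $\mathcal{O}_{\overline{X}}$-coherent by \cite[Exp.~VIII, Prop.~3.2]{SGA2} since $\codim_{\overline{X}}(\overline{X}\setminus\overline{X}')\geq 2$, it is torsion-free as a second dual, and it carries a $\mathscr{D}_{\overline{X}/k}(\log D_X)$-action because $j_*\mathscr{D}_{\overline{X}'/k}(\log)=\mathscr{D}_{\overline{X}/k}(\log D_X)$ over a codimension-$\geq 2$ complement. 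Once this lemma-style reduction is in place, the gluing argument is formal.
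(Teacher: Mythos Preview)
Your approach is essentially the paper's: glue the local extensions over an open subset of $\overline{X}$ whose complement has codimension $\geq 2$, then push forward along the open immersion using \cite[Exp.~VIII, Prop.~3.2]{SGA2}. Two points deserve tightening.

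First, the ``codimension reduction'' is confusingly stated. By Definition~\ref{defn:gpc} the boundary $D_X=\overline{X}\setminus X$ is already a strict normal crossings divisor, hence of pure codimension $1$; there is nothing to remove on that score. What you presumably want is to delete the singular locus of $D_X$ (a closed subset of codimension $\geq 2$ in $\overline{X}$) so that the components become disjoint. That is fine, but note that ``the set of points of $D_X$ of codimension $\geq 2$'' is not closed --- it is all of $D_X$ minus the finitely many generic points --- so the sentence as written does not define a scheme.

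Second, and more substantively, your gluing argument does not address the overlaps $\overline{U}_i\cap\overline{U}_j$ for $i\neq j$. You only check compatibility on $X\cap\overline{U}_i$ and the cocycle condition on $X\cap\overline{U}_i\cap\overline{U}_j$, but for the cover $\{X,\overline{U}_1,\ldots,\overline{U}_r\}$ you also need an isomorphism $\overline{E}_i|_{\overline{U}_i\cap\overline{U}_j}\cong\overline{E}_j|_{\overline{U}_i\cap\overline{U}_j}$, and this intersection need not lie in $X$ (even after your reduction, $\overline{U}_i$ may meet $D_j$). The paper handles this by first shrinking each $\overline{U}_i$ so that it meets no boundary component other than the one through $\eta_i$; then $\overline{U}_i\cap\overline{U}_j\subset X$ and the compatibility is automatic. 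Once you insert this shrinking step your argument goes through, and the final pushforward across codimension $\geq 2$ is exactly what the paper does.
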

\begin{proof}
	Only the ``if'' direction is interesting. Given open neighborhoods $\overline{U}_i$
	of $\eta_i$, $i=1,\ldots, r$, as in the proposition, we may
	assume, by shrinking the $\overline{U}_i$ if necessary, that $\eta_j\not\in
	\overline{U}_i$ if $i\neq j$ and then that $X\subset \overline{U}_i$.
	Hence, if $\overline{E}_i$ is an
	$\mathcal{O}_{\overline{X}}$-torsion-free $\mathcal{O}_{\overline{U}_i}$-coherent extension of $E$
	to $\overline{U}_i$ as 
	$\mathscr{D}_{\overline{U}_i/k}(\log D_X\cap \overline{U}_i)$-module, then we
	can glue to obtain an extension $\overline{E}'$ on $\bigcup_i \overline{U}_i$.  Write $\overline{U}:=\bigcup_i \overline{U_i}$, then
	$\overline{X}\setminus \overline{U}$ has codimension $\geq 2$ in
	$\overline{X}$. If $j:\overline{U}\hookrightarrow \overline{X}$ be the open
	immersion, then $j_*E'$ is an
	$\mathcal{O}_{\overline{X}}$-torsion-free, $\mathcal{O}_{\overline{X}}$-coherent
	$\mathscr{D}_{\overline{X}/k}(\log D_X)$-module.
\end{proof}

We also have the notion of a pullback of $(X,\overline{X})$-regular singular stratified
bundles:
\begin{Proposition}\label{prop:pullback}
	Let $(X,\overline{X})$, $(Y,\overline{Y})$ be good partial compactifications and
	$\bar{f}:\overline{X}\rightarrow \overline{Y}$ a $k$-morphism, such that
	$\bar{f}(X)\subset Y$. Then for every $(X,\overline{X})$-regular singular
	stratified bundle on $X$, $(\bar{f}|_Y)^*E$ is $(Y,\overline{Y})$-regular singular.
\end{Proposition}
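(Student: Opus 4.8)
The plan is to produce the required logarithmic extension of $f^{*}E$ (writing $f:=\bar f|_{Y}$) by pulling back a logarithmic extension of $E$ and then killing its $\mathcal{O}$-torsion. First I would invoke Definition~\ref{defn:rs} to fix an $\mathcal{O}_{\overline X}$-torsion-free, $\mathcal{O}_{\overline X}$-coherent $\mathscr{D}_{\overline X/k}(\log D_X)$-module $\overline E$ extending $E$. The candidate extension of $f^{*}E$ across $D_{Y}$ is the $\mathcal{O}$-module pullback $\overline F:=\bar f^{*}\overline E=\mathcal{O}_{\overline Y}\otimes_{\bar f^{-1}\mathcal{O}_{\overline X}}\bar f^{-1}\overline E$. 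It is $\mathcal{O}_{\overline Y}$-coherent; by Corollary~\ref{cor:logpullback} (whose hypothesis $\bar f(Y)\subset X$ is exactly what is assumed here) it carries a $\mathscr{D}_{\overline Y/k}(\log D_Y)$-module structure; and since the restriction of $\bar f$ to $Y$ is $f$, restricting $\overline F$ to $Y$ recovers $f^{*}(\overline E|_{X})=f^{*}E$ together with its $\mathscr{D}_{Y/k}$-structure.

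The one property of $\overline F$ that is not automatic --- and the main obstacle --- is $\mathcal{O}_{\overline Y}$-torsion-freeness: $\bar f$ is an arbitrary $k$-morphism and may, for instance, contract part of $D_{Y}$ into $D_{X}$, which can create torsion supported on $D_{Y}$. To get around this I would let $T\subset\overline F$ be the $\mathcal{O}_{\overline Y}$-torsion subsheaf (coherent, since $\overline Y$ is integral and $\overline F$ is coherent) and set $\overline E':=\overline F/T$. Two things then need checking. First, $T$ is a $\mathscr{D}_{\overline Y/k}(\log D_Y)$-submodule, so that $\overline E'$ is again a $\mathscr{D}_{\overline Y/k}(\log D_Y)$-module; this holds for any $\mathcal{O}$-coherent module over any sheaf of subalgebras of $\mathscr{D}_{\overline Y/k}$ containing $\mathcal{O}_{\overline Y}$, by the usual iterated-commutator estimate: if $a\neq 0$ and $as=0$, then $a^{n+1}\theta(s)=0$ for every operator $\theta$ of order $\le n$, using that $[\theta,a]=\theta a-a\theta$ again lies in the subalgebra and has order $\le n-1$. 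Second, $\overline E'|_{Y}=f^{*}E$, i.e. $T|_{Y}=0$; this holds because $\overline F|_{Y}=f^{*}E$ is a stratified bundle on $Y$, hence a locally free --- in particular torsion-free --- $\mathcal{O}_{Y}$-module by Remark~\ref{rem:names}.

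Combining these, $\overline E'$ is an $\mathcal{O}_{\overline Y}$-torsion-free, $\mathcal{O}_{\overline Y}$-coherent $\mathscr{D}_{\overline Y/k}(\log D_Y)$-module with $\overline E'|_{Y}\cong f^{*}E$, which by Definition~\ref{defn:rs} is exactly the assertion that $f^{*}E$ is $(Y,\overline Y)$-regular singular. (Alternatively, one could first reduce via Proposition~\ref{prop:locality} to a neighbourhood of each generic point of $D_{Y}$ and argue over the corresponding discrete valuation ring, but the torsion-quotient argument above is shorter and uses nothing about $\bar f$ beyond $\bar f(Y)\subset X$.)
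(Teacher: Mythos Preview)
Your proof is correct and follows the same route as the paper, which simply records the result as ``a direct consequence of Corollary~\ref{cor:logpullback}''. You are in fact more careful than the paper's one-line argument: you explicitly address the $\mathcal{O}_{\overline Y}$-torsion-freeness required by Definition~\ref{defn:rs} by passing to the quotient of $\bar f^{*}\overline E$ by its torsion subsheaf and checking that this subsheaf is $\mathscr{D}_{\overline Y/k}(\log D_Y)$-stable, a point the paper leaves implicit.
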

\begin{proof}This is a direct consequence of Corollary \ref{cor:logpullback}.\end{proof}

\begin{Proposition}\label{prop:subbundlesAreRS}\label{prop:rs-tannakian}
	Let $(X,\overline{X})$ be a good partial compactification and $E, E'$ be
	$(X,\overline{X})$-regular singular bundles.
	Then the following stratified bundles are also 	$(X,\overline{X})$-regular singular:
	\begin{enumerate}
		\item\label{rs-tannakian1}  Every substratified bundle $F\subset E$.
		\item\label{rs-tannakian2} Every quotient-stratified bundle $E/F$ of $E$.
		\item\label{rs-tannakian3} $E\otimes_{\mathcal{O}_X} E'$.
		\item\label{rs-tannakian4} $\mathcal{H}om_{\mathcal{O}_X}(E,E')$.
	\end{enumerate}
	It follows that $\Strat^{\rs}((X,\overline{X}))$ is a sub-Tannakian subcategory of
	$\Strat(X)$, and, if $\iota:\Strat^{\rs}( (X,\overline{X}) )\hookrightarrow \Strat(X)$
	denotes the inclusion functor, then for a $(X,\overline{X})$-regular singular
	bundle $E$, restriction of $\iota$ gives an equivalence $\left<E\right>_{\otimes}\rightarrow
	\left<\iota(E)\right>_{\otimes}$.
\end{Proposition}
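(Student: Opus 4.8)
The plan is to prove the four closure properties by producing, for each of the indicated stratified bundles, an $\mathcal{O}_{\overline{X}}$-coherent, $\mathcal{O}_{\overline{X}}$-torsion-free $\mathscr{D}_{\overline{X}/k}(\log D_X)$-module extending it, starting from fixed such extensions $\overline{E}$ of $E$ and $\overline{E}'$ of $E'$. First I would treat (c) and (d): the sheaf $\overline{E}\otimes_{\mathcal{O}_{\overline{X}}}\overline{E}'$ is manifestly $\mathcal{O}_{\overline{X}}$-coherent and, since both factors are locally free (stratified bundles are locally free, and a torsion-free coherent extension on a smooth scheme is reflexive, hence locally free in codimension $1$ where it matters), torsion-free; it carries the tensor-product $\mathscr{D}_{\overline{X}/k}(\log D_X)$-action, and restricts to $E\otimes E'$ on $X$. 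Here I should be slightly careful: the torsion-free extension $\overline{E}$ need not be locally free everywhere on $\overline{X}$, only reflexive, but by Proposition \ref{prop:locality} I may shrink $\overline{X}$ around the generic points of $D_X$, where $\overline{E}$ is automatically locally free, so this causes no trouble. Similarly $\mathcal{H}om_{\mathcal{O}_{\overline{X}}}(\overline{E},\overline{E}')$ is coherent, torsion-free (a Hom into a torsion-free sheaf is torsion-free), carries the natural $\mathscr{D}_{\overline{X}/k}(\log D_X)$-action, and restricts correctly, giving (d); note $\overline{E}^\vee$ is handled as the special case $\overline{E}'=\mathcal{O}_{\overline{X}}$.

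Next, for (a) and (b): given a sub-stratified-bundle $F\subset E$ on $X$, let $j:X\hookrightarrow\overline{X}$ be the open immersion. Again shrinking around the generic points of $D_X$ via Proposition \ref{prop:locality}, I may assume $\overline{X}$ is affine and small; then I set $\overline{F}:=j_*F\cap\overline{E}$ inside $j_*E$. This is $\mathcal{O}_{\overline{X}}$-coherent (it is a subsheaf of the coherent sheaf $\overline{E}$), it is torsion-free (being a subsheaf of the torsion-free $\overline{E}$), and it is stable under $\mathscr{D}_{\overline{X}/k}(\log D_X)$ because both $j_*F$ and $\overline{E}$ are (the former since $F$ is a $\mathscr{D}_{X/k}$-submodule and $j_*$ of log-differential operators pushes forward correctly, the latter by hypothesis). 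Restricting to $X$ recovers $F$ since $j_*$ and $j^*$ are adjoint and $\overline{E}|_X=E$. This gives (a). For (b), I then take $\overline{E}/\overline{F}$: it is coherent and inherits the $\mathscr{D}_{\overline{X}/k}(\log D_X)$-action, but it may have $\mathcal{O}_{\overline{X}}$-torsion, so I divide by the torsion subsheaf, which is itself a $\mathscr{D}_{\overline{X}/k}(\log D_X)$-submodule (differential operators preserve torsion since they are $\mathcal{O}$-linear up to lower order — more precisely the torsion of a $\mathscr{D}$-module is a $\mathscr{D}$-submodule); the resulting torsion-free quotient restricts to $E/F$ on $X$ because $\overline{F}|_X=F$ and forming the torsion quotient does nothing over the reduced scheme $X$.

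Finally, the Tannakian conclusions: since $\Strat^{\rs}((X,\overline{X}))$ is a full subcategory of $\Strat(X)$ closed under subobjects, quotients, tensor products, and internal Hom (hence duals), and contains the unit object $\mathcal{O}_X$ (extended by $\mathcal{O}_{\overline{X}}$), it is a sub-Tannakian subcategory. For the last assertion, $\left<E\right>_\otimes$ computed inside $\Strat^{\rs}((X,\overline{X}))$ and inside $\Strat(X)$ agree because its objects are subquotients of bundles $P(E,E^\vee)$, and by the closure properties just proved these subquotients are all $(X,\overline{X})$-regular singular; so the inclusion $\left<E\right>_\otimes\to\left<\iota(E)\right>_\otimes$ is an equivalence, being fully faithful (full subcategory) and essentially surjective. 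I expect the main obstacle to be bookkeeping around torsion-freeness versus local freeness of the extension $\overline{E}$: the cleanest route is to invoke Proposition \ref{prop:locality} at the outset to reduce to a small affine $\overline{X}$ on which $\overline{E}$ is locally free and $D_X$ is smooth, after which all the above operations are routine; verifying that the log-differential-operator action is genuinely inherited in each construction (especially the intersection $j_*F\cap\overline{E}$) is the only point requiring a line of care.
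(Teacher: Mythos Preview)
Your proposal is correct and follows essentially the same route as the paper: for (a) and (b) the paper likewise takes $\overline{F}:=j_*F\cap\overline{E}$ inside $j_*E$, and for (c) and (d) it uses the natural log-$\mathscr{D}$-actions on $\overline{E}\otimes\overline{E}'$ and $\HHom(\overline{E},\overline{E}')$. One small simplification you missed: with $\overline{F}=j_*F\cap\overline{E}$ the quotient $\overline{E}/\overline{F}$ is already torsion-free (it injects into $j_*E/j_*F\hookrightarrow j_*(E/F)$), so there is no need to divide out torsion; also note that ``torsion-free $\Rightarrow$ reflexive'' is false in general, though your intended conclusion (locally free in codimension~$1$) holds directly since a finitely generated torsion-free module over a DVR is free.
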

\begin{proof}
	Again we write $D_X:=\overline{X}\setminus X$, $j:X\hookrightarrow \overline{X}$.
	For \ref{rs-tannakian1}, let $\overline{E}$ be an $\mathcal{O}_{\overline{X}}$-torsion-free $\mathcal{O}_{\overline{X}}$-coherent
	$\mathscr{D}_{\overline{X}/k}(\log D_X)$-module extending $E$. Then $j_*F$ and $\overline{E}$ are both
	$\mathscr{D}_{\overline{X}/k}(\log D_X)$-submodules of $j_*E$; let $\overline{F}$ be their
	intersection. Then $\overline{F}$ and
	$\overline{E}/\overline{F}$ are $\mathcal{O}_{\overline{X}}$-coherent,
	$\mathcal{O}_{\overline{X}}$-torsion free
	$\mathscr{D}_{\overline{X}/k}(\log D_X)$-modules extending
	$F$, resp.~$E/F$. 

	If $\overline{E},\overline{E'}$ are $\mathcal{O}_{\overline{X}}$-coherent
	$\mathscr{D}_{\overline{X}/k}(\log D_X)$-modules extending $E$ and $E'$, then \ref{rs-tannakian3} and \ref{rs-tannakian4} follow from the fact that
	$\HHom_{\mathcal{O}_{\overline{X}}}(\overline{E},\overline{E}')$ and
	$\overline{E}\otimes_{\mathcal{O}_{\overline{X}}} \overline{E}'$ carry natural
	$\mathscr{D}_{\overline{X}/k}(\log D_X)$-actions extending those coming from $E$ and
	$E'$.
\end{proof}

\begin{Proposition}[{\cite[Lemma 3.8]{Gieseker/FlatBundles}}]\label{prop:globaldecomp}
	Let $(X,\overline{X})$ be a good partial compactification such that
	$D_X:=\overline{X}\setminus X$ is a smooth divisor, and $i:D_X\hookrightarrow
	\overline{X}$ the closed immersion ($D_X$ reduced). Write
	\[\overline{\mathscr{D}}:=\ker\left(\mathscr{D}_{\overline{X}/k}(\log D_X)\rightarrow
	i_*i^*\mathscr{D}_{\overline{X}/k}\right).\]
	If
	$\overline{E}$ is a locally free
	$\mathcal{O}_{\overline{X}}$-coherent
	$\mathscr{D}_{\overline{X}/k}(\log D_X)$-module, then
	$\overline{\mathscr{D}}$ acts $\mathcal{O}_{D_X}$-linearly on
	$\overline{E}|_{D_X}$, and there exists a decomposition
	\begin{equation} \label{eqn:globaldecomp}
		\overline{E}|_{D_X}=\bigoplus_{\alpha\in\Z_p}
		F_{\alpha},
	\end{equation}
	such that $\theta\in \overline{\mathscr{D}}$ acts on $F_{\alpha}$ via
	$\theta(s)=\bar{\alpha} s$, where $\bar{\alpha}$ is defined as follows: If
	$y$ is a local defining equation for $D_X$, $p^N> \ord \theta$ and $\beta\in
	\N$, such that $\beta \equiv \alpha \mod p^N$, then
	\begin{equation}\label{eqn:coordinatefreedescription}
			\theta(y^\beta)=\bar{\alpha}y^{\beta}\mod y^{\beta+1}.
	\end{equation}
\end{Proposition}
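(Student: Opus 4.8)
The plan is to work locally on $\overline X$ around the generic point of $D_X$, exploit the coordinate description of $\mathscr D_{\overline X/k}(\log D_X)$ from Remark \ref{rem:log-diffops-in-coordinates}, and then assemble the local pieces into a global decomposition by checking that they are canonical. First I would reduce to the affine situation: choose an open affine $\overline U\subset \overline X$ with coordinates $x_1,\ldots,x_n$ such that $D_X\cap\overline U=(x_1)$, so that $\mathscr D_{\overline U/k}(\log D_X\cap\overline U)$ is generated by the operators $\delta_{x_1}^{(m)}=x_1^m\partial_{x_1}^{(m)}$ together with the $\partial_{x_i}^{(m)}$ for $i>1$. The key point is that $\delta_{x_1}^{(m)}$ carries the ideal $(x_1^\ell)$ into itself for every $\ell$, hence the subsheaf $\overline{\mathscr D}$ (those operators killing $\mathcal O_{\overline X}/\mathcal O_{\overline X}(-D_X)$, i.e.\ landing in $x_1\cdot(\text{stuff})$ after applying to sections) acts $\mathcal O_{D_X}$-linearly on $\overline E|_{D_X}$; this is the content of the first assertion, and it is a direct computation with the generators: $\partial_{x_i}^{(m)}$ for $i>1$ obviously descends, and $\delta_{x_1}^{(m)}(fx_1^\ell)\equiv \binom{\ell}{m}\,\ldots$ modulo $x_1^{\ell+1}$ by a Leibniz expansion, whose leading term is $\mathcal O_{D_X}$-linear in $f|_{D_X}$.

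Next I would diagonalize. The operators $\delta_{x_1}^{(1)},\delta_{x_1}^{(p)},\delta_{x_1}^{(p^2)},\ldots$, acting on the finite free $\mathcal O_{D_X}$-module $\overline E|_{D_X}$, pairwise commute (this follows from the commutation relations among divided-power operators, or from the fact that they come from a single log-connection along $D_X$), and each satisfies $(\delta_{x_1}^{(p^d)})^p=\delta_{x_1}^{(p^d)}$ on $\overline E|_{D_X}$ — the ``$p$-curvature is trivial on $F_\alpha$''-type relation — so each is a semisimple operator with eigenvalues in $\F_p$. Simultaneously diagonalizing the family $\{\delta_{x_1}^{(p^d)}\}_{d\geq 0}$ produces a decomposition $\overline E|_{D_X}=\bigoplus_{\bar a\in\prod_{d}\F_p}F_{\bar a}$ indexed by the system of eigenvalues, and I would re-index this by $\alpha\in\Z_p$ using Lemma \ref{lemma:binomials}\ref{lemma:padicExpansionViaBinomials}: the $p$-adic integer $\alpha$ with digits given by the eigenvalue of $\delta_{x_1}^{(p^d)}$ on $F_\alpha$, since $\binom{\beta}{p^d}$ for $\beta\in\N$ reduces mod $p$ to the $d$-th digit of $\beta$, and the general $\delta_{x_1}^{(m)}$ then acts on $F_\alpha$ by $\binom{\alpha}{m}$ via the product formula in Lemma \ref{lemma:binomials}. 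This gives both the decomposition \eqref{eqn:globaldecomp} locally and, for a general operator $\theta\in\overline{\mathscr D}$ of order $<p^N$, the explicit formula \eqref{eqn:coordinatefreedescription}: writing $\theta$ in terms of the generators and evaluating on $y^\beta$ with $\beta\equiv\alpha\bmod p^N$ isolates $\bar\alpha$ as the coefficient of $y^\beta$ modulo $y^{\beta+1}$.

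Finally I would globalize. The decomposition just constructed is canonical — $F_\alpha$ is the generalized (in fact genuine) eigenspace of the commuting family $\{\delta_{x_1}^{(p^d)}\}$ for the eigenvalue system $\alpha$, and the $\mathcal O_{D_X}$-linear operators $\delta_{x_1}^{(p^d)}$ are themselves intrinsic, being read off from $\theta(y^\beta)\bmod y^{\beta+1}$ via \eqref{eqn:coordinatefreedescription}, which does not depend on the choice of local defining equation $y$ (a different choice $y'=uy$ changes $y^\beta$ by a unit power series and does not affect the leading coefficient after reduction mod $y^{\beta+1}$). Hence the locally defined $F_\alpha$ glue to subsheaves of $\overline E|_{D_X}$, giving the global decomposition. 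The step I expect to be the main obstacle is verifying cleanly that the family $\{\delta_{x_1}^{(p^d)}\}$ acts semisimply with $\F_p$-eigenvalues on $\overline E|_{D_X}$ and that the $\delta_{x_1}^{(m)}$ act by $\binom{\alpha}{m}$ — i.e.\ pinning down exactly which relations among the divided-power log-operators force diagonalizability — together with checking the independence of \eqref{eqn:coordinatefreedescription} from all choices; the binomial congruences of Lemma \ref{lemma:binomials} are precisely what is needed to make the indexing by $\Z_p$ consistent across these relations.
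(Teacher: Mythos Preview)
Your proposal is correct and is essentially the standard argument, which the paper does not reproduce but attributes to \cite[Lemma 3.8]{Gieseker/FlatBundles}. The Remark immediately following the statement confirms that the paper has exactly your local description in mind: $s+x_1\overline{E}\in F_\alpha$ if and only if $\delta_{x_1}^{(m)}(s)\equiv\binom{\alpha}{m}s\bmod x_1\overline{E}$ for all $m$, and the coordinate-free formulation \eqref{eqn:coordinatefreedescription} is what guarantees the gluing.

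One small sharpening of the point you flag as the main obstacle: the relation $(\delta_{x_1}^{(p^d)})^p=\delta_{x_1}^{(p^d)}$ in fact holds already as an identity in the subalgebra of $\mathscr{D}_{\overline{X}/k}(\log D_X)$ generated by the $\delta_{x_1}^{(m)}$, not merely modulo $x_1$. Indeed, this subalgebra is commutative (both sides of any commutator annihilate every $x_1^n$, hence vanish as differential operators), and $(\delta_{x_1}^{(p^d)})^p-\delta_{x_1}^{(p^d)}$ likewise kills every $x_1^n$ since $\binom{n}{p^d}^p\equiv\binom{n}{p^d}\bmod p$. So the semisimplicity with $\F_p$-eigenvalues is an identity in the operator algebra itself and transfers to any module, in particular to $\overline{E}|_{D_X}$; there is no need to invoke $p$-curvature or any property special to $\overline{E}$.
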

\begin{Remark}
	\begin{enumerate}[label=(\alph*), ref=(\alph*)]
		\item Let us unravel the definition of the $F_{\alpha}$ after the choice of local
	coordinates $x_1,\ldots, x_n$ as in
	Remark \ref{rem:log-diffops-in-coordinates}
	\ref{rem:log-diffops-in-coordinates2}, such that $D_X$ is locally cut out by, say, $x_1$: A section $s$ of $E$
	has the property that $s+x_1E \in F_{\alpha}$ if and only if
	$\delta_{x_1}^{(m)}(s)=\binom{\alpha}{m}s+x_1E$.
	The more complicated description \eqref{eqn:coordinatefreedescription} from the
	proposition shows that the decomposition \eqref{eqn:globaldecomp}
	does not depend on the choice of coordinates and exists on all of $D_X$.
\item The existence of the decomposition \eqref{eqn:globaldecomp} is the reason for a
	rather striking difference of our setup from the characteristic $0$ situation:
	Proposition \ref{prop:globaldecomp}	has as a consequence (see Proposition \ref{cor:essentialimage}) that in our setup
	there are no objects analogous to regular singular flat connections with
	nilpotent but nontrivial residues.\end{enumerate}
\end{Remark}
\begin{Definition}
	Let $\overline{E}$ be an $\mathcal{O}_{\overline{X}}$-locally free
	$\mathscr{D}_{\overline{X}/k}(\log D_X)$-module with
	$D_X:=\overline{X}\setminus X$ smooth.
	The elements $\alpha\in \Z_p$ such that $F_\alpha\neq 0$ in the decomposition
	\eqref{eqn:globaldecomp}, are called \emph{exponents of
		$\overline{E}$} along $D_X$. If $D_X$
	is not smooth, but $D_X=\bigcup_{i=1}^r D_i$, $D_i$  smooth divisors, then
	\emph{the exponents of $\overline{E}$ along $D_i$} are defined by restricting
	$\overline{E}$ to an open set $U\subset \overline{X}$ intersecting $D_i$, but not
	$D_j$, for $j\neq i$. Finally, if $\overline{E}$ is
	$\mathcal{O}_{\overline{X}}$-torsion-free, but not locally free, then the
	exponents are defined by restricting to an open subset $\overline{U}\subset
	\overline{X}$ such that $\codim_{\overline{X}}\overline{X}\setminus
	\overline{U}\geq 2$ and such that $E|_{\overline{U}}$ is locally free.

	Propostion \ref{prop:globaldecomp} shows that this definition does not depend on
	the choices made.
\end{Definition}

We state another analogy to the characteristic
$0$ situation (see \cite{Levelt}):

\begin{Theorem}\label{thm:reflexive}Let $(X,\overline{X})$ be a good partial compactification, and $E$ an $\mathcal{O}_{\overline{X}}$-torsion-free,
	$\mathcal{O}_{\overline{X}}$-coherent
	$\mathscr{D}_{\overline{X}/k}(\log D_X)$-module. If the exponents of $E$ do not differ by integers, then $E$ is locally free if and
	only if it is reflexive. 
\end{Theorem}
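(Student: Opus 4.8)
The plan is to reduce to the case where $D_X$ is a smooth divisor and $\overline{X}$ is the spectrum of a regular local (or at least a suitable affine) ring with coordinates $x_1,\ldots, x_n$ so that $D_X = (x_1)$; the hypotheses and conclusion are both local and compatible with removing codimension-$\geq 2$ subsets (using Lemma \ref{lemma:restriction} \ref{item:restrictionRegular4} and the fact that reflexivity and local freeness can be checked after such a removal), and the exponents are defined by restriction to the locus where $E$ is already locally free. One direction is trivial: a locally free module over a regular scheme is reflexive. So the content is the converse: assuming $E$ reflexive and that no two exponents along $D_X$ differ by a nonzero integer, show $E$ is locally free. Since $E$ is reflexive on the regular $\overline{X}$, it is locally free on the complement of a closed set of codimension $\geq 2$, so after shrinking we may assume $E$ is locally free away from finitely many closed points lying on $D_X$, and in fact by localizing we may assume $\overline{X}$ is local of dimension $\geq 2$ with closed point the only possible non-free locus.

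The key mechanism is the exponent decomposition of Proposition \ref{prop:globaldecomp}, applied after restricting $E$ to the locally free locus: along the generic point $\eta$ of $D_X$, the module $E_\eta$ over the DVR $\mathcal{O}_{\overline{X},\eta}$ carries the $\delta_{x_1}^{(m)}$-action, and one gets a decomposition of $E_\eta/x_1 E_\eta$ (equivalently of $E$ localized and completed at $\eta$) into exponent pieces. The crucial normalization step is: because the exponents do not differ by integers, each $\delta_{x_1}^{(m)}$-action can be used to \emph{split off} a free piece — more precisely, I would show that over the completion $\widehat{\mathcal{O}}_{\overline{X},\eta}$ the $\mathscr{D}(\log)$-module $E$ decomposes as $\bigoplus_\alpha E^{(\alpha)}$ where each $E^{(\alpha)}$ has a single exponent $\alpha$, and that a $\mathscr{D}(\log)$-module with a single exponent which is $\mathcal{O}$-torsion-free over a regular base is automatically locally free (one reduces to exponent $0$ by twisting by the rank-one object with that exponent, and then the $\delta_{x_1}^{(m)}$ act trivially modulo $x_1$, forcing a basis of horizontal-mod-$x_1$ sections to lift to an $\mathcal{O}$-basis). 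This is the analogue of Levelt's lattice construction.

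Concretely, here is how I would carry it out. First, reduce as above to $\overline{X} = \Spec R$, $R$ a regular local ring of dimension $d \geq 2$, $D_X = (x_1)$, and $E$ locally free on $\Spec R \setminus \{\mathfrak{m}\}$ and reflexive. Second, pass to the DVR $R_{(x_1)}$: here $E_{(x_1)}$ is free, and Proposition \ref{prop:globaldecomp} (applied on the locally-free locus) gives the exponent decomposition of $E_{(x_1)}/x_1$; lift it, using that the exponents are $p$-adic integers pairwise non-congruent modulo $p^N$ for suitable $N$ and using the binomial identities of Lemma \ref{lemma:binomials}, to an honest decomposition $E_{(x_1)} = \bigoplus_\alpha E_{(x_1)}^{(\alpha)}$ as $\mathscr{D}_{R_{(x_1)}/k}(\log(x_1))$-modules. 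Third — and this is where the no-integer-difference hypothesis is essential — argue that such a decomposition is \emph{unique} (the exponent pieces are the generalized eigenspaces for the commuting operators $\delta_{x_1}^{(m)}$, and non-congruence of exponents mod $p^N$ separates them), hence it is Galois-invariant, hence by descent it extends over all codimension-$1$ points of $D_X$ and, since $E$ is reflexive (= intersection of its localizations at codimension-$1$ points), it extends to a decomposition of $E$ itself over $R$. Fourth, treat a single exponent $\alpha$: twisting $E^{(\alpha)}$ by the rank-one $\mathscr{D}(\log)$-module $\mathcal{O}_{\overline{X}}\cdot x_1^{-\alpha}$ (which makes sense as a formal symbol with $\delta_{x_1}^{(m)}$ acting by $\binom{-\alpha}{m}$) we may assume $\alpha = 0$; then all $\delta_{x_1}^{(m)}$ and all $\partial_{x_i}^{(m)}$ ($i\geq 2$) act on $E^{(0)}/x_1 E^{(0)}$ making it a stratified bundle on the smooth $D_X$, hence locally free, and a local frame lifts via the usual successive-approximation argument (solving $\delta_{x_1}^{(m)}$ away order by order, exactly as in the proof of Lemma \ref{lemma:restriction} \ref{item:restrictionRegular1}) to a local $\mathscr{D}(\log)$-frame of $E^{(0)}$ near $\eta$; reflexivity then propagates local freeness from the codimension-$1$ points to all of $R$.

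The main obstacle I anticipate is Step 3/4, i.e.\ globalizing the exponent decomposition from the generic point of $D_X$ to all of $D_X$ (and then to $\overline{X}$ via reflexivity) while simultaneously controlling the lift of a frame: one must be careful that the exponent pieces $E^{(\alpha)}$, a priori defined only after localization/completion at $\eta$, are $\mathcal{O}_{\overline{X}}$-coherent reflexive submodules of $E$ — this is exactly where the hypothesis that exponents do not differ by integers buys uniqueness and hence descent, and where one must verify that the twist $x_1^{-\alpha}$ interacts correctly with the $\mathscr{D}(\log)$-structure across components. Everything else is a routine but somewhat lengthy unwinding of the logarithmic differential operators in coordinates together with the binomial congruences of Lemma \ref{lemma:binomials}.
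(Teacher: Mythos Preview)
The paper's own proof is a one-line citation to \cite[Thm.~3.5]{Gieseker/FlatBundles}, so there is no detailed argument in the paper to compare against; your sketch is effectively an attempt to reprove Gieseker's result. The overall architecture --- exponent decomposition along $D_X$, the no-integer-difference hypothesis giving rigidity, reduction to a single exponent, and then to exponent $0$ --- is indeed the right shape, and is close to what Gieseker does.

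That said, there are two genuine gaps. First, the phrase ``Galois-invariant, hence by descent'' is misplaced: there is no Galois action in passing from $R_{(x_1)}$ (or its completion) back to $R$. What you actually need is that the exponent pieces are \emph{canonical} (they are generalized eigenspaces for the commuting family $\delta_{x_1}^{(m)}$), hence glue over an open cover; but even granting canonicity, the decomposition you produce lives a priori only over the completion $\widehat{R}_{(x_1)}$, and passing from a formal decomposition at $\eta$ to a decomposition of the reflexive sheaf $E$ over $R$ is not automatic. You flag this yourself, correctly, as the main obstacle --- and it really is one: the argument needs an honest algebraization step, not just ``reflexive $=$ intersection of localizations at height-$1$ primes''.

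Second, and more seriously, the final clause of Step~4 --- ``reflexivity then propagates local freeness from the codimension-$1$ points to all of $R$'' --- is circular. Reflexive sheaves are \emph{always} locally free in codimension~$1$ on a regular base; the entire content of the theorem is getting local freeness at the deeper points. The correct endgame, once you have reduced to a single exponent $0$, is not to lift a frame near $\eta$, but to argue (as in the proof of Proposition~\ref{cor:essentialimage}) that the $\mathscr{D}_{\overline{X}/k}(\log D_X)$-action extends to a full $\mathscr{D}_{\overline{X}/k}$-action via iterated Cartier descent; then $E^{(0)}$ is an $\mathcal{O}_{\overline{X}}$-coherent $\mathscr{D}_{\overline{X}/k}$-module and hence locally free by the standard Fitting-ideal argument. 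You should check that the Cartier step does not already presuppose local freeness (it does not: Cartier's theorem applies to coherent sheaves, and Frobenius pullback on a smooth base preserves reflexivity).
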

\begin{proof}
	This is a direct consequence of \cite[Thm.~3.5]{Gieseker/FlatBundles}.
\end{proof}

\begin{Remark}
	We will see later on (Theorem \ref{thm:tauExtension}) that an $(X,\overline{X})$-regular singular
	stratified bundle always has an extension to an
	$\mathcal{O}_{\overline{X}}$-locally free
	$\mathscr{D}_{\overline{X}/k}(\log D_X)$-module.
\end{Remark}

\begin{Proposition}\label{prop:pullbackmultipliesexponents}
	Let $(Y,\overline{Y})$ and $(X,\overline{X})$ be good partial
	compactifications with boundary divisors $D_Y$ and $D_X$,
	and $\bar{f}:\overline{Y}\rightarrow \overline{X}$ a finite morphism, such that
	$\bar{f}(Y)\subset X$, and such that $f:=\bar{f}|_Y$ is \'etale. Let
	$D'_Y$ be a
	component of $D_Y$ mapping to the component $D'_X$ of
	$D_X$.  If
	$E$ an $(X,\overline{X})$-regular singular stratified bundle, then the exponents
	of the $(Y,\overline{Y})$-regular singular bundle $f^*E$ along $D'_Y$ are the
	exponents of $E$ along $D'_X$ multiplied by the ramification index of
	$D'_Y$ over $D'_X$.
\end{Proposition}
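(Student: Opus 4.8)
The statement is local along the generic points of the boundary divisors, so by Proposition~\ref{prop:locality} and the definition of exponents I may shrink $\overline{X}$ and $\overline{Y}$ until $D_X'$ and $D_Y'$ are smooth and are the whole boundary, and until $\overline{E}$ (a choice of $\mathcal{O}_{\overline{X}}$-torsion-free coherent $\mathscr{D}_{\overline{X}/k}(\log D_X)$-module extending $E$) is locally free. After base-changing to the completed local rings at the generic points of $D_X'$ and $D_Y'$, the question becomes one about a finite extension $A\hookrightarrow B$ of complete discrete valuation rings of ramification index $e$ equal to the ramification index of $D_Y'$ over $D_X'$; the extension of residue fields is separable, since $f$ is \'etale. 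The plan is to compute directly how the operators $\delta_y^{(p^m)}$ on $\bar f^*\overline E$ act modulo $y$ on the residue space $\overline E|_{D_X'}\otimes_{k(D_X')} k(D_Y')$, and to match this with the exponent recipe of Proposition~\ref{prop:globaldecomp}.

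The key computational input is already in hand: formula~\eqref{extformula} in the proof of Proposition~\ref{prop:logfunctoriality} gives, for uniformizers $x\in A$, $y\in B$ with $x=uy^e$,
\[
	\delta_y^{(p^m)} \;=\; \sum_{\substack{c+d=m\\ c,d\geq 0}}\binom{e}{p^c}\,\delta_x^{(p^d)} \;+\; y\bigl(B\otimes_A\mathscr{D}_{A/k}(\log\,(x))\bigr).
\]
Now suppose $s$ is a section of $\overline E$ whose class in $\overline E|_{D_X'}$ lies in the eigencomponent $F_\alpha$, i.e.\ $\delta_x^{(p^d)}(s)\equiv\binom{\alpha}{p^d}s \bmod x$ for all $d$ (using Lemma~\ref{lemma:binomials}\ref{lemma:padicExpansionViaBinomials} to see that knowing the action of the $\delta_x^{(p^d)}$ determines $\alpha\in\Z_p$). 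Pulling $s$ back to $\bar f^*\overline E$ and applying the displayed formula, the $y$-term dies modulo $y$, and
\[
	\delta_y^{(p^m)}(s)\;\equiv\;\sum_{\substack{c+d=m\\ c,d\geq 0}}\binom{e}{p^c}\binom{\alpha}{p^d}\,s \;\equiv\; \binom{e\alpha}{p^m}\,s \pmod{y},
\]
where the last congruence is exactly Lemma~\ref{lemma:binomials}(c) with $\beta=e$ (an integer, so its $p$-adic digits are the ordinary ones). By the coordinate description of the decomposition in the Remark after Proposition~\ref{prop:globaldecomp}, this says precisely that the class of $s$ in $(\bar f^*\overline E)|_{D_Y'}$ lies in the eigencomponent for the exponent $e\alpha\in\Z_p$. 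Since pullback of the locally free module $\overline E$ along the flat morphism $\bar f$ identifies $(\bar f^*\overline E)|_{D_Y'}$ with $\overline E|_{D_X'}\otimes_{k(D_X')} k(D_Y')$ compatibly with the decompositions, and since $\overline E|_{D_X'}=\bigoplus_\alpha F_\alpha$, I conclude that the exponents of $f^*E$ along $D_Y'$ are exactly $\{e\alpha : \alpha \text{ an exponent of } E \text{ along } D_X'\}$.

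The only genuinely delicate point is the reduction to a residue-field-separable DVR extension and the compatibility of the eigendecomposition with the change of uniformizer: passing from $x=uy^e$ to a situation where $x=y^e$ requires, as in the proof of Proposition~\ref{prop:logfunctoriality}\ref{logfunct3}, an \'etale extension of $\widehat B$ to extract an $e$-th root of $u$ — but étale extensions change neither differential operators nor the residue field, hence neither the logarithmic $\mathscr{D}$-module structure nor the exponents, so this is harmless. One should also note that the eigencomponents $F_\alpha$ behave well under the residue field extension $k(D_X')\to k(D_Y')$ simply because the eigenvalues $\binom{\bar\alpha}{p^d}$ lie in the prime field $\F_p\subset k$; hence tensoring with $k(D_Y')$ takes $F_\alpha$ to the $\alpha$-eigencomponent of the base change, and no eigencomponents merge or split. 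With these remarks the computation above is complete, and the proposition follows.
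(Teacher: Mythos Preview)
Your argument is correct and essentially identical to the paper's: both reduce to a finite extension $A\hookrightarrow B$ of discrete valuation rings, invoke formula~\eqref{extformula} from the proof of Proposition~\ref{prop:logfunctoriality}, and then apply Lemma~\ref{lemma:binomials}(c) to convert $\sum_{c+d=m}\binom{e}{p^c}\binom{\alpha}{p^d}$ into $\binom{e\alpha}{p^m}$. Your final paragraph about passing to $x=y^e$ via an \'etale extension is unnecessary, since \eqref{extformula} is already proved for arbitrary $x=uy^e$ with $u\in B^\times$ (that reduction was only used in the paper for part~\ref{logfunct3}); but it does no harm.
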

\begin{proof}
	This is again a question about discrete valuation rings. Let $A\hookrightarrow B$
	be a finite extension of discrete valuation rings, such that the extension of
	fraction fields $K(A)\hookrightarrow K(B)$ is separable. Let $x$ be a uniformizer
	for $A$ and $y$ a uniformizer for $B$, $x=uy^e$ with $u\in B^\times$.
	We use the computation from Proposition \ref{prop:logfunctoriality}.
	Let $E$ be an $A$-module with $\mathscr{D}_{A/k}(\log\;(x))$-action. If $a\in
	E$ is such that $\delta^{(p^m)}_x(a)=\binom{\alpha}{p^m}a+xE$ for some
	$\alpha\in \Z_p$, then \eqref{extformula} shows that
	\[\delta^{(p^m)}_y(a\otimes 1)=\sum_{\begin{subarray}{c}c+d=m\\c,d\geq
		0\end{subarray}}\binom{e}{p^c}\binom{\alpha}{p^d}a\otimes 1+y(E\otimes
	B)=\binom{e\alpha}{p^m}a\otimes 1+y(E\otimes B)\]
	which proves the proposition.
\end{proof}

\begin{Proposition}\label{prop:exponentsModZ}
	Let
	$(X,\overline{X})$ be a good partial compactification such that
	$D_X:=\overline{X}\setminus X$ is smooth. Let $E$ be an $(X,\overline{X})$-regular
	singular stratified bundle, and let $\overline{E}$ and 
	$\overline{E}'$ be two locally free $\mathcal{O}_{\overline{X}}$-coherent
	$\mathscr{D}_{\overline{X}/k}(\log D_X)$-modules, such that
	$\overline{E}'|_X=\overline{E}|_X=E$ as $\mathscr{D}_{X/k}$-modules. Write
	\[ \overline{E}|_{D_X}=\bigoplus_{\alpha\in\Z_p} F_{\alpha} \text{ and }
		\overline{E}'|_{D_X} = \bigoplus_{\alpha\in\Z_p} F'_{\alpha},\]
	as in Proposition \ref{prop:globaldecomp}, and  define $\Exp(\overline{E}):=\left\{ \alpha\in\Z_p| F_{\alpha}\neq 0 \right\}$,
	$\Exp(\overline{E}'):=\left\{\alpha\in\Z_p| F'_{\alpha}\neq 0\right\}$.  Then the
	images of $\Exp(\overline{E})$ and
	$\Exp(\overline{E}')$ in $\Z_p/\Z$ are identical.
\end{Proposition}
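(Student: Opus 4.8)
The statement is local along $D_X$, so after shrinking $\overline{X}$ around the generic point $\eta$ of a component of $D_X$ and removing a closed subset of codimension $\geq 2$, I would reduce to the affine situation $\overline{X}=\Spec A$ with $A$ a discrete valuation ring, uniformizer $y$, with $\overline{E}$ and $\overline{E}'$ free $A$-modules of the same rank which agree after inverting $y$, i.e.\ two lattices in the same $\mathscr{D}_{X/k}$-module over $K(A)=A[y^{-1}]$. The first step is to compare the two lattices: since both are $A$-free of full rank inside the common generic fiber $V:=\overline{E}\otimes_A K(A)$, there is an integer $N$ with $y^N \overline{E}\subset \overline{E}'\subset y^{-N}\overline{E}$. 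The plan is to show that multiplication by $y$ shifts exponents by $+1$, and that any two $\mathscr{D}_{\overline{X}/k}(\log D_X)$-stable lattices are connected by a chain of lattices each obtained from the next by elementary modifications along $D_X$, so that passing between $\overline{E}$ and $\overline{E}'$ changes each exponent only by an integer; hence the images in $\Z_p/\Z$ coincide.

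Concretely, I would argue as follows. First, for the twist: if $\overline{E}(m)$ denotes $\overline{E}$ with the $A$-module structure scaled by $y^m$ (equivalently $y^m\overline{E}\subset V$), then the decomposition of Proposition~\ref{prop:globaldecomp} together with the computation $\delta_y^{(p^a)}(y^m s)= \sum_{b+c=a}\binom{m}{p^b}\binom{\alpha}{p^c}\, y^m s \bmod y^{m+1}\overline{E}$ for $s\in F_\alpha$ — which is exactly the binomial identity of Lemma~\ref{lemma:binomials} giving $\binom{m+\alpha}{p^a}$ — shows $\Exp(\overline{E}(m)) = m + \Exp(\overline{E})$. Thus twisting by powers of $y$ translates exponents by integers, leaving the class in $\Z_p/\Z$ unchanged. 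Second, I would show that it suffices to treat the case $\overline{E}\subset \overline{E}'$ with $\overline{E}'/\overline{E}$ killed by $y$: given the chain $y^N\overline{E}\subset \overline{E}'\subset y^{-N}\overline{E}$, after twisting I may assume $\overline{E}'\subset\overline{E}$, and then interpolate by the lattices $\overline{E}_j := \overline{E}'+ y^{j}\overline{E}$ for $j=0,1,\dots$, each of which is $\mathscr{D}_{\overline{X}/k}(\log D_X)$-stable (being a sum of two such submodules of $j_*E$, exactly as in the proof of Proposition~\ref{prop:subbundlesAreRS}\ref{rs-tannakian1}) and satisfies $\overline{E}_{j}/\overline{E}_{j+1}$ killed by $y$. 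So I reduce to: $\overline{E}\subset \overline{E}'$, $M:=\overline{E}'/\overline{E}$ a finite-length $A/y$-module.

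For this core case, restrict the short exact sequence $0\to \overline{E}\to\overline{E}'\to M\to 0$ along $D_X$. The key point is that all three terms carry an $\mathcal{O}_{D_X}$-linear action of $\overline{\mathscr{D}}=\ker(\mathscr{D}_{\overline{X}/k}(\log D_X)\to i_*i^*\mathscr{D}_{\overline{X}/k})$ and decompose into generalized eigenspaces $F_\alpha$ indexed by $\alpha\in\Z_p$, functorially. The snake lemma applied to $0\to\overline{E}\to\overline{E}'\to M\to 0$ with the map "multiplication by $y$" — or rather a direct analysis of $\Tor$ and the connecting map — relates $\overline{E}|_{D_X}$, $\overline{E}'|_{D_X}$ and $M$ (which, being killed by $y$, already equals $M|_{D_X}$) as $\overline{\mathscr{D}}$-modules; explicitly one gets an exact sequence of $\overline{\mathscr{D}}$-modules $0\to \mathcal{T}or_1^{A}(M,A/y)\to \overline{E}|_{D_X}\to \overline{E}'|_{D_X}\to M\to 0$, and since $\mathcal{T}or_1^{A}(A/y,A/y)\cong A/y$ with the same induced $\overline{\mathscr{D}}$-eigenvalue as $A/y$ itself up to a twist by $1$, I conclude that $\Exp(\overline{E})$ and $\Exp(\overline{E}')$ differ only by elements of $\Z_p$ that are integers — i.e.\ their images in $\Z_p/\Z$ agree. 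Assembling the chain back up, the images of $\Exp(\overline{E})$ and $\Exp(\overline{E}')$ in $\Z_p/\Z$ are equal.

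The main obstacle I expect is bookkeeping the $\overline{\mathscr{D}}$-action through the $\Tor$ long exact sequence and pinning down the precise integer shift on the eigenvalue induced on $\mathcal{T}or_1^A(A/y,A/y)$ — this is where the binomial congruences of Lemma~\ref{lemma:binomials} and the coordinate-free description \eqref{eqn:coordinatefreedescription} of the eigenvalue have to be used carefully, since $\overline{\mathscr{D}}$ does not act by scalars on the ambient module $j_*E$, only on the graded pieces along $D_X$. An alternative, perhaps cleaner, route that avoids $\Tor$ altogether: work one exponent at a time by localizing/completing at $\eta$, write $\overline{E}'/\overline{E}$ as a successive extension of copies of $A/y$, and observe directly that each such one-step elementary modification of a $\mathscr{D}_{\overline{X}/k}(\log D_X)$-lattice changes the multiset of exponents modulo $\Z$ not at all — because the modification is governed entirely by a single $F_\alpha$ and replaces it, inside the new lattice, by a piece whose $\overline{\mathscr{D}}$-eigenvalue is $\alpha$ or $\alpha-1$. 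Either way the conclusion follows; I would present whichever makes the eigenvalue-tracking most transparent.
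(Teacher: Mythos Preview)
Your approach is correct and genuinely different from the paper's. Both begin by localizing at $\eta$, reducing to free lattices $\overline{E}\subset\overline{E}'$ over a DVR $A$ with uniformizer $y$, and noting that the twist $y^m\overline{E}$ has exponents shifted by $m$. From there the arguments diverge. The paper works one exponent at a time: given $\alpha_1\in\Exp(\overline{E})$, either some eigenvector for $\alpha_1$ survives in $\overline{E}'/y\overline{E}'$ (so $\alpha_1\in\Exp(\overline{E}')$), or else one replaces $e_1$ by $y^{-1}e_1$ to build an explicit intermediate lattice $\overline{E}_{-1}\supsetneq\overline{E}$ contained in $\overline{E}'$ whose exponents are $\alpha_1-1,\alpha_2,\ldots,\alpha_r$; iterating, finite length of $\overline{E}'/\overline{E}$ forces termination. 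Your route instead interpolates by the $\mathscr{D}(\log)$-stable lattices $\overline{E}'+y^j\overline{E}$ and reduces to the case $M:=\overline{E}'/\overline{E}$ killed by $y$, then reads off the eigenvalue bookkeeping from the $\mathrm{Tor}$ long exact sequence $0\to M\to\overline{E}/y\to\overline{E}'/y\to M\to 0$.

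Two points deserve to be made explicit in your write-up. First, the ``eigenvalue shift by $1$'' on the connecting map is exactly Pascal's identity: if $\delta_y^{(k)}(e')\equiv\binom{\alpha}{k}e'\bmod\overline{E}$ then
\[
\delta_y^{(k)}(ye')=y\bigl(\delta_y^{(k)}+\delta_y^{(k-1)}\bigr)(e')\equiv\Bigl(\tbinom{\alpha}{k}+\tbinom{\alpha}{k-1}\Bigr)ye'=\tbinom{\alpha+1}{k}\,ye'\bmod y\overline{E},
\]
so the boundary $M\to\overline{E}/y$ sends $M_\alpha$ into $F_{\alpha+1}$; this is the computation you flag as the main obstacle, and it is short. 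Second, to extract the conclusion you should say how: the four-term sequence splits into exact pieces $0\to M_{\alpha-1}\to F_\alpha\to F'_\alpha\to M_\alpha\to 0$ for each $\alpha\in\Z_p$, and summing $\dim F_\alpha-\dim F'_\alpha=\dim M_{\alpha-1}-\dim M_\alpha$ over a fixed $\Z$-coset telescopes to zero, so $\sum_{n\in\Z}\dim F_{\alpha+n}=\sum_{n\in\Z}\dim F'_{\alpha+n}$. This yields the slightly stronger statement that the \emph{multiset} of exponents in $\Z_p/\Z$ is preserved, which the paper's argument does not give directly. (Over the DVR your intermediate lattices $\overline{E}_j$ are automatically free, being finitely generated and torsion-free; you should say this since Proposition~\ref{prop:globaldecomp} needs local freeness.) Your ``alternative route'' via length-one elementary modifications is really the same $\mathrm{Tor}$ computation specialized to $M\cong A/y$, so it does not avoid the Pascal step.
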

\begin{proof}
	Let 
	$\eta$ be the generic point of $\overline{X}\setminus X$. To prove the
	proposition, we may shrink $\overline{X}$ around $\eta$, so that
	we can assume that $\overline{X}=\Spec A$ is affine,
	$\overline{E},\overline{E}'$ are free, and that there are coordinates
	$x_1,\ldots, x_n\in
	H^0(\overline{X},\mathcal{O}_{\overline{X}})$ such that $D_X=(x_1)$. Write $j:X\hookrightarrow
	\overline{X}$. Then $\overline{E}\cap \overline{E}', \overline{E}, \overline{E}'\subset j_*E$ are
	$\mathscr{D}_{\overline{X}/k}(\log D_X)$-submodules, and replacing
	$\overline{E}$ by $\overline{E}\cap \overline{E}'$,  we may assume that $\overline{E}\subset \overline{E}'$ is a
	$\mathscr{D}_{\overline{X}/k}(\log D_X)$-submodule.
	
	We may  now consider the situation over $\mathcal{O}_{\overline{X},\eta}$, which is a discrete valuation
	ring with uniformizer $x_1$.	
	For some $n\in \N$ we have $x_1^n\overline{E}'_{\eta}\subset
	\overline{E}_{\eta}\subset \overline{E}'_{\eta}$, and it is
	not difficult to compute that
	$\Exp(x_1^n\overline{E}'_{\eta})=\{\alpha+n|\alpha\in
	\Exp(\overline{E}'_{\eta})\}$.
	Thus it suffices
	to show that if $\alpha\in \Z_p$ is an exponent of
	$\overline{E}_{\eta}$, then $\alpha+N_{\alpha}$ is an exponent of
	$\overline{E}'_{\eta}$ for some $N_{\alpha}\in \Z$.

	Let $\alpha_1,\ldots, \alpha_r$ be the exponents of $\overline{E}_{\eta}$.
	If $e\in \overline{E}_{\eta}\setminus
	x_1\overline{E}'_{\eta}$ is an element such that
	$\delta^{(m)}_{x_1}(e)=\binom{\alpha_1}{m}e+x_1\overline{E}_{\eta}$,
	then $\alpha_1$ also is an exponent of
	$\overline{E}'_{\eta}$.
	If there is no such $e$, let $e_1,\ldots, e_r$ be a lift of a basis of
	$\overline{E}_{\eta}/x_1$, such that
	$\delta_{x_1}^{(m)}(e_i)=\binom{\alpha_i}{m}e_i+x_1\overline{E}_{\eta}$
	for all $m\geq 0$,
	and define $\overline{E}_{-1}$ as the submodule of
	$\overline{E}'_{\eta}$ spanned by $x_1^{-1}e_1,e_2\ldots, e_r$. Note that
	$\overline{E}_{\eta}\subsetneq\overline{E}_{-1}$. It is
	readily checked that $\overline{E}_{-1}$ is stable under the
	$\delta^{(m)}_{x_1}$, and we show that $\alpha_1-1$ is an exponent of
	$\overline{E}_{-1}$.        
	Since $\mathcal{O}_{\overline{X},\eta}/(x_1)$ is a field, and since
	$e_i\not\in x_1\overline{E}_{-1}$ for $i>1$, one finds
	$t\in \overline{E}_{-1}$, such that
	$t,e_2,\ldots, e_r$ is a lift of a	basis of $\overline{E}_{-1}/x_1\overline{E}_{-1}$, and such that
	$\delta_{x_1}^{(m)}(t)=\binom{\beta}{m}t+x_1\overline{E}_{-1}$,
	for some $\beta\in \Z_p$.
	We compute:
	\[\delta^{(m)}_{x_1}(x_1^{-1}e_1)=\binom{\alpha_1-1}{m}x_1^{-1}e_1+f+x_1\overline{E}_{-1}\]
	for some $f\in \left<e_{2},\ldots,e_r\right>$.
	On the other hand, writing 
	$x_1^{-1}e_1=\lambda_1 t+\sum_{j>1}\lambda_j
	e_j$ with $\lambda_i \in \mathcal{O}_{\overline{X},\eta}$, we get
	\[
		\delta^{(m)}_{x_1}(x_1^{-1}e_1)=
		\binom{\beta}{m}\lambda_1 t +
		\sum_{j>1}\binom{\alpha_j}{m}\lambda_{j}e_j+x_1\overline{E}_{-1}.
	\]
	Since  $x_1^{-1}e_{1}\not\in \left<e_2,\ldots,
	e_r\right>$, we have $\lambda_1\not\in (x_1)$. Hence, comparing
	coefficients shows that $\beta=\alpha_1-1$,
	which shows that $\alpha_1-1$ is an exponent of
	$\overline{E}_{-1}$.

	If there is $e\in
	\overline{E}_{-1}\setminus x_1\overline{E}'_{\eta}$ such
	that
	$\delta_{x_1}^{(m)}(e)=\binom{\alpha_1-1}{m}e+x_1\overline{E}_{-1}$
	then $\alpha_1-1$ is an exponent of $\overline{E}'_{\eta}$. Otherwise we construct
	$\overline{E}_{-2}:=(\overline{E}_{-1})_{-1}\supsetneq \overline{E}_{-1}$ contained in
	$\overline{E}'_{\eta}$ as above, with exponent $\alpha_1-2$. Since
	$\overline{E}'_{\eta}/\overline{E}_{\eta}$ has finite length,
	this process has to terminate, so there is some $n$ such that
	$\alpha_1-n$ is an exponent of $\overline{E}'_{\eta}$.

		\end{proof}
\begin{Definition}
	Let $(X,\overline{X})$ be a good partial compactification, such that
	$D_X:=\overline{X}\setminus X=\bigcup_{i=1}^r D_i$, with $D_i$ smooth divisors. If
	$E$ is an
	$(X,\overline{X})$-regular singular bundle on $X$, let $\overline{E}$ be an
	$\mathcal{O}_{\overline{X}}$-torsion-free $\mathcal{O}_{\overline{X}}$-coherent $\mathscr{D}_{\overline{X}/k}(\log
	D_X)$-module extending $E$.  Then write $\Exp_i(E)$ for the image of
	the set of exponents of $\overline{E}$ along $D_i$ in $\Z_p/\Z$. The set $\Exp_i(E)$ is independent of the choice of $\overline{E}$ and
	it is called the \emph{set of exponents of $E$ along $D_i$}. Finally, write
	$\Exp_{(X,\overline{X})}(E)=\bigcup_i \Exp_i(E)$.
\end{Definition}
\begin{Remark}
	We emphasize that by definition the exponents of an $(X,\overline{X})$-regular singular bundle
	are elements of $\Z_p/\Z$, while the exponents of an $\mathcal{O}_{\overline{X}}$-locally free
	$\mathscr{D}_{\overline{X}/k}(\log D_X)$-module are elements
	of $\Z_p$.
\end{Remark}
\section{$\tau$-Extensions of $(X,\overline{X})$-Regular Singular Stratified Bundles}\label{sec:tauextension}
In this section, we study in which ways a given $(X,\overline{X})$-regular
singular bundle $E$ can extend to a $\mathcal{O}_{\overline{X}}$-locally free
$\mathscr{D}_{\overline{X}/k}(\log D_X)$-module.
\begin{Definition}
	Let $(X,\overline{X})$ be a good partial compactification,
	$D_X$ the boundary divisor, and $\tau:\Z_p/\Z\rightarrow
	\Z_p$ a set-theoretical section of the projection $\Z_p\rightarrow \Z_p/\Z$. If $E$ is an
	$(X,\overline{X})$-regular singular bundle, then a \emph{$\tau$-extension of $E$}
	is a finite rank, $\mathcal{O}_{\overline{X}}$-locally free
	$\mathscr{D}_{\overline{X}/k}(\log D_X)$-module $\overline{E}^{\tau}$ such that the
	exponents of $\overline{E}^{\tau}$ lie in the image of $\tau$.
\end{Definition}

\begin{Theorem}\label{thm:tauExtension}
	If $(X,\overline{X})$ is a good partial compactification,
	$D_X:=\overline{X}\setminus X$, $E$ an $(X,\overline{X})$-regular singular
	bundle on $X$, and $\tau:\Z_p/\Z\rightarrow \Z_p$ a section of the projection, then
	a $\tau$-extension of $E$ exists and is unique up to isomorphisms which restrict
	to the identity $E\rightarrow E$ on $X$.
\end{Theorem}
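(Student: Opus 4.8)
The plan is to prove existence and uniqueness separately, in each case reducing to a local problem at the generic points of $D_X$. By Proposition \ref{prop:locality} (and Lemma \ref{lemma:restriction}\ref{item:restrictionRegular4}, which lets one extend coherent torsion-free $\mathscr{D}_{\overline{X}/k}(\log D_X)$-modules across closed subsets of codimension $\geq 2$ by pushforward) it suffices to treat, near each generic point $\eta$ of $D_X$, the case where $\overline{X}=\Spec A$ is affine with coordinates $x_1,\dots,x_n$ such that $D_X=(x_1)$; one then works over the discrete valuation ring $\mathcal{O}_{\overline{X},\eta}$ with uniformizer $t:=x_1$, and afterwards spreads out, glues along $X$, and pushes forward across codimension $\geq 2$ exactly as in the proof of Proposition \ref{prop:locality}.

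For \emph{existence} I would start from any $\mathcal{O}_{\overline{X}}$-locally free, $\mathcal{O}_{\overline{X}}$-coherent $\mathscr{D}_{\overline{X}/k}(\log D_X)$-module $\overline{E}_0$ extending $E$ (which exists after shrinking, by the definition of $(X,\overline{X})$-regular singularity), decompose $\overline{E}_0|_{D_X}=\bigoplus_\alpha F_\alpha$ as in Proposition \ref{prop:globaldecomp}, and perform a sequence of elementary Hecke-type modifications along $D_X$ to move the exponents into the image of $\tau$. The basic step is the one already appearing in the proof of Proposition \ref{prop:exponentsModZ}: enlarging $\overline{E}$ along the summand $F_\alpha$ (replacing a lift of a line in $F_\alpha$ by $t^{-1}$ times it) produces a new $\mathcal{O}_{\overline{X}}$-locally free $\mathscr{D}_{\overline{X}/k}(\log D_X)$-module whose multiset of exponents is that of $\overline{E}$ with one copy of $\alpha$ lowered to $\alpha-1$, while the dual modification $\overline{E}\rightsquigarrow\ker(\overline{E}\to\overline{E}|_{D_X}\to F_\alpha)$ raises one copy of $\alpha$ to $\alpha+1$. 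Since $\alpha-\tau(\bar\alpha)\in\Z$ for every exponent $\alpha$, iterating these steps and tracking the nonnegative integer $\sum_\alpha(\dim F_\alpha)\cdot|\alpha-\tau(\bar\alpha)|$ shows the process terminates in a $\tau$-extension over $\mathcal{O}_{\overline{X},\eta}$. Spreading out, repeating at every generic point of $D_X$ and gluing gives a coherent torsion-free $\mathscr{D}_{\overline{X}/k}(\log D_X)$-module on $\overline{X}$ with exponents in $\image\tau$; being of the form $j_*$ of a locally free module, it is reflexive, and since distinct elements of $\image\tau$ never differ by a nonzero integer, Theorem \ref{thm:reflexive} forces it to be $\mathcal{O}_{\overline{X}}$-locally free, hence a $\tau$-extension.

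For \emph{uniqueness}, let $\overline{E}^{\tau}$ and $\overline{E}'^{\tau}$ be two $\tau$-extensions of $E$. I would pass to $\mathscr{G}:=\HHom_{\mathcal{O}_{\overline{X}}}(\overline{E}^{\tau},\overline{E}'^{\tau})$, which by Proposition \ref{prop:rs-tannakian} is an $\mathcal{O}_{\overline{X}}$-locally free $\mathscr{D}_{\overline{X}/k}(\log D_X)$-module extending $\EEnd_{\mathcal{O}_X}(E)$, and whose exponents along $D_X$ lie in $\Exp(\overline{E}'^{\tau})-\Exp(\overline{E}^{\tau})\subset\image\tau-\image\tau$ by a direct computation over $\mathcal{O}_{\overline{X},\eta}$ analogous to that in Proposition \ref{prop:pullbackmultipliesexponents}. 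Now $\id_E$ is a horizontal section of $\EEnd(E)$; writing $\id_E=t^{v}g_0$ with $g_0$ generating a free rank-one piece of $\mathscr{G}_\eta$, a short computation with the operators $\delta_{x_1}^{(m)}=x_1^{m}\partial^{(m)}_{x_1}$ (using Lemma \ref{lemma:binomials}\ref{lemma:padicExpansionViaBinomials} to see that $\binom{x}{m}$ vanishes for all $m\geq 1$ only when $x=0$) shows that $-v$ is an exponent of $\mathscr{G}$ lying in $\Z$; but $(\image\tau-\image\tau)\cap\Z=\{0\}$, so $v=0$ at every generic point of $D_X$, and $\id_E$ extends to a global section of $\mathscr{G}$ over $\overline{X}$ — i.e.\ to an $\mathcal{O}_{\overline{X}}$-linear morphism $\overline{E}^{\tau}\to\overline{E}'^{\tau}$, automatically $\mathscr{D}_{\overline{X}/k}(\log D_X)$-linear by torsion-freeness, restricting to the identity on $X$. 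Symmetrically one obtains a morphism in the other direction; the two composites restrict to the identity on the dense open $X$, hence are the identity, so $\overline{E}^{\tau}\cong\overline{E}'^{\tau}$ compatibly with $\id_E$.

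The main obstacle is the existence half, and within it the bookkeeping of the elementary modification: one must verify that the modified lattice is stable not merely under the $\delta_{x_1}^{(m)}$ but under all of $\mathscr{D}_{\overline{X}/k}(\log D_X)$ — which comes down to the fact that in the chosen coordinates $\partial^{(m)}_{x_j}$ ($j\geq 2$) commutes with $\delta_{x_1}^{(l)}$, so the $\partial^{(m)}_{x_j}$ preserve the eigenspace decomposition $\bigoplus_\alpha F_\alpha$ — and that the modification alters the exponents exactly as claimed; here the absence of nilpotent residues furnished by Proposition \ref{prop:globaldecomp} is precisely what makes ``the multiset of exponents'' a well-behaved invariant of these operations. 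The gluing step, where Theorem \ref{thm:reflexive} is invoked to recover local freeness on all of $\overline{X}$, is the other point demanding care.
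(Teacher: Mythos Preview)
Your overall architecture matches the paper's: reduce to a neighborhood of a generic point of $D_X$ via Proposition~\ref{prop:locality} and Theorem~\ref{thm:reflexive}, then perform elementary lattice modifications to move the exponents into $\image\tau$. But there is a genuine gap in your existence step.

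\textbf{The gap.} Your lowering modification replaces a lift of a \emph{single line} in $F_\alpha$ by $t^{-1}$ times it, following the construction in the proof of Proposition~\ref{prop:exponentsModZ}. That construction only needed stability under the operators $\delta_{x_1}^{(m)}$, and indeed that is all it has: when $\dim_k F_\alpha>1$, the operators $\partial_{x_j}^{(m)}$ ($j\geq 2$) act on $F_\alpha$ but need not fix your chosen line, so if $e_1,e_2$ both lie over $F_\alpha$ then $\partial_{x_j}^{(m)}(t^{-1}e_1)=t^{-1}\partial_{x_j}^{(m)}(e_1)$ may have a $t^{-1}e_2$--component and fall outside your modified lattice. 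Your own justification (``$\partial_{x_j}^{(m)}$ preserves the decomposition $\bigoplus_\alpha F_\alpha$'') is correct but only supports modifications defined \emph{in terms of the whole summand $F_\alpha$}, not a line inside it. The fix is immediate: lower all of $F_\alpha$ at once (replace $e_1,\dots,e_\ell$ by $t^{-1}e_1,\dots,t^{-1}e_\ell$ where $\alpha_1=\dots=\alpha_\ell=\alpha$), or---as the paper does---first twist by $\mathcal{O}_{\overline{X}}(aD_X)$ to push every exponent below its target and then use only your raising modification $\ker(\overline{E}\to\overline{E}|_{D_X}\to F_\alpha)$, which \emph{is} defined via $F_\alpha$ and therefore is $\mathscr{D}_{\overline{X}/k}(\log D_X)$-stable by exactly the commutation argument you give. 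Note also that this raising modification shifts $\dim F_\alpha$ copies of $\alpha$ to $\alpha+1$, not ``one copy'' as you wrote; your termination bookkeeping via $\sum_\alpha(\dim F_\alpha)\,|\alpha-\tau(\bar\alpha)|$ still works once you raise or lower whole blocks in the correct direction.

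\textbf{Uniqueness.} Your argument here differs from the paper's and is worth recording. The paper proves Lemma~\ref{lemma:localuniquenessoftauextensions} by writing the transition map in coordinates and bounding pole orders directly. Your route via $\mathscr{G}=\HHom(\overline{E}^\tau,\overline{E}'^{\tau})$ is more conceptual: the exponents of $\mathscr{G}$ lie in $\image\tau-\image\tau$, horizontality of $\id_E$ forces its ``order'' $-v$ along $D_X$ to be an exponent of $\mathscr{G}$ (the Vandermonde identity $\sum_{a+b=m}\binom{v}{a}\binom{\beta}{b}=\binom{v+\beta}{m}$ together with Lemma~\ref{lemma:binomials} pins down $\beta=-v$), and $(\image\tau-\image\tau)\cap\Z=\{0\}$ gives $v=0$. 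This is correct and has the advantage of isolating exactly where the $\tau$-condition enters.
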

\begin{proof}
	This proof is an extension of the method of \cite[Lemma
	3.10]{Gieseker/FlatBundles}.

	From Proposition \ref{prop:locality} and Theorem \ref{thm:reflexive} it follows easily that we may assume
	without loss of generality that $D_X$ is a smooth divisor with generic
	point $\eta$, and  to prove the proposition we may shrink
	$\overline{X}$ around $\eta$. Hence, we assume that
	$\overline{X}$ is affine with coordinates $x_1,\ldots, x_n$, such that
	$D_X=(x_1)$, that $E$ is free of rank $r$, and that there exists an
	$\mathcal{O}_{\overline{X}}$-free $\mathscr{D}_{\overline{X}/k}(\log
	D_X)$-extension $\overline{E}$ of $E$.

 Let $\Exp(\overline{E})=\left\{
	\alpha_1,\ldots,\alpha_r \right\}$ be the set of exponents of $\overline{E}$ along
	$D_X$. 
	To prove the existence of a $\tau$-extension, we 
	proceed in two steps:
	\begin{enumerate}[label=(\alph*), ref=(\alph*)]
		\item\label{stepa} If $a\in \Z_{+}$ then			there exists an $\mathcal{O}_{\overline{X}}$-free
			$\mathscr{D}_{\overline{X}/k}(\log D_X)$-module
			$\overline{E}^{(-a)}$ with exponents
			\[ \Exp\left(\overline{E}^{(-a)}\right)=\left\{ \alpha_1-a,\ldots, \alpha_r-a
			\right\},\]
			such that $\overline{E}^{(-a)}|_X=E$.
			Indeed, we can take
			$\overline{E}^{(-a)}:=\overline{E}(aD_X)=\overline{E}\otimes_{\mathcal{O}_{\overline{X}}}
			\mathcal{O}_{\overline{X}}(aD_X)$, because
			$\mathcal{O}_{\overline{X}}(aD_X)$ is defined by $x_1^{-a}$. 
		       \item\label{stepb} From $\overline{E}$ we construct an
			       $\mathcal{O}_{\overline{X}}$-free
			       $\mathscr{D}_{\overline{X}/k}(\log D_X)$-module
			       $\overline{E}_i$ extending $E$, such that 
			       \[\Exp(\overline{E}_i)=\left\{ \alpha_j
				       |\alpha_j \neq \alpha_i
			       \right\}\cup \left\{ \alpha_i+1 \right\}.\]
	\end{enumerate}
	Applying step \ref{stepa} for an appropriate $a\in\Z_+$, and then step
	\ref{stepb}
	repeatedly for various $i$, we obtain a $\tau$-extension.

	We construct $\overline{E}_i$ as in step \ref{stepb} for $i=1$.
	Assume that $\alpha_1=\ldots=\alpha_{\ell}$ and $\alpha_j\neq
	\alpha_1$ for $j>\ell$.
	After perhaps shrinking $\overline{X}$ around $\eta$ there exists a
	lift $e_1,\ldots, e_r\in \overline{E}$ of a basis of
	$\overline{E}/x_1\overline{E}$, such that
	$\delta_{x_1}^{(m)}(e_i)=\binom{\alpha_i}{m}e_i+x_1\overline{E}$ for
	all $m\geq 0$.
	If $j:X\hookrightarrow \overline{X}$ denotes the inclusion, define $\overline{E}_{1}$ to be the
	$\mathcal{O}_{\overline{X}}$-submodule of $j_*E$ spanned by
	$x_1e_1,\ldots, x_1e_{\ell},e_{\ell+1},\ldots, e_r$. Then
	$\left(\overline{E}_1\right)|_X=\overline{E}|_X$. Note that
	$\delta^{(m)}_{x_1}(e_i)\in \overline{E}_1$ for $i>\ell$. For $i\leq
	\ell$ we compute:
	\[\delta_{x_1}^{(m)}(x_1e_i)=\binom{\alpha_1+1}{m}x_1e_i+x_1^2
		\overline{E}\]
	As $x_1^2\overline{E}\subset x_1\overline{E}_1$, we see that
	$\delta_{x_1}^{(m)}(x_1e_i)\in \overline{E}_{1}$, and hence
	$\overline{E}_1$ is stable under the $\delta^{(m)}_{x_1}$.
	Moreover, since
	$x_1e_i\not\in x_1\overline{E}_1$, it follows that
	that
	$\delta_{x_1}^{(m)}(x_1e_i)=\binom{\alpha_1+1}{m}x_1e_i\not\equiv
	0 \!\mod x_1\overline{E}_1$, so  $\alpha_1+1$ is an exponent of
	$\overline{E}_1$.

	To compute the other exponents, note that for every $i>\ell$ there
	exists $f_i\in \left<x_1e_1,\ldots, x_1e_{\ell}\right>$, such that
	\begin{equation}\label{errorterm}\delta^{(m)}_{x_1}(e_i)=\binom{\alpha_i}{m}e_i+f_i+x_{1}\overline{E}_1.\end{equation}
	If $g_1,\ldots, g_r$ is a lift of a basis of $\overline{E}_{1}/x_1$
	such that
	$\delta^{(m)}_{x_1}(g_i)=\binom{\beta_i}{m}g_i+x_1\overline{E}_1$, for
	some $\beta_i\in \Z_p$, and
	$\beta_1=\ldots=\beta_{h}=\alpha_1+1$ for $\ell\leq h\leq r$, then for
	$i>\ell$ we write $e_i=\sum_{j=1}^r\lambda_jg_j$, $\lambda_j\in
	\mathcal{O}_{\overline{X}}$. There are two cases: If $\lambda_j\in (x_1)$ for all $j>h$, then $\alpha_i=\alpha_1+1$.
	Otherwise, if $j_0>h$ is such that $\lambda_{j_0}\not\in(x_1)$, we
	compute
	\[\delta^{(m)}_{x_1}(e_i)=\binom{\alpha_1+1}{m}\sum_{j\leq h}\lambda_j
		g_j + \sum_{j>h}\binom{\beta_j}{m}\lambda_j
		g_j+x_1\overline{E}_1,\]
	and compare coefficients with \eqref{errorterm}. 
	 It follows that $\beta_{j_0}=\alpha_i$, since $f_i\in \left<g_1,\ldots, g_h\right>$.
	This finishes the proof of the existence of a $\tau$-extension.
	Its unicity follows from the following lemma:
	\begin{Lemma}\label{lemma:localuniquenessoftauextensions}
		Let $\overline{X}=\Spec A$ be an affine $k$-variety and
		$x_1,\ldots, x_n\in A$ coordinates such that $D_X=(x_1)$. Let  $X$ be $\overline{X}\setminus
		D_X=\Spec A[x_1^{-1}]$ and $\tau:\Z_p/\Z \rightarrow \Z_p$
		a section of the canonical projection. If $E$ is a free $\mathcal{O}_X$-module
		of rank $r$ with $\mathscr{D}_{X/k}$-action, and if
		$\overline{E}_1$, $\overline{E}_2$ are free $\tau$-extensions of $E$, via
		$\phi:\overline{E}_1|_X\xrightarrow{\sim} \overline{E}_2|_X$, then there is a
		$\mathscr{D}_{\overline{X}/k}(\log D_X)$-isomorphim $\overline{E}_1\rightarrow
		\overline{E}_2$ extending $\phi$.
	\end{Lemma}
	\begin{proof}
		This argument is an adaption of \cite[Prop 4.7]{Baldassarri}.
		Let $M$ be the free $A$-module corresponding to $\overline{E}_1$ and
		$\overline{E}_2$. Denote by
		\[\nabla_i:\mathscr{D}_{\overline{X}/k}(\log D_X)\rightarrow
	\End_k(M)\] the two $\mathscr{D}_{\overline{X}/k}(\log
	D_X)$-actions on $M$, coming from the actions on $\overline{E}_1, \overline{E}_2$.
	By Proposition \ref{prop:exponentsModZ} we know that $\nabla_1$ and $\nabla_2$ have the same
		exponents $\alpha_1,\ldots,\alpha_r$ along $D_X$.
		Let $s_1,\ldots,
		s_r$ be a basis of $M$ such that
		\[\nabla_1\left(\delta_{x_1}^{(k)}\right)(s_i)\equiv{\alpha_i \choose k}
		s_i + x_1 M,\]
		and let $s'_1,\ldots, s'_r$ be a basis of $M$, such that
		\[\nabla_2\left(\delta_{x_1}^{(k)}\right)(s'_i)\equiv{\alpha_i \choose k}
		s'_i + x_1 M.\]
		We need to check that $\phi(s_i)\in M\subset M\otimes_A A[x_1^{-1}]$ for all $i$.
		Let $k(x_1)$
		be the residue field $A_{(x_1)}/x_1A_{(x_1)}$. Fix $i\in \{1,\ldots, r\}$, and
		write $\phi(s_i)=\sum_{j=1}^r f_{ij}s'_j$ with $f_{ij}\in A[x_1^{-1}]$.
		Assume that there is an integer $\ell_i>0$, such that the maximal pole order of
		the $f_{ij}$ along
		$x_1$ is $\ell_i$. Then $x_1^{\ell_i}\phi(s_i)\in M\subset M\otimes_A
		A[x_1^{-1}]$, and also 
		\[x_1^{\ell_i}\nabla_2\left(\delta_{x_1}^{(k)}\right)(\phi(s_i))\in M\subset M\otimes_A A[x_1^{-1}].\] 
		Tensoring the equality
		\[
		x_1^{\ell_i}\phi\left(\nabla_1\left(\delta_{x_1}^{(k)}\right)(s_i)\right)=x_1^{\ell_i}\nabla_2\left(\delta_{x_1}^{(k)}\right)(\phi(s_i))\]
		with $k(x_1)$, we obtain
		\begin{equation}\label{eqn:uniquenessofextension1}
			x_1^{\ell_i}{\alpha_i \choose k}\sum_{j=1}^mf_{ij}
			s'_j=x_1^{\ell_i}\sum_{j=1}^m
			\nabla_2\left(\delta_{x_1}^{(k)}\right)(f_{ij}s'_j).
		\end{equation}
		In the completion of the discrete valuation ring $A_{(x_1)}$, we can write
		$f_{ij}=\sum_{s=-\ell_i}^{d_{ij}} a_{ijs}x_1^s$, with $x_1$ not dividing in the
		$a_{ijs}$, and $a_{ijs}\neq 0$ for $s=-\ell_i$ and some $j$.  
		But then we can compute
		\[ x_1^{\ell_i}\nabla_2\left( \delta_{x_1}^{(k)}\right)(f_{ij}
		s'_j)=\sum_{s=-\ell_i}^{d_{ij}}a_{ijs}x_1^{s+\ell_i}{\alpha_j+s\choose k}s'_j\]
		Then \eqref{eqn:uniquenessofextension1} gives in $k(x_1)$ the equation
		\begin{align*}
			x_1^{\ell_i+s}a_{ijs}{\alpha_i\choose k}=a_{ijs}x_1^{\ell_i+s}{\alpha_j+s\choose
			k},
		\end{align*}
		both sides of which are $0$, except when $s=-\ell_i$. But this implies
		${\alpha_i\choose k}={\alpha_j-\ell_i\choose k}$ for all $k$, and hence
		$\alpha_i=\alpha_j-\ell_i$ by Lucas' Theorem \ref{lemma:binomials}
		\ref{lemma:lucas}, which is is impossible, as $\alpha_1,\ldots, \alpha_r$
		lie in the image of $\tau$, and thus map injectively to $\Z_p/\Z$. Thus $\ell_i=0$
		and hence $\phi(s_i)\in M$.
	\end{proof}
\end{proof}
\begin{Corollary}\label{cor:essentialimage} The essential image of the fully faithful restriction functor
	\[\Strat(\overline{X})\rightarrow \Strat^{\rs}( (X,\overline{X}))\]
	is the full subcategory of $(X,\overline{X})$-regular singular bundles with
	exponents equal to $0\in \Z_p/\Z$.
\end{Corollary}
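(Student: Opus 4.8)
The plan is to prove the two inclusions separately. First I would show that if $\overline{E}$ is a genuine stratified bundle on $\overline{X}$ (i.e.\ an object of $\Strat(\overline{X})$, so an $\mathcal{O}_{\overline{X}}$-coherent $\mathscr{D}_{\overline{X}/k}$-module), then its restriction $E := \overline{E}|_X$ is $(X,\overline{X})$-regular singular with exponents all equal to $0 \in \Z_p/\Z$. The point is that $\overline{E}$ is itself an $\mathcal{O}_{\overline{X}}$-torsion-free (in fact locally free, by Remark~\ref{rem:names}), $\mathcal{O}_{\overline{X}}$-coherent $\mathscr{D}_{\overline{X}/k}(\log D_X)$-module, since $\mathscr{D}_{\overline{X}/k}(\log D_X) \subset \mathscr{D}_{\overline{X}/k}$; so $\overline{E}$ tautologically witnesses regular singularity. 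It remains to compute the exponents. Working locally with coordinates $x_1,\ldots,x_n$ and $D_X = (x_1)$, the operator $\delta_{x_1}^{(m)} = x_1^m \partial_{x_1}^{(m)}$ acts on $\overline{E}$, and since $\overline{E}$ is actually a $\mathscr{D}_{\overline{X}/k}$-module, the operator $\partial_{x_1}^{(m)}$ itself acts. For any section $s$, $\delta_{x_1}^{(m)}(s) = x_1^m \partial_{x_1}^{(m)}(s) \in x_1 \overline{E}$ for $m \geq 1$, so modulo $x_1 \overline{E}$ we get $\delta_{x_1}^{(m)}(s) \equiv 0 = \binom{0}{m} s$; hence $\overline{E}|_{D_X} = F_0$ and the only exponent is $0$.

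For the reverse inclusion, suppose $E$ is $(X,\overline{X})$-regular singular with $\Exp_{(X,\overline{X})}(E) = \{0\}$. By Theorem~\ref{thm:tauExtension}, applied with the section $\tau: \Z_p/\Z \to \Z_p$ chosen so that $\tau(0) = 0$, there exists a $\tau$-extension $\overline{E}^\tau$: an $\mathcal{O}_{\overline{X}}$-locally free $\mathscr{D}_{\overline{X}/k}(\log D_X)$-module with all exponents in the image of $\tau$. Since $E$ has all exponents congruent to $0$ mod $\Z$, and the exponents of $\overline{E}^\tau$ lie in $\tau(\Z_p/\Z)$ and reduce to $\Exp_{(X,\overline{X})}(E) = \{0\}$ in $\Z_p/\Z$ (this is the content of the definition of exponents of $E$ together with Proposition~\ref{prop:exponentsModZ}), all exponents of $\overline{E}^\tau$ are exactly $0 \in \Z_p$. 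I then claim that an $\mathcal{O}_{\overline{X}}$-locally free $\mathscr{D}_{\overline{X}/k}(\log D_X)$-module with all exponents equal to $0$ is automatically a $\mathscr{D}_{\overline{X}/k}$-module, i.e.\ the log-action extends to a genuine stratified bundle on $\overline{X}$. This is again a local computation near the generic point $\eta$ of a smooth component of $D_X$, reducing to the case of a discrete valuation ring $A$ with uniformizer $x_1$: one must show that the operators $\partial_{x_1}^{(m)}$ act on the module, i.e.\ that $\delta_{x_1}^{(m)}$ maps the module into $x_1^m \overline{E}$ (not merely $x_1 \overline{E}$). Having all exponents $0$ means, via Proposition~\ref{prop:globaldecomp}, that $\delta_{x_1}^{(m)}$ acts by $\binom{0}{m} = 0$ on $\overline{E}|_{D_X}$ for $m \geq 1$, which gives $\delta_{x_1}^{(m)}(\overline{E}) \subset x_1 \overline{E}$; one then bootstraps, using the commutation relations among the $\delta_{x_1}^{(m)}$ (or rather the identities $\partial_{x_1}^{(a)} \partial_{x_1}^{(b)} = \binom{a+b}{a} \partial_{x_1}^{(a+b)}$ and Lucas' Theorem~\ref{lemma:binomials}~\ref{lemma:lucas}), to improve $x_1 \overline{E}$ to $x_1^{p^N} \overline{E}$ for all $N$ after increasing the order, which is exactly what is needed for $\partial_{x_1}^{(m)} = x_1^{-m}\delta_{x_1}^{(m)}$ to be well-defined and integral.

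The main obstacle I anticipate is this bootstrapping step in the reverse inclusion: showing that exponents equal to $0$ forces the divisibility $\delta_{x_1}^{(m)}(\overline{E}) \subset x_1^m\overline{E}$ rather than just $\subset x_1\overline{E}$. The naive filtration argument gives only the weaker statement; one needs to exploit the compatibility between the various $\delta_{x_1}^{(m)}$ across all $m$, which is where Lemma~\ref{lemma:binomials} enters. Concretely, one picks a basis $e_1,\ldots,e_r$ lifting a basis of $\overline{E}|_{D_X}$ with $\delta_{x_1}^{(m)}(e_i) \equiv \binom{0}{m} e_i \bmod x_1\overline{E}$, writes $\delta_{x_1}^{(m)}(e_i) = x_1 (\text{something})$, and then uses the relations to conclude inductively that in fact $\partial_{x_1}^{(m)}(e_i) \in \overline{E}$ for all $m$; this is essentially the same style of argument already used in the proof of Lemma~\ref{lemma:restriction}~\ref{item:restrictionRegular1} and in Theorem~\ref{thm:tauExtension}. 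Once the log-$\mathscr{D}$-module structure is upgraded to a $\mathscr{D}_{\overline{X}/k}$-module structure globally on $\overline{X}$ (gluing the local upgrades, which is harmless since the upgrade is canonical), we obtain an object of $\Strat(\overline{X})$ restricting to $E$, completing the proof; full faithfulness of the restriction functor $\Strat(\overline{X}) \to \Strat^{\rs}((X,\overline{X}))$ follows from Lemma~\ref{lemma:restriction}~\ref{item:restrictionRegular2}.
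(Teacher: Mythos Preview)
Your setup matches the paper's: the forward inclusion is exactly as you describe, and for the reverse inclusion the paper also reduces (via Theorem~\ref{thm:tauExtension}) to showing that a locally free $\mathscr{D}_{\overline{X}/k}(\log D_X)$-module $\overline{E}$ with all exponents equal to $0\in\Z_p$ carries a canonical $\mathscr{D}_{\overline{X}/k}$-action.

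Where you diverge is precisely at the step you flag as the obstacle. Your plan is to bootstrap from $\delta_{x_1}^{(m)}(\overline{E})\subset x_1\overline{E}$ to $\delta_{x_1}^{(m)}(\overline{E})\subset x_1^{m}\overline{E}$ by manipulating commutation relations. This works for $m<p$: once $\partial_{x_1}^{(1)}:=x_1^{-1}\delta_{x_1}^{(1)}$ is well-defined on $\overline{E}$, the identity $m!\,\partial_{x_1}^{(m)}=(\partial_{x_1}^{(1)})^m$ lets you divide by $m!$ and recover $\partial_{x_1}^{(m)}$ on $\overline{E}$ for all $m<p$. But at $m=p$ this breaks down, since $p!=0$, and the relations among the $\delta$'s and Lucas' theorem do not by themselves force $\delta_{x_1}^{(p)}(\overline{E})\subset x_1^{p}\overline{E}$. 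The arguments in Lemma~\ref{lemma:restriction}~\ref{item:restrictionRegular1} and Theorem~\ref{thm:tauExtension} that you cite as models do not address this jump either.

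The paper handles this step with Cartier's theorem (\cite[Thm.~5.1]{Katz/Connections}). From the first step one has an honest flat connection on $\overline{E}$, and its $p$-curvature vanishes because $(\partial_{x_1}^{(1)})^p$ already vanishes on $E$ (it is part of a $\mathscr{D}_{X/k}$-module structure there), hence on the torsion-free $\overline{E}$. Cartier descent then writes $\overline{E}=F_{X/k}^*\overline{E}_1$, with $\overline{E}_1$ again a locally free log-$\mathscr{D}$-module with exponents $0$, on which $\delta_{x_1}^{(p)}$ now acts as the new $\delta^{(1)}$. Iterating produces $\partial_{x_1}^{(p^N)}$ for all $N$, and hence the full $\mathscr{D}_{\overline{X}/k}$-action. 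This Frobenius-descent input is the key idea your proposal is missing; without it (or a direct substitute, which would essentially reprove Cartier's theorem), the bootstrapping stalls at order $p$.
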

\begin{proof}
	By the $\tau$-extension theorem \ref{thm:tauExtension},
	we have to show that a finite rank $\mathcal{O}_{\overline{X}}$-locally free
	$\mathscr{D}_{\overline{X}/k}(\log
	D_X)$-module $\overline{E}$ with exponents $0$ has a canonical
	$\mathscr{D}_{\overline{X}/k}$-action. 
	For this we may assume that $\overline{X}$ is affine with coordinates
	$x_1,\ldots, x_n$ such that $D_X=(x_1)$, and that $\overline{E}$ is free. Then having
	exponents $0$ means that for every $e\in \overline{E}$, and for all $m\geq 1$,
	\[ \delta^{(m)}_{x_1}(e)=0\cdot e + x_1\overline{E}.\]
	Thus we can define $\partial^{(1)}_{x_1}(e):=\frac{\delta^{(1)}_{x_1}(e)}{x_1}$.
	In particular, the $\mathscr{D}_{\overline{X}/k}(\log D_X)$-action defines an honest
	flat connection with $p$-curvature $0$ on $\overline{E}$.
	Then, by Cartier's Theorem (\cite[Thm.~5.1]{Katz/Connections}), if $(-)^{(1)}$ denotes
	Frobenius twist, then 
	$\overline{E}=F_{X/k}^*\overline{E}_1$, where $\overline{E}_1$ is the
	$\mathscr{D}_{\overline{X}^{(1)}/k}(\log D_X^{(1)})$-module obtained as the sheaf of sections $s$
	of $\overline{E}$ such that $\partial^{(1)}_{x_i}(s)=0$ for all $i$.
	Moreover, $\overline{E}_1$ also has exponents $0$, and
	$\delta_{x_1}^{(p)}$ acts as $\delta^{(1)}_{x_1^{(1)}}$ on $\overline{E}_1$. We
	reapply the argument to give meaning to the action of
	$\partial_{x_1^{(1)}}^{(1)}=\partial_{x_1}^{(p)}$ on
	$\overline{E}$. Then we apply Cartier's Theorem again, etc.
\end{proof}
\begin{Remark}
	Corollary \ref{cor:essentialimage} reveals a big difference to the classical
	situation over the complex numbers: If $(X,\overline{X})$ is a good partial
	compactification over $\C$, then  a flat connection on $X$ can be
	$(X,\overline{X})$-regular
	singular with all exponents $0$ (i.e.~with nilpotent residues), but still not
	extend to a flat connection on $\overline{X}$.
\end{Remark}
\section{$(X,\overline{X})$-Regular Singular Stratified Bundles with Finite Monodromy}\label{sec:rsfinite}
We are ready to prove  Main Theorem \ref{thm:mainINTRO} with respect to a fixed good partial
compactification. As before, let $k$ denote an algebraically closed field of
characteristic $p>0$.

\begin{Theorem}[$(X,\overline{X})$-Main Theorem]\label{thm:gpcmain}
	Let $(X,\overline{X})$ be a good partial compactification (Definition
	\ref{defn:gpc}). Then for a stratified bundle $E\in \Strat(X)$, the following
	statements are equivalent:
	\begin{enumerate}
		\item $E$ is $(X,\overline{X})$-regular singular and has finite monodromy.
		\item There exists a finite \'etale covering $f:Y\rightarrow X$, tamely
			ramified with respect to $\overline{X}\setminus X$, such that
			$f^*E\in \Strat(Y)$ is trivial.
	\end{enumerate}
\end{Theorem}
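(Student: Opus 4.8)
The plan is to prove the two implications separately. The implication $(b)\Rightarrow(a)$ is a fairly direct application of the log-functoriality from Section~\ref{sec:logdiffops}; the implication $(a)\Rightarrow(b)$ is the substantial one, and I would obtain it by showing that the Picard--Vessiot torsor trivialising $E$ is automatically tamely ramified along $D_X:=\overline{X}\setminus X$.

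For $(b)\Rightarrow(a)$: let $f\colon Y\to X$ be finite étale, tame with respect to $D_X$, and with $f^*E\cong\mathcal{O}_Y^{r}$. Finiteness of the monodromy is immediate, since the unit $E\hookrightarrow f_*f^*E=(f_*\mathcal{O}_Y)^{\oplus r}$ realises $E$ inside a power of the finite bundle $f_*\mathcal{O}_Y$ (Corollary~\ref{cor:FiniteEtaleIsFiniteStratified}), or directly by Corollary~\ref{prop:CharacterizationOfFiniteness}. For regular singularity, I would first extend $f$ to a finite morphism $\bar f\colon\overline{Y}\to\overline{X}$ with $(Y,\overline{Y})$ a good partial compactification, $\bar f^{-1}(X)=Y$, and $\bar f$ tamely ramified along $D_X$; such a $\bar f$ comes from the normalisation of $\overline{X}$ in $k(Y)$, which by Abhyankar's lemma is regular with strict normal crossings boundary after shrinking $\overline{X}$ around the generic points of $D_X$ (cf.\ \cite{Kerz/tameness}), a shrinking permitted by Proposition~\ref{prop:locality}. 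The trivial module $\mathcal{O}_{\overline{Y}}^{\,r}$ extends $f^*E$ as an $\mathcal{O}_{\overline{Y}}$-locally free $\mathscr{D}_{\overline{Y}/k}(\log D_Y)$-module, so $\bar f_*(\mathcal{O}_{\overline{Y}}^{\,r})$ is a $\mathscr{D}_{\overline{X}/k}(\log D_X)$-module by Corollary~\ref{cor:logpullback}; it is $\mathcal{O}_{\overline{X}}$-coherent, torsion free, and restricts to $(f_*\mathcal{O}_Y)^{\oplus r}\supseteq E$. With $j\colon X\hookrightarrow\overline{X}$, the intersection $\overline{E}:=j_*E\cap\bar f_*(\mathcal{O}_{\overline{Y}}^{\,r})$ taken inside $j_*\!\left((f_*\mathcal{O}_Y)^{\oplus r}\right)$ is then an $\mathcal{O}_{\overline{X}}$-coherent, torsion-free $\mathscr{D}_{\overline{X}/k}(\log D_X)$-submodule with $\overline{E}|_X=E$, so $E$ is $(X,\overline{X})$-regular singular.

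For $(a)\Rightarrow(b)$: fix a fibre functor $\omega$ on $\left<E\right>_{\otimes}$ and let $h:=h_{E,\omega}\colon X_{E,\omega}\to X$ be the Picard--Vessiot torsor (Definition~\ref{defn:pvtorsor}). Finiteness of the monodromy makes $G:=G(E,\omega)$ finite constant (Theorem~\ref{thm:dosSantos}), so $h$ is a finite étale Galois covering with $h^*E$ trivial, and $h_*\mathcal{O}_{X_{E,\omega}}=A_{E,\omega}\in\left<E\right>_{\otimes}$ (Proposition~\ref{prop:universaltorsor}). Since $E$ is $(X,\overline{X})$-regular singular, $\left<E\right>_{\otimes}\subseteq\Strat^{\rs}((X,\overline{X}))$ by Proposition~\ref{prop:rs-tannakian}, so $h_*\mathcal{O}_{X_{E,\omega}}$ is $(X,\overline{X})$-regular singular. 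It therefore suffices to prove: if $h\colon Z\to X$ is a finite étale Galois covering and $h_*\mathcal{O}_Z$ is $(X,\overline{X})$-regular singular, then $h$ is tamely ramified with respect to $D_X$; granting this, $h$ itself is the covering required in $(b)$. Since $D_X$ is strict normal crossings, tameness is a condition at the generic points $\eta_i$ of the components $D_i$, where $\mathcal{O}_{\overline{X},\eta_i}$ is a discrete valuation ring with uniformiser, say, $x_1$, and $h$ restricts to a finite separable extension $K\subseteq L$ of the fraction field. The point is that regular singularity of $h_*\mathcal{O}_Z$ near $\eta_i$ --- i.e.\ the existence of a finite $\mathcal{O}_{\overline{X},\eta_i}$-lattice in $L$ stable under $\mathscr{D}_{\overline{X}/k}(\log D_X)$ --- forces the ramification indices to be prime to $p$. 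This is the heart of the matter, and I expect it to be the main obstacle: the mechanism is that if $p\mid e$ for some factor, then after completion, writing $x_1=u\,s^{e}$ with $s$ a uniformiser upstairs and $u$ a unit, the operator $\delta_{x_1}^{(1)}=x_1\partial_{x_1}$ behaves --- because $p\mid e$ kills the leading term of the naive $s\partial_s$ computation --- like a derivation with a pole; it therefore strictly raises pole orders, so no finite lattice can be stable under it, contradicting regular singularity. (This is the phenomenon already visible in the example $f_*\mathcal{O}_{\A^1_k}$ of the introduction.)

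Finally, a structural remark: the same discrete-valuation-ring computation shows, via Proposition~\ref{prop:pullbackmultipliesexponents} applied to \emph{any} finite étale covering trivialising $E$, that the exponents of an $(X,\overline{X})$-regular singular bundle with finite monodromy are torsion of order prime to $p$; here one uses that $\Z_p$ has no element $1/p$, so an equation $e\alpha\in\Z$ with $\alpha\in\Z_p$ and $e=p^{a}m$ forces $m\alpha\in\Z$ even when $p\mid e$. Together with Corollary~\ref{cor:essentialimage} this gives a second, more explicit route to $(a)\Rightarrow(b)$ via a tame Kummer covering along $D_X$ that clears the exponents, followed by a finite étale covering of the resulting partial compactification; however, producing the Kummer covering globally is awkward, which is why I prefer the Picard--Vessiot argument above.
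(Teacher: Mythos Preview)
Your $(b)\Rightarrow(a)$ and your reduction of $(a)\Rightarrow(b)$ to the claim ``if the Picard--Vessiot torsor $h$ has $h_*\mathcal O_{X_{E,\omega}}$ regular singular then $h$ is tame'' are both correct and match the paper (that claim is the paper's Main Lemma~\ref{lemma:gpcmain}).

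The gap is in your direct argument for that claim. It is not true that $\delta_{x_1}^{(1)}$ alone rules out a stable lattice when $p\mid e$. For the Artin--Schreier extension $y^p-y=t^{-1}$ of $k((t))$ one computes $\delta_t(y)=t^{-1}$, whence $\delta_t(t^ky^k)=k\,t^ky^k+k\,t^{k-1}y^{k-1}$; thus the lattice $\bigoplus_{k=0}^{p-1}k[[t]]\cdot t^ky^k$ is stable under $\delta_t^{(1)}$. For $p=2$ even the integral closure $k[[s]]$ (with $s=y^{-1}$) is stable, since there $\delta_t=(1+s)\partial_s$. What genuinely fails in the wild case is stability under the \emph{higher} operators $\delta_{x_1}^{(p^m)}$, and your pole-order heuristic for the order-$1$ operator does not see this.

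Your ``second route'' is the correct one and is exactly the paper's argument; the global awkwardness you fear does not arise, because Proposition~\ref{prop:locality} lets one shrink $\overline X$ to an affine chart around a single generic point of $D_X$, where the Kummer cover $\Spec A[x_1^{1/b}]$ is available on the nose. One then pulls back to clear the exponents (Proposition~\ref{prop:pullbackmultipliesexponents}), extends across the boundary as an honest stratified bundle by Corollary~\ref{cor:essentialimage}, trivializes by a finite \'etale cover of the compactification (Proposition~\ref{prop:restriction}), and concludes via Corollary~\ref{cor:trivialDmod} that this tame cover dominates $h$. Corollary~\ref{cor:essentialimage} is precisely where the full tower $\delta_{x_1}^{(p^m)}$ does the work that your $\delta^{(1)}$-argument cannot.
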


\begin{Remark} Recall that in the situation of Theorem \ref{thm:gpcmain} the morphism $f$ is tamely
	ramified with respect to $\overline{X}\setminus  X$, if the discrete rank $1$
	valuations of $k(X)$ associated with the codimension $1$ points of
	$\overline{X}\setminus X$ are tamely ramified in $k(Y)$.
\end{Remark}

The theorem will be deduced from the following lemma, which is the technical heart of the
proof:

\begin{Lemma}[Main Lemma]\label{lemma:gpcmain} Let $(X,\overline{X})$ be a good partial compactification and
	$f:Y\rightarrow X$ a finite Galois \'etale morphism. Then the stratified
	bundle $f_*\mathcal{O}_Y$ is $(X,\overline{X})$-regular singular, if and only if $f$ is tamely
	ramified with respect to $\overline{X}\setminus X$.
\end{Lemma}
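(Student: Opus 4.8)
The plan is to reduce everything to a statement about an extension of discrete valuation rings at the generic points of the boundary divisor. By Proposition~\ref{prop:locality} the property of being $(X,\overline{X})$-regular singular only depends on neighbourhoods of the generic points $\eta_1,\ldots,\eta_r$ of the components of $D_X:=\overline{X}\setminus X$, and by definition tameness of $f$ is a condition at these same codimension $1$ points. So I fix such an $\eta$, set $R:=\mathcal{O}_{\overline{X},\eta}$ (a discrete valuation ring) and let $S$ be the integral closure of $R$ in $k(Y)$, a semilocal Dedekind ring which is finite over $R$ and carries the $\mathscr{D}_{X/k}$-structure coming from $f$. Choosing coordinates $x_1,\ldots,x_n$ near $\eta$ with $D_X=(x_1)$ as in Remark~\ref{rem:log-diffops-in-coordinates}, the relevant generators of $\mathscr{D}_{\overline{X}/k}(\log D_X)$ are the $\delta_{x_1}^{(m)}=x_1^m\partial_{x_1}^{(m)}$ and the $\partial_{x_j}^{(m)}$ with $j>1$; since $f$ is \'etale the latter already preserve $S$ (Proposition~\ref{prop:logfunctoriality}), so everything hinges on whether $S$, or some $R$-lattice commensurable with it, is stable under the logarithmic operators $\delta_{x_1}^{(m)}$.

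For the direction ``$f$ tame $\Rightarrow$ $f_*\mathcal{O}_Y$ is $(X,\overline{X})$-regular singular'' I would, for each $\eta$, pass to the completion $\widehat{R}\hookrightarrow\widehat{S}$ and apply the discrete valuation ring computation of Proposition~\ref{prop:logfunctoriality} (formula~\eqref{extformula}, valid because the ramification index is prime to $p$) to each factor of $\widehat{S}$; this shows $\widehat{S}$, hence $S$, is stable under all $\delta_{x_1}^{(m)}$. Conceptually this is the statement that, by Abhyankar's lemma, after shrinking $\overline{X}$ around $\eta$ the normalization $\overline{Y}$ of $\overline{X}$ in $k(Y)$ makes $(Y,\overline{Y})$ a good partial compactification with $\bar{f}$ finite, tamely ramified and $\bar f|_Y=f$ \'etale, so that $\bar{f}_*\mathcal{O}_{\overline{Y}}$ is a $\mathscr{D}_{\overline{X}/k}(\log D_X)$-module by Corollary~\ref{cor:logpullback}. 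Either way one gets, near each $\eta$, an $\mathcal{O}$-coherent $\mathcal{O}$-torsion-free $\mathscr{D}(\log D_X)$-module extending $f_*\mathcal{O}_Y$, and Proposition~\ref{prop:locality} glues these to a global extension on $\overline{X}$.

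For the converse ``$(X,\overline{X})$-regular singular $\Rightarrow$ $f$ tame'' I argue by contraposition: suppose $f$ is wildly ramified at some $\eta$. First I reduce the Galois group $G=\Gal(Y/X)$: the $G$-action on $f_*\mathcal{O}_Y$ is by $\mathscr{D}_{X/k}$-automorphisms, so for $H\leq G$ the invariants $(f_*\mathcal{O}_Y)^H=(f|_{Y/H})_*\mathcal{O}_{Y/H}$ form a sub-$\mathscr{D}_{X/k}$-bundle of $f_*\mathcal{O}_Y$, again $(X,\overline{X})$-regular singular by Proposition~\ref{prop:subbundlesAreRS}\ref{rs-tannakian1}; choosing $H$ well I may shrink $G$ while keeping wild ramification at (a point over) $\eta$. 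Next I restrict to a general smooth curve $C\subseteq\overline{X}$ meeting the relevant component of $D_X$ transversally at a general closed point: $C$ is a good partial compactification of $C\cap X$, restriction of stratified bundles preserves regular singularity (Proposition~\ref{prop:pullback}), the residue field at the crossing point is $k$ and hence perfect, and for a general choice of $C$ the covering $Y\times_X(C\cap X)\to C\cap X$ is still wildly ramified there. This reduces the claim to a one-dimensional, local assertion: for a wildly ramified extension $k[[t]]\subseteq k[[\pi]]$ with $t=u\pi^e$, $p\mid e$, the $\mathscr{D}_{k((t))/k}$-module $k((\pi))$ admits no $k[[t]]$-lattice stable under all $\delta_t^{(m)}=t^m\partial_t^{(m)}$.

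This last one-variable claim is the technical heart, and the step I expect to be the main obstacle. I would prove it by a pole-order estimate: differentiating $t=u\pi^e$ and using Lucas' theorem (Lemma~\ref{lemma:binomials}\ref{lemma:lucas}) one computes the order along $\pi=0$ of $\delta_t^{(p^m)}$ applied to a uniformizer of $k[[\pi]]$, and finds that precisely because $e$ is not invertible modulo $p$ the cancellations occurring in the tame case (cf.\ formula~\eqref{functformula}) break down; the pole orders then grow without bound as $m\to\infty$, whereas any $\mathcal{O}_{\overline{X}}$-coherent extension has bounded denominators along $D_X$, a contradiction. Equivalently, one may phrase the obstruction via exponents: a $\delta_t^{(m)}$-stable lattice would have exponents in $\Z_p$ by Proposition~\ref{prop:globaldecomp} (and, combined with Theorem~\ref{thm:tauExtension}, a locally free $\tau$-extension), whereas the local computation forces the ``exponents'' of $k((\pi))$ to be the classes of $j/e$, $0\leq j<e$, which do not lie in $\Z_p$ when $p\mid e$. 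Combining the two directions and gluing over the finitely many $\eta_i$ via Proposition~\ref{prop:locality} completes the proof.
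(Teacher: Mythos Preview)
Your argument for ``tame $\Rightarrow$ regular singular'' is essentially the paper's: both shrink around the generic points of $D_X$, form the normalization $\overline{Y}$, and invoke Corollary~\ref{cor:logpullback}.

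For the converse your route diverges from the paper's and has a genuine gap. The paper does \emph{not} argue by contraposition via a local ``wild $\Rightarrow$ no stable lattice'' computation. It argues constructively: assuming $f_*\mathcal{O}_Y$ is $(X,\overline X)$-regular singular, Proposition~\ref{prop:pullbackmultipliesexponents} (applied to $\bar f:\overline Y\to\overline X$, which is available near the generic point of $D_X$ without any tameness hypothesis) together with the triviality of $f^*f_*\mathcal{O}_Y$ shows the exponents $\alpha_i\in\Z_p/\Z$ satisfy $e\alpha_i\equiv 0$, hence are torsion of order prime to $p$. Choosing a $\tau$-extension with exponents $a_i/b$, $(b,p)=1$, a degree-$b$ Kummer cover $h_1:Z_1\to X$ makes all exponents of $h_1^*f_*\mathcal{O}_Y$ vanish, so by Corollary~\ref{cor:essentialimage} this bundle extends to a genuine stratified bundle on $\overline{Z}_1$. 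Finite monodromy (via Proposition~\ref{prop:restriction}) then yields a finite \'etale $\bar g:\overline Z\to\overline Z_1$ trivializing it. The composite $h:Z\to X$ is tame, and $h^*f_*\mathcal{O}_Y=(f_Z)_*\mathcal{O}_{Y\times_X Z}$ is trivial, so Corollary~\ref{cor:trivialDmod} forces $f_Z$ to be the trivial cover; thus $h$ dominates $f$ and $f$ is tame.

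The gap in your approach is the ``technical heart''. Your exponent formulation does not close: the only constraint one extracts from Proposition~\ref{prop:pullbackmultipliesexponents} is $e\alpha_i\in\Z$, i.e.\ $\alpha_i\in\frac{1}{e}\Z\cap\Z_p=\frac{1}{m}\Z$ with $m$ the prime-to-$p$ part of $e$, and this is perfectly compatible with $\alpha_i\in\Z_p$ --- there is no mechanism forcing the exponents of a hypothetical lattice to be the classes $j/e$ for all $0\le j<e$. (Indeed, the paper's argument shows that when $p\mid e$ the obstruction is not visible at the level of which elements of $\Z_p/\Z$ can occur as exponents.) Your pole-order approach may well be workable --- an Artin--Schreier computation does show $v_\pi(\delta_t^{(p^m)}(y))\to -\infty$ --- but you have not carried it out, and for a general wildly ramified Galois extension this is exactly the hard local statement the paper's Kummer-cover trick is designed to avoid. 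The auxiliary reductions (shrinking $G$ by passing to $Y/H$, and restricting to a transverse curve while preserving wildness) are also left unjustified and are unnecessary in the paper's line of argument, which works directly at the generic point of $D_X$.
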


\begin{proof}[Proof of Theorem \ref{thm:gpcmain} (assuming Lemma \ref{lemma:gpcmain})]
	Let $E$ be an $(X,\overline{X})$-regular singular stratified bundle with finite
	monodromy. Let $\omega:\left<E\right>_{\otimes}\rightarrow \Vectf_k$ be a fiber
	functor, and $h_{E,\omega}:X_{E,\omega}=\Spec A_{E,\omega}\rightarrow X$ the Picard-Vessiot torsor
	associated with $E$ and $\omega$ (Definition \ref{defn:pvtorsor}). Then $A_{E,\omega}\in
	\left<E\right>_{\otimes}$ is $(X,\overline{X})$-regular singular, so the Main
	Lemma \ref{lemma:gpcmain} implies that $h_{E,\omega}$ is tamely ramified with
	respect to $\overline{X}\setminus X$. By construction $h_{E,\omega}^{*}E$ is
	trivial.

	Conversely, if $f:Y\rightarrow X$ is finite \'etale and tamely ramified with
	respect to $\overline{X}\setminus X$, then $E\subset
	f_*f^*E=f_*\mathcal{O}_{Y}^{\rank E}$ as stratified bundles, and $f_*\mathcal{O}_Y$ is
	$(X,\overline{X})$-regular singular by the Main Lemma \ref{lemma:gpcmain}.
\end{proof}

Now to the proof of the Main Lemma \ref{lemma:gpcmain}:
\begin{proof}
	Corollary \ref{cor:logpullback} implies that
	$f_*\mathcal{O}_Y$ is $(X,\overline{X})$-regular singular if $f$ is tamely
	ramified with respect to $\overline{X}\setminus X$. Indeed, we may assume that
	$D_X:=\overline{X}\setminus X$ is a smooth divisor and that $\overline{X}=\Spec A$ is
	affine, such that $\overline{X}\setminus X$ is cut out by a regular element $t\in
	A$. Then $Y=\Spec B$ is affine, and we can write $\overline{Y}$ for the
	normalization of $\overline{X}$ in $k(B)$.  After shrinking $\overline{X}$ around the generic point of
	$D_X$ if necessary, and after replacing $\overline{Y}$ by the
	preimage of the smaller $\overline{X}$, $D_Y:=\overline{Y}\setminus Y$ is a strict
	normal crossings divisor, and we get a commutative diagram
        \[\xymatrix{
	\overline{Y}\ar[r]^{\bar{f}}&\overline{X}\\
	Y\ar@{^{(}->}[u]\ar[r]^{f}&X\ar@{^{(}->}[u]}\]
	Then Corollary \ref{cor:logpullback} applies and shows that
	$f_*\mathcal{O}_Y$ is $(X,\overline{X})$-regular singular.

	The converse is more involved:
	Again we may assume without loss of generality that $D_X$ is a
	smooth irreducible divisor with generic point $\eta$, and in the construction we may shrink
	$\overline{X}$ around $\eta$. 
	We proceed in five steps:
	\begin{enumerate}[label=(\alph*), ref=(\alph*)]
		\item Note that the exponents of $f_*\mathcal{O}_Y$ are torsion in
			$\Z_p/\Z$, because by Proposition
			\ref{prop:pullbackmultipliesexponents}, pulling back
			an $\mathscr{D}_{\overline{X}/k}(\log \overline{X}\setminus X)$-extension of
			$f_*\mathcal{O}_Y$ along $\bar{f}$ multiplies the exponents by the
			ramification indices of $\bar{f}$ along $D_X$, and clearly
			$f^*f_*\mathcal{O}_Y=\mathcal{O}_Y^{\deg f}$ is trivial.
        	\item By Theorem \ref{thm:tauExtension} we find an
			$\mathcal{O}_{\overline{X}}$-coherent,
			$\mathcal{O}_{\overline{X}}$-torsion-free extension
			$\overline{E}$ of $f_*\mathcal{O}_Y$ with 
        		$\mathscr{D}_{\overline{X}/k}(\log D_X)$-action and exponents in $\Z\cap \Q$; say
        		$\frac{a_1}{b},\ldots, \frac{a_r}{b}$ with $(b,p)=1$.
        	\item Shrinking $\overline{X}$ around $\eta$, if necessary, we may assume
        		that $\overline{X}=\Spec A$, with local coordinates $x_1,\ldots,
        		x_n$ such that $D_X=(x_1)$. Then define $\overline{Z}_1:=\Spec
        		A[x_1^{1/b}]$, and let $\bar{h}:\overline{Z}_1\rightarrow
        		\overline{X}$ be the associated covering. Let
        		$Z_1:=\bar{h}^{-1}(X)$ and $h=\bar{h}|_{Z_1}$. Then $h$ is
			\'etale,
        		$\bar{h}$ finite and tamely ramified with respect to
        		$\overline{X}\setminus X$, and $h^*f_*\mathcal{O}_Y$ has
        		exponents equal to $0$ in $\Z_p/\Z$, which means that by
        		Corollary \ref{cor:essentialimage} there exists a stratified bundle
        		$\overline{E}_1\in \Strat(\overline{Z}_1)$ extending
        		$h^*f_*\mathcal{O}_Y$.
        	\item Now we claim that there exists a finite \'etale covering
        		$\bar{g}:\overline{Z}\rightarrow\overline{Z}_1$ such that
        		$\bar{g}^*\overline{E}_1$ is trivial. Indeed, this is true for
			$\overline{E}_1|_{Z_1}=h^*f_*\mathcal{O}_Y$ because it is true for $f_*\mathcal{O}_Y$,
        		and by Proposition \ref{prop:restriction} restriction
        		functor $\left<\overline{E}_1\right>_{\otimes}\rightarrow
        		\left<h^*f_*\mathcal{O}_Y\right>_{\otimes}$ is an equivalence.        
		\item We can finish up: Write $Z:=\bar{g}^{-1}(Z_1)$,
        		$g=\bar{g}|_{Z}$ and $h=h_{1}g$. Then we have the following
        		diagram of finite \'etale maps
        		\[\xymatrix{
        		Y\times_X Z\ar[dd]_{h_Y}\ar[r]^--{f_Z}&
        		Z\ar[d]^{g}\ar@/^1cm/[dd]^{h}\\
        		& Z_1\ar[d]^{h_1}\\
        		Y\ar[r]^{f}& X}\]
        		and $h$ is tamely ramified with respect to $\overline{X}\setminus
        		X$ by construction. But also by construction $h^*f_*\mathcal{O}_Y$
        		is trivial, and since
			$h^*f_*\mathcal{O}_Y=f_{Z,*}\mathcal{O}_{Y\times_X Z}$,
			Corollary \ref{cor:trivialDmod} shows that $f_Z$ is the trivial
			covering. But this means that the covering $h:Z\rightarrow X$
			dominates $f:Y\rightarrow X$, so $f$ is tamely ramified with
			respect to $\overline{X}\setminus X$.
	\end{enumerate}
\end{proof}

Now write $D_X:=\overline{X}\setminus X$ and denote by $\pi_1^{D_X}(X,x)$ the profinite group
associated with the Galois category of finite \'etale coverings of $X$ which are
tamely ramified with respect to $D_X$.
\begin{Corollary}
	Let $(X,\overline{X})$ be a good partial compactification, and
	$D_X:=\overline{X}\setminus X$. Let $x\in X(k)$ be a
	rational point. Then the fiber functor $\omega_x:\Strat^{\rs}(
	(X,\overline{X}))\rightarrow
	\Vectf_k$ induces an equivalence of the category of $(X,\overline{X})$-regular
	singular stratified bundles with finite monodromy with the category
	$\Repf_{k}^{\cont}\pi_1^{D_X}(X,x)$. 

	In other words: 	If $\pi_1^{D_X}(X,x)=\varprojlim_i G_i$ with $G_i$ finite, then the maximal pro-\'etale quotient of $\pi_1(\Strat^{\rs}(
	(X,\overline{X})),\omega_x)$ is $\pi_1^{D_X}(X,x)_k:=\varprojlim_i (G_i)_k$, where
	$\omega_x$ is the neutral fiber functor associated with $x$.
\end{Corollary}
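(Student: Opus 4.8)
The plan is to deduce this corollary from the $(X,\overline{X})$-Main Theorem \ref{thm:gpcmain} together with the Tannakian description of monodromy groups. First I would recall that $\Strat^{\rs}((X,\overline{X}))$ is a $k$-linear Tannakian category by Proposition \ref{prop:rs-tannakian}, and that $x\in X(k)$ gives a neutral fiber functor $\omega_x$ by restricting the fiber functor of Proposition \ref{prop:stratistannakian}; thus $\omega_x$ induces an equivalence $\Strat^{\rs}((X,\overline{X}))\xrightarrow{\cong}\Repf_k\pi_1(\Strat^{\rs}((X,\overline{X})),\omega_x)$. The subcategory of objects with finite monodromy is a sub-Tannakian category, and an object $E$ has finite monodromy if and only if, in the associated representation of $\pi_1$, the image of $\pi_1$ is finite; hence the full subcategory of finite-monodromy objects corresponds to $\Repf_k^{\cont}$ of the maximal pro-étale (equivalently, pro-finite-étale) quotient $\pi^{\et}$ of $\pi_1(\Strat^{\rs}((X,\overline{X})),\omega_x)$. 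So everything reduces to identifying $\pi^{\et}$ with $\pi_1^{D_X}(X,x)_k$.

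To make this identification I would argue at the level of Galois categories / pro-finite groups. On the one hand, Theorem \ref{thm:gpcmain} says an $(X,\overline{X})$-regular singular $E$ has finite monodromy exactly when it is trivialized by some $f:Y\to X$ finite étale, tamely ramified along $D_X$. Combined with Proposition \ref{prop:universaltorsor} \ref{prop:universaltorsorE}--\ref{prop:universaltorsorF} and Corollary \ref{cor:galoisClosure}, the finite-monodromy $(X,\overline{X})$-regular singular bundles are precisely the subquotients of $f_*\mathcal{O}_Y$ for $f$ ranging over finite tame (with respect to $D_X$) Galois coverings, and $G(f_*\mathcal{O}_Y,\omega_f)$ is the constant group scheme $\Gal(Y/X)_k$. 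Passing to the limit over all such $f$ (which form a cofiltered system cofinal in the system defining $\pi_1^{D_X}(X,x)$), the pro-étale quotient $\pi^{\et}$ is the projective limit of these constant group schemes $\Gal(Y/X)_k$, i.e.\ $\varprojlim_i (G_i)_k = \pi_1^{D_X}(X,x)_k$, where $\pi_1^{D_X}(X,x)=\varprojlim_i G_i$. The compatibility of the transition maps is furnished by Proposition \ref{prop:universaltorsor} \ref{prop:universaltorsorB}. Finally, $\Repf_k$ of a constant profinite group scheme $\varprojlim_i(G_i)_k$ is by definition $\Repf_k^{\cont}\pi_1^{D_X}(X,x)$, which yields the claimed equivalence, functorial in a base point $x$ via the functoriality of $\omega_x$.

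The main obstacle I expect is the careful bookkeeping in passing from a single good partial compactification and a single fiber functor to the pro-object: one must check that the tame Galois coverings trivializing $(X,\overline{X})$-regular singular finite bundles are cofinal in \emph{all} tame Galois coverings (this is where the ``if'' direction of Lemma \ref{lemma:gpcmain}, via Corollary \ref{cor:logpullback}, is essential — every tame cover does give a regular singular $f_*\mathcal{O}_Y$), and that the monodromy groups assemble into a genuine projective system of constant group schemes whose limit is the pro-étale quotient. One subtlety worth spelling out is that a priori $\pi_1(\Strat^{\rs}((X,\overline{X})),\omega_x)$ may be a non-trivial extension with unipotent or other non-étale part (coming from regular singular bundles with infinite monodromy, cf.\ the Cartier-descent picture in Corollary \ref{cor:essentialimage}); but taking the maximal pro-étale quotient — equivalently restricting to the finite-monodromy objects, which by Theorem \ref{thm:dosSantos} have \emph{constant} (hence étale) monodromy groups since $k$ is algebraically closed — isolates exactly $\pi_1^{D_X}(X,x)_k$, and this is the only place where the algebraic closedness of $k$ is used.
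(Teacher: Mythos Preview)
Your proposal is correct and is precisely the argument the paper leaves implicit: the corollary is stated without proof, as it follows directly from Theorem \ref{thm:gpcmain} together with the Tannakian formalism of Proposition \ref{prop:universaltorsor} and Corollary \ref{cor:galoisClosure}. Your expansion of these details---in particular the cofinality of tame Galois covers (using both directions of Lemma \ref{lemma:gpcmain}) and the passage to the pro-\'etale quotient via Theorem \ref{thm:dosSantos}---is exactly what the paper has in mind.
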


\section{Regular Singular Stratified Bundles in General}\label{sec:rsingeneral}
Since resolution of singularities is not available in positive characteristic, we
unfortunately cannot use good compactifications to define regular singularity of
stratified bundles in positive characteristic.  In this section we present a definition
which
works in general, and we generalize the results from the previous sections to this new
notion of regular singularity.

We continue to denote by $k$ an algebraically closed field of characteristic $p>0$, and by
$X$ a smooth, connected, separated $k$-scheme of finite type.

\begin{Definition}
	A stratified bundle $E$ on $X$ is called \emph{regular singular} if it is
	$(X,\overline{X})$-regular singular for all good partial compactifications
	$(X,\overline{X})$ of $X$. The category $\Strat^{\rs}(X)$ is defined to be the
	full subcategory of $\Strat(X)$ with objects the regular singular stratified
	bundles.
\end{Definition}

This is inspired by the following definition:
\begin{Definition}[Kerz-Schmidt, Wiesend, \cite{Kerz/tameness}]A finite \'etale morphism $f:Y\rightarrow X$ 
	is called \emph{tame} if the induced extension $k(X)\hookrightarrow k(Y)$ is
	tamely ramified with respect to every \emph{geometric discrete rank $1$ valuation
	of $K(X)$}. Here a discrete rank $1$ valuation of $K(X)$ is called
	\emph{geometric} if its valuation ring appears as the local ring of a codimension
	$1$ point on some model of the function field $K(X)$ of $X$.
\end{Definition}

\begin{Remark}
	It is not difficult to see that $f:Y\rightarrow X$ is tame if and only if it is
	tamely ramified with respect to $\overline{X}\setminus X$ for all good partial
	compactifications $(X,\overline{X})$ of $X$.
\end{Remark}

Proposition \ref{prop:rs-tannakian} immediately implies:
\begin{Proposition}
	The category $\Strat^{\rs}(X)$ is a sub-Tannakian subcategory of $\Strat(X)$.
\end{Proposition}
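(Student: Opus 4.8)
The plan is to read the statement off the definition: $\Strat^{\rs}(X)$ is, by construction, the full subcategory of $\Strat(X)$ whose objects are those $E$ lying in $\Strat^{\rs}((X,\overline{X}))$ for \emph{every} good partial compactification $(X,\overline{X})$ of $X$; in other words $\Strat^{\rs}(X)=\bigcap_{(X,\overline{X})}\Strat^{\rs}((X,\overline{X}))$ inside $\Strat(X)$. Since $\Strat(X)$ is Tannakian by Proposition \ref{prop:stratistannakian}, and each $\Strat^{\rs}((X,\overline{X}))$ is a sub-Tannakian subcategory of it by Proposition \ref{prop:rs-tannakian}, it suffices to observe that an intersection of sub-Tannakian subcategories of a Tannakian category is again one. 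I would prove this by verifying the defining closure properties directly, since each of them propagates through the intersection.

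Concretely, the steps are as follows. First, the unit object $\mathcal{O}_X$ is regular singular, because $\mathcal{O}_{\overline{X}}$ is an $\mathcal{O}_{\overline{X}}$-coherent, torsion-free $\mathscr{D}_{\overline{X}/k}(\log D_X)$-module extending it for every $(X,\overline{X})$; in particular the category is nonempty. Second, given $E,E'\in\Strat^{\rs}(X)$, apply Proposition \ref{prop:rs-tannakian} compactification-by-compactification: for each $(X,\overline{X})$, every subobject $F\subset E$, every quotient $E/F$, the tensor product $E\otimes_{\mathcal{O}_X}E'$, and $\mathcal{H}om_{\mathcal{O}_X}(E,E')$ are $(X,\overline{X})$-regular singular. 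As $(X,\overline{X})$ was arbitrary, all of these lie in $\Strat^{\rs}(X)$, so $\Strat^{\rs}(X)$ is a strictly full abelian subcategory of $\Strat(X)$ closed under subquotients, tensor products, and duals $E^{\vee}=\mathcal{H}om_{\mathcal{O}_X}(E,\mathcal{O}_X)$ (finite direct sums being subobjects of tensor/hom combinations, or handled directly by taking the direct sum of extensions). Third, for $x\in X(k)$ the neutral fiber functor $\omega_x$ of $\Strat(X)$ restricts to an exact, faithful, $k$-linear $\otimes$-functor on $\Strat^{\rs}(X)$, so the latter inherits the Tannakian structure (cf.~\cite{DeligneMilne}).

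I do not expect a genuine obstacle here: the mathematical content is entirely contained in Proposition \ref{prop:rs-tannakian}, and the passage to the intersection over all good partial compactifications is formal. The only two points deserving a word of care are that the indexing family is nonempty --- e.g.\ $(X,X)$ with empty boundary divisor is a good partial compactification and every stratified bundle is trivially $(X,X)$-regular singular --- so the intersection is not degenerate, and that ``sub-Tannakian subcategory'' is to be understood as in \cite{DeligneMilne}, namely a strictly full subcategory stable under subquotients, $\otimes$ and duals, containing the unit object, and on which the ambient fiber functor restricts to a fiber functor; each of these clauses has just been checked.
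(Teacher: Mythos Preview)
Your proposal is correct and matches the paper's approach exactly: the paper simply states that Proposition~\ref{prop:rs-tannakian} immediately implies the result, and your argument unpacks precisely this implication by intersecting the sub-Tannakian subcategories $\Strat^{\rs}((X,\overline{X}))$ over all good partial compactifications. If anything, you supply more detail than the paper does.
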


Lets see that the above definition of regular singularity agrees with the one from
\cite{Gieseker/FlatBundles} in the presence of a good compactification:
\begin{Proposition}
	Assume that $X$ admits a good compactification $\overline{X}$, i.e.~a good partial
	compactification $(X,\overline{X})$ with $\overline{X}$ \emph{proper}. Then a
	stratified bundle $E$ is regular singular, if and only if it is
	$(X,\overline{X})$-regular singular.
\end{Proposition}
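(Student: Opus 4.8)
The plan is to reduce, via Proposition \ref{prop:locality}, to a purely local statement around the generic points of the boundary of $\overline{X}'$, and then to transport the given logarithmic extension over $\overline{X}$ to such a neighborhood, using that $\overline{X}$ is proper. The trivial direction is that a regular singular bundle is $(X,\overline{X})$-regular singular by definition.

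For the converse, I would start with an $(X,\overline{X})$-regular singular $E$ and, using Theorem \ref{thm:tauExtension}, fix a locally free $\mathcal{O}_{\overline{X}}$-coherent $\mathscr{D}_{\overline{X}/k}(\log D_X)$-module $\overline{E}$ extending it. Given an arbitrary good partial compactification $(X,\overline{X}')$, Proposition \ref{prop:locality} reduces the claim to finding, around each generic point $\eta'$ of $D_{X'}:=\overline{X}'\setminus X$, an open $\overline{U}'\subseteq\overline{X}'$ with $E|_{\overline{U}'\cap X}$ being $(\overline{U}'\cap X,\overline{U}')$-regular singular. Since $X$ is integral and is dense open in both $\overline{X}$ and $\overline{X}'$, both compactifications are integral models of $k(X)$, so the identity of $k(X)$ gives a birational map $\overline{X}'\dashrightarrow\overline{X}$ restricting to the inclusion on $X$; as $\overline{X}'$ is smooth (hence normal) and $\overline{X}$ is proper, the valuative criterion of properness applied at the discrete valuation rings $\mathcal{O}_{\overline{X}',\eta'}$ shows this map is defined at every codimension-$1$ point of $\overline{X}'$. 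I would take $\overline{U}'$ to be an open subset, containing all generic points of $D_{X'}$, on which it is represented by an honest morphism $g:\overline{U}'\to\overline{X}$; by construction $g$ agrees with the inclusion $X\hookrightarrow\overline{X}$ on $\overline{U}'\cap X$, so in particular $g(\overline{U}'\cap X)\subseteq X$.

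Then $(\overline{U}'\cap X,\overline{U}')$ is again a good partial compactification (an open of the smooth $\overline{X}'$, with boundary $\overline{U}'\cap D_{X'}$ still strict normal crossings), and Proposition \ref{prop:pullback} applied to $g$ shows that $(g|_{\overline{U}'\cap X})^{*}E$ is $(\overline{U}'\cap X,\overline{U}')$-regular singular. But $g|_{\overline{U}'\cap X}$ is simply the open immersion $\overline{U}'\cap X\hookrightarrow X$, so this pullback is $E|_{\overline{U}'\cap X}$ as a stratified bundle, which is exactly the data Proposition \ref{prop:locality} needs; hence $E$ is $(X,\overline{X}')$-regular singular, and since $(X,\overline{X}')$ was arbitrary, $E$ is regular singular. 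The one genuinely non-formal ingredient is the extension of the birational map $\overline{X}'\dashrightarrow\overline{X}$ across codimension one, where properness of $\overline{X}$ is used; I expect that step, together with checking the identification $(g|_{\overline{U}'\cap X})^{*}E=E|_{\overline{U}'\cap X}$, to be the point to get right, although neither is difficult. Conceptually, the proof rests on the fact that regular singularity is detected at the codimension-$1$ points of the boundary — precisely the locus where a rational map into a proper model is automatically defined.
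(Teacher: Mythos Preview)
Your proposal is correct and follows essentially the same argument as the paper: both use properness of $\overline{X}$ to extend the birational map $\overline{X}'\dashrightarrow\overline{X}$ across the codimension-$1$ boundary points (the paper phrases this as ``remove a closed subset of codimension $\geq 2$ from $\overline{X}'$''), and then pull back a logarithmic extension of $E$ along the resulting morphism. The only superficial difference is that you invoke Proposition~\ref{prop:locality} and Theorem~\ref{thm:tauExtension} explicitly, whereas the paper works directly with any torsion-free coherent extension and appeals to Proposition~\ref{prop:logfunctoriality} in place of Proposition~\ref{prop:pullback}; neither change affects the substance.
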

\begin{proof}
	If $E$ is regular singular, then $E$ is $(X,\overline{X})$-regular
	singular by definition, so it remains to prove the converse. Assume that $E$ is
	$(X,\overline{X})$-regular singular, and  let $(X,\overline{X}')$ be any good
	partial compactification. To show that $E$ is $(X,\overline{X}')$-regular singular,
	we may remove a closed subset of codimension $\geq 2$ from
	$\overline{X}'$. Hence, by
	the properness of $\overline{X}$ we
	may assume that there exists a
	morphism $g:\overline{X}'\rightarrow \overline{X}$, such that $g|_X=\id_X$. Then
	$g$ satisfies the assumptions of Proposition \ref{prop:logfunctoriality}, and
	thus if  $\overline{E}$ is an  $\mathcal{O}_{\overline{X}}$-torsion-free,
	$\mathcal{O}_{\overline{X}}$-coherent $\mathscr{D}_{\overline{X}/k}(\log
	\overline{X}\setminus X)$-module $\overline{E}$ extending $E$, then
	$g^*\overline{E}$ is an $\mathcal{O}_{\overline{X}'}$-torsion-free,
	$\mathcal{O}_{\overline{X}'}$-coherent $\mathscr{D}_{\overline{X}'/k}(\log
	\overline{X}'\setminus X)$-module extending $E$, so $E$ is
	$(X,\overline{X}')$-regular singular.
\end{proof}

The proof of the Main Theorem \ref{thm:mainINTRO} now is simple:
\begin{Theorem}\label{thm:main}
	For a stratified bundle $E\in \Strat(X)$, the following statements are equivalent:	
	\begin{enumerate}
		\item $E$ is regular singular and has finite monodromy.
		\item $E$ is trivialized by a finite \'etale tame morphism.
	\end{enumerate}
\end{Theorem}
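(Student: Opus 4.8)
The plan is to deduce the theorem from the $(X,\overline{X})$-Main Theorem \ref{thm:gpcmain} and the Main Lemma \ref{lemma:gpcmain}, using the two ``for all compactifications'' reformulations already at hand: a stratified bundle is regular singular precisely when it is $(X,\overline{X})$-regular singular for every good partial compactification (by definition), and a finite \'etale morphism is tame precisely when it is tamely ramified along $\overline{X}\setminus X$ for every good partial compactification (the remark following the Kerz--Schmidt definition).

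For the implication $(2)\Rightarrow(1)$, suppose $f\colon Y\to X$ is a finite \'etale tame morphism with $f^*E$ trivial. Then $E$ has finite monodromy by Corollary \ref{prop:CharacterizationOfFiniteness}. Given an arbitrary good partial compactification $(X,\overline{X})$, tameness of $f$ makes it tamely ramified with respect to $\overline{X}\setminus X$, so Theorem \ref{thm:gpcmain} applied to this compactification shows $E$ is $(X,\overline{X})$-regular singular; as $(X,\overline{X})$ was arbitrary, $E$ is regular singular.

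For the implication $(1)\Rightarrow(2)$, the idea is that the Picard--Vessiot torsor provides a single tame covering working uniformly for all boundaries. Pick a fiber functor $\omega\colon\left<E\right>_{\otimes}\to\Vectf_k$ and form the Picard--Vessiot torsor $h_{E,\omega}\colon X_{E,\omega}\to X$ (Definition \ref{defn:pvtorsor}). Since $E$ has finite monodromy, $h_{E,\omega}$ is a finite \'etale Galois covering, it trivializes $E$ by construction, and $h_{E,\omega,*}\mathcal{O}_{X_{E,\omega}}=A_{E,\omega}$, which lies in $\left<E\right>_{\otimes}$ by Proposition \ref{prop:universaltorsor}. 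Now fix any good partial compactification $(X,\overline{X})$: regular singularity of $E$ gives $E\in\Strat^{\rs}((X,\overline{X}))$, hence $\left<E\right>_{\otimes}\subset\Strat^{\rs}((X,\overline{X}))$ by Proposition \ref{prop:rs-tannakian}, so in particular $A_{E,\omega}=h_{E,\omega,*}\mathcal{O}_{X_{E,\omega}}$ is $(X,\overline{X})$-regular singular. The Main Lemma \ref{lemma:gpcmain} then forces $h_{E,\omega}$ to be tamely ramified with respect to $\overline{X}\setminus X$. Letting $(X,\overline{X})$ range over all good partial compactifications, $h_{E,\omega}$ is tame, and it trivializes $E$, which establishes $(2)$.

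The deduction is mostly bookkeeping on top of the earlier results, so there is no genuinely hard new computation; the one point that must be handled with care is that one cannot simply string together the a priori \emph{different} tame coverings furnished by Theorem \ref{thm:gpcmain} for the various compactifications, but must instead observe that the canonical, universal Picard--Vessiot torsor of Proposition \ref{prop:universaltorsor} is simultaneously tame with respect to \emph{every} boundary.
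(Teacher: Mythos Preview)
Your argument is correct and follows the same route as the paper: both directions reduce, via the definition of regular singularity and of tameness, to the $(X,\overline{X})$-Main Theorem \ref{thm:gpcmain} applied to every good partial compactification. Your write-up is in fact more explicit than the paper's terse proof in spelling out that the Picard--Vessiot torsor gives a single covering that is simultaneously tame for all boundaries---a point the paper leaves implicit by writing ``$f$'' without introducing it.
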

\begin{proof}
	The stratified bundle $E$ is regular singular with finite monodromy, if and only
	if $E$ is
	$(X,\overline{X})$-regular singular for every good partial compactification, which
	by
	Theorem \ref{thm:gpcmain} is the case if and only if  $f$ is tamely ramified with respect to
	$\overline{X}\setminus X$ for every good partial compactification, i.e.~if and
	only if $f$ is tame.
\end{proof}

Now write $\pi_1^{\tame}(X,x)$ for the profinite group associated with the Galois
category of all finite \'etale tame coverings of $X$, as defined in
\cite{Kerz/tameness}.
\begin{Corollary}
	Let $x\in X(k)$ be a rational point, and $\omega_x:\Strat^{\rs}(X)\rightarrow
	\Vectf_k$ the associated fiber functor. Then $\omega_x$ induces an equivalence of
	the full subcategory of $\Strat^{\rs}(X)$ with objects the regular singular
	stratified bundles with finite monodromy with the category
	$\Repf_{k}^{\cont}\pi_1^{\tame}(X,x)$.

	In other words, if $\pi_1^{\tame}(X,x)=\varprojlim_i G_i$, then the maximal
	pro-\'etale quotient of $\pi_1(\Strat^{\rs}(X),\omega_x)$ is
	$\pi_1^{\tame}(X,x)_k:=\varprojlim_i (G_i)_k$.
\end{Corollary}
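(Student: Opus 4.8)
The plan is to derive the corollary from the Main Theorem~\ref{thm:main}, the Picard--Vessiot dictionary of Proposition~\ref{prop:universaltorsor}, and the standard description of the fundamental group of a Galois category as a cofiltered limit of finite groups. Let $\mathcal{C}\subset\Strat^{\rs}(X)$ be the full subcategory of regular singular stratified bundles with finite monodromy. By Proposition~\ref{prop:rs-tannakian}, together with the fact that a subquotient, tensor product, or dual of bundles with finite monodromy again has finite monodromy (finiteness being detected by an arbitrary single fiber functor, by the Remark following the definition of ``finite''), $\mathcal{C}$ is a sub-Tannakian subcategory of $\Strat(X)$; hence $\omega_x$ restricts to a fiber functor $\mathcal{C}\to\Vectf_k$ and induces an equivalence $\mathcal{C}\xrightarrow{\cong}\Repf_k\Pi_x$ with $\Pi_x:=\pi_1(\mathcal{C},\omega_x)$. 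Since $\mathcal{C}=\bigcup_{E\in\mathcal{C}}\left<E\right>_{\otimes}$ we have $\Pi_x=\varprojlim_{E\in\mathcal{C}}G(E,\omega_x)$, and each $G(E,\omega_x)$ is finite, hence --- by Theorem~\ref{thm:dosSantos} and $k=\bar k$ --- finite constant; so $\Pi_x$ is the pro-finite constant group scheme attached to the profinite group $\widehat{\Pi}_x:=\varprojlim_{E\in\mathcal{C}}G(E,\omega_x)(k)$, and $\Repf_k\Pi_x=\Repf_k^{\cont}\widehat{\Pi}_x$.

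It remains to identify $\widehat{\Pi}_x$ with $\pi_1^{\tame}(X,x)$, and for this I would exhibit a cofinal family of generators of $\mathcal{C}$. On the one hand, for every pointed finite \'etale tame Galois covering $f\colon Y\to X$, the stratified bundle $f_*\mathcal{O}_Y$ lies in $\mathcal{C}$: it is $(X,\overline{X})$-regular singular for \emph{every} good partial compactification $(X,\overline{X})$ by the easy direction of the Main Lemma~\ref{lemma:gpcmain} (as $f$ is tamely ramified with respect to $\overline{X}\setminus X$ for all such $(X,\overline{X})$), and it has finite monodromy with $G(f_*\mathcal{O}_Y,\omega_x)$ canonically the constant group scheme $\Gal(Y/X)_k$ once a point of $Y$ over $x$ is fixed, by Corollary~\ref{cor:galoisClosure}. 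On the other hand, any $E\in\mathcal{C}$ is trivialized by some finite \'etale tame morphism by Theorem~\ref{thm:main}; replacing it by its Galois closure $f\colon Y\to X$ (still tame, tame coverings forming a Galois category closed under Galois closures, cf.~\cite{Kerz/tameness}) and using Corollary~\ref{cor:galoisClosure} and Proposition~\ref{prop:universaltorsor},~\ref{prop:universaltorsorA}, we get $E\in\left<f_*\mathcal{O}_Y\right>_{\otimes}$. Hence $\mathcal{C}=\bigcup_f\left<f_*\mathcal{O}_Y\right>_{\otimes}$, the union running over the cofiltered poset of pointed finite \'etale tame Galois coverings of $X$, and therefore
\[
\widehat{\Pi}_x=\varprojlim_f G(f_*\mathcal{O}_Y,\omega_x)(k)=\varprojlim_f\Gal(Y/X)=\pi_1^{\tame}(X,x),
\]
the last equality being the definition of the fundamental group of the Galois category of finite \'etale tame coverings. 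This yields $\mathcal{C}\simeq\Repf_k^{\cont}\pi_1^{\tame}(X,x)$ and, passing to associated constant group schemes, $\Pi_x\cong\pi_1^{\tame}(X,x)_k$.

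For the reformulation: the inclusion $\mathcal{C}\subset\Strat^{\rs}(X)$ induces a faithfully flat homomorphism $\pi_1(\Strat^{\rs}(X),\omega_x)\twoheadrightarrow\Pi_x$ by \cite[Prop.~2.21]{DeligneMilne}, and $\Pi_x$ is pro-\'etale. If $Q$ is any pro-\'etale quotient of $\pi_1(\Strat^{\rs}(X),\omega_x)$, corresponding to a Tannakian subcategory $\mathcal{T}\subset\Strat^{\rs}(X)$ with $\pi_1(\mathcal{T},\omega_x)=Q$, then for each $E\in\mathcal{T}$ the group $G(E,\omega_x)$ is a quotient of $Q$, hence finite \'etale, hence finite constant; so $\mathcal{T}\subset\mathcal{C}$ and $Q$ factors through $\Pi_x$. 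Thus $\Pi_x=\pi_1^{\tame}(X,x)_k$ is the maximal pro-\'etale quotient. The genuinely delicate point is the middle step: matching the Tannakian pro-system $\{G(f_*\mathcal{O}_Y,\omega_x)\}_f$ with the Galois-theoretic pro-system defining $\pi_1^{\tame}(X,x)$ compatibly with the choice of base point, and confirming that the easy half of the Main Lemma applies uniformly over all good partial compactifications so that every tame covering indeed contributes an object of $\Strat^{\rs}(X)$; everything else is a formal consequence of the results already established.
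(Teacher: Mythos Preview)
Your proposal is correct and supplies exactly the details the paper omits: in the paper this corollary is stated without proof, as an immediate consequence of Theorem~\ref{thm:main} together with the Picard--Vessiot formalism of Proposition~\ref{prop:universaltorsor} and Corollary~\ref{cor:galoisClosure}. Your argument follows precisely this intended route---identifying the cofinal generators of $\mathcal{C}$ with the $f_*\mathcal{O}_Y$ for tame Galois $f$ and reading off the pro-system of Galois groups---so there is no substantive difference in approach; you have simply written out what the paper leaves implicit.
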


\section{Testing for Regular Singularities on Curves}\label{sec:curves}
If $X$ is a smooth complex variety, then one of the basic facts about a flat connection
$(E,\nabla)$ on
$X$ is that $(E,\nabla)$ is regular singular, if and only if for all regular $\C$-curves
$C$ and all $\C$-morphisms $\phi:C\rightarrow X$, the flat connection $\phi^*(E,\nabla)$ on $C$ is
regular singular. We prove an analogue for stratified bundles with finite monodromy with
respect to our notion of regular singularity in
positive characteristic. For information about stratified bundles with arbitrary
monodromy, see Remark \ref{rem:curves}.

Let $k$ denote an algebraically closed field of characteristic $p>0$. Again we first work with respect to a fixed good partial compactification.

We start with a lemma:
\begin{Lemma}\label{lemma:pvoncurves}
	Let $X$ be a smooth, separated, finite type $k$-scheme, $E$ a stratified bundle
	on $X$ with finite monodromy, and $\omega:\left<E\right>_{\otimes}\rightarrow \Vectf_k$ a fiber functor. 
	Let $\phi:C\rightarrow X$ be a nonconstant morphism with $C$ are regular $k$-curve.
	Then the following statements are true:
	\begin{enumerate}
		\item There exists a fiber functor
			$\omega_\phi:\left<E|_C\right>_{\otimes}\rightarrow \Vectf_k$, such
			that the diagram
			\[\xymatrix{
			\left<E\right>_{\otimes}\ar[d]_{\text{\emph{restriction}}}\ar[r]^{\omega}&\Vectf_k\\
			\left<E|_C\right>_{\otimes}\ar[ur]_{\omega_\phi}}
			\]
			commutes. Here $E|_C:=\phi^*E$.
		\item If $h_{E,\omega}:X_{E,\omega}\rightarrow X$ is the Picard-Vessiot
			torsor associated with $E$ and $\omega$, then
			$h_{E,\omega}\times_X \phi:X_{E,\omega}\times_X C\rightarrow C$ is isomorphic to the disjoint union of
			copies of the Picard-Vessiot torsor
			$h_{E|_C,\omega_\phi}:C_{E|_C,\omega_\phi}\rightarrow C$.
	\end{enumerate}
\end{Lemma}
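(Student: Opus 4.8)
The idea is to use the Tannakian translation of the Picard--Vessiot torsor provided by Proposition~\ref{prop:universaltorsor} together with the restriction compatibility of Lemma~\ref{lemma:restrictionMonodromy}. For part (a), observe that since $\phi:C\to X$ is nonconstant and $C$ is a regular $k$-curve, $\phi$ is dominant, so the restriction functor $\rho:\left<E\right>_{\otimes}\to \left<E|_C\right>_{\otimes}$ makes sense; by Proposition~\ref{prop:restriction}, $E|_C$ again has finite monodromy. A fiber functor $\omega_\phi$ on $\left<E|_C\right>_{\otimes}$ with $\omega_\phi\circ\rho\cong\omega$ is precisely a section over $k$ of the pro-algebraic torsor $\underline{\Isom}_k^\otimes(\omega_\phi\text{-candidates})$; concretely, the obstruction to lifting $\omega$ along $\rho$ is a torsor under the kernel of $G(E|_C,\omega_\phi)\to G(E,\omega)$, but by Lemma~\ref{lemma:restrictionMonodromy}(2) (applied after possibly shrinking, or rather in the form ``any fiber functor downstairs extends'') such a lift exists because $k$ is algebraically closed and all the relevant group schemes are finite, hence $\underline{\Isom}_k^\otimes$ is a torsor under a finite (smooth) $k$-group and therefore has a rational point. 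So I would first choose \emph{any} fiber functor $\omega'$ on $\left<E|_C\right>_{\otimes}$, note that $\omega'\circ\rho$ and $\omega$ are two fiber functors on $\left<E\right>_{\otimes}$, invoke Proposition~\ref{prop:universaltorsor}\ref{prop:universaltorsorC} to get an isomorphism $\omega'\circ\rho\cong\omega$, and transport $\omega'$ through this isomorphism to produce $\omega_\phi$ with $\omega_\phi\circ\rho=\omega$ on the nose.

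For part (b), I would use the functorial description of the Picard--Vessiot torsor as $\underline{\Isom}$-scheme recalled in the proof of Proposition~\ref{prop:universaltorsor}:
\[
X_{E,\omega}\cong\underline{\Isom}_k^{\otimes}\bigl(\omega(-)\otimes_k\mathcal{O}_X,\ \rho_X|_{\left<E\right>_{\otimes}}\bigr),
\qquad
C_{E|_C,\omega_\phi}\cong\underline{\Isom}_k^{\otimes}\bigl(\omega_\phi(-)\otimes_k\mathcal{O}_C,\ \rho_C|_{\left<E|_C\right>_{\otimes}}\bigr).
\]
Pulling back the first along $\phi$ and using that $\phi^*(\rho_X F)=\rho_C(F|_C)$ together with the equivalence $\left<E\right>_{\otimes}\xrightarrow{\sim}\left<E|_C\right>_{\otimes}$ is \emph{not} an equivalence in general — $C$ need not satisfy a codimension hypothesis — so instead I would argue via monodromy groups: by Lemma~\ref{lemma:restrictionMonodromy} the natural map $G(E|_C,\omega_\phi)\hookrightarrow G(E,\omega_\phi\circ\rho)=G(E,\omega)$ is a closed immersion (more precisely, for $U$ open dense it is an isomorphism, and a curve maps to such a $U$ only if... ) hmm. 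Let me instead take the cleaner route: by Proposition~\ref{prop:universaltorsor} the $G$-torsor $X_{E,\omega}$ trivializes every object of $\left<E\right>_{\otimes}$, hence $\phi^*X_{E,\omega}\to C$ trivializes every object of $\left<E|_C\right>_{\otimes}$ (pullback of a trivial stratified bundle is trivial); therefore by Proposition~\ref{prop:universaltorsor}\ref{prop:universaltorsorF}, applied to the finite étale covering $\phi^*h_{E,\omega}$ and the subcategory $\left<E|_C\right>_{\otimes}\subset\Strat(C)$, the covering $\phi^*X_{E,\omega}\to C$ factors through $h_{E|_C,\omega_\phi}$. Conversely $h_{E|_C,\omega_\phi}$ trivializes $E|_C$, which by Lemma~\ref{lemma:restrictionMonodromy}(2) means $G(E|_C,\omega_\phi)=G(E,\omega)=:G$, so $\phi^*X_{E,\omega}$ and $C_{E|_C,\omega_\phi}$ are both $G$-torsors over $C$ and the factorization is a $G$-equivariant map of $G$-torsors — hence an isomorphism over each connected component of $C$. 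Since $\phi^*X_{E,\omega}$ may be disconnected (the pullback of a connected torsor need not be connected), this identifies $\phi^*X_{E,\omega}$ with a disjoint union of copies of $C_{E|_C,\omega_\phi}$, which is exactly the assertion.

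\textbf{Main obstacle.} The delicate point is the identification $G(E|_C,\omega_\phi)\xrightarrow{\sim}G(E,\omega)$. Lemma~\ref{lemma:restrictionMonodromy}(2) is stated for an open dense $U\subset X$, not for a curve $\phi:C\to X$, so I cannot invoke it directly; what is genuinely available is only that $\rho:\left<E\right>_{\otimes}\to\left<E|_C\right>_{\otimes}$ is a tensor functor, giving a closed immersion $G(E|_C,\omega_\phi)\hookrightarrow G$ a priori. The correct fix is to note that $E$ has \emph{finite} monodromy, so $G$ is a finite constant group and $X_{E,\omega}\to X$ is finite étale Galois with group $G(k)$; then one must show $\phi^*X_{E,\omega}\to C$ has ``full'' monodromy $G$ on each component, equivalently that $E|_C$ is not trivialized by a smaller covering. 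This can fail for a \emph{constant} $\phi$, which is why the hypothesis that $\phi$ is nonconstant is essential: a nonconstant map from a curve to $X$ is dominant onto its image, and one reduces to the case of an open curve inside $X$, where I would combine fully-faithfulness of $\rho_U:\Strat(X)\to\Strat(U)$ (Lemma~\ref{lemma:restriction}\ref{item:restrictionRegular2}) with a direct argument that $f_*\mathcal{O}_{X_{E,\omega}}|_C$ generates $\left<E|_C\right>_{\otimes}$ — i.e.\ that $f_*\mathcal{O}_{X_{E,\omega}}$ restricted to $C$ still contains $E|_C$ as a sub-$\mathscr{D}$-module and is itself in $\left<E|_C\right>_\otimes$ — so that $G(E|_C,\omega_\phi)=G(f_*\mathcal{O}_{X_{E,\omega}}|_C,\omega_\phi)$; the latter equals $\Gal$ of a connected component of $\phi^*X_{E,\omega}$ over $C$ by Corollary~\ref{cor:galoisClosure}, and one checks this Galois group is all of $G$ because the covering $\phi^*X_{E,\omega}\to C$ is still Galois (base change of Galois is Galois) with total group $G$. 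I expect the honest write-up of this last step — reconciling "Galois group of each component" with "the torsor is a disjoint union of copies of the universal one" — to be the only place where care is needed; everything else is formal Tannakian bookkeeping.
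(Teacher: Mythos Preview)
Your proposal has a genuine gap in part (b): you are trying to prove that $G(E|_C,\omega_\phi)=G(E,\omega)$, and this is simply false in general. Take $X$ a surface, $E$ with monodromy $\Z/2\Z$, and $\phi:C\hookrightarrow X$ a curve over which the associated double cover splits; then $E|_C$ is trivial and $G(E|_C,\omega_\phi)$ is the trivial group, while $G(E,\omega)=\Z/2\Z$. The hypothesis that $\phi$ is nonconstant does not prevent this, and your attempted argument (``base change of Galois is Galois with total group $G$'') confuses the group acting on the total space of $\phi^*X_{E,\omega}$ with the Galois group of a single connected component. Relatedly, your sentence ``$\phi^*X_{E,\omega}$ and $C_{E|_C,\omega_\phi}$ are both $G$-torsors over $C$'' is wrong: the latter is an $H$-torsor for the subgroup $H:=G(E|_C,\omega_\phi)\subset G$, and the whole point of the phrase ``disjoint union of copies'' in the statement is that there are $[G:H]$ of them.

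The paper's proof avoids this trap by reversing the order of your two parts. It first picks a connected component $f:C'\to C$ of $\phi^*X_{E,\omega}$ and \emph{defines} $\omega_\phi(F):=(f^*F)^\nabla$; this makes the compatibility $\omega_\phi\circ(-)|_C\cong\omega$ a direct computation, and by \cite[Prop.~2.21]{DeligneMilne} yields a closed immersion $H\hookrightarrow G$. The key algebraic input is then the elementary fact that for $H\subset G$ finite groups, the restriction of the right-regular representation decomposes as $k[G]|_H\cong k[H]^{[G:H]}$. Transported through the Tannakian equivalence this reads $A_{E,\omega}|_C\cong A_{E|_C,\omega_\phi}^{[G:H]}$ as $\mathcal{O}_C$-algebras in $\langle E|_C\rangle_\otimes$, and taking $\Spec$ gives exactly $\phi^*X_{E,\omega}\cong\coprod^{[G:H]}C_{E|_C,\omega_\phi}$. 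Your abstract construction of $\omega_\phi$ in part (a) is not wrong, but it leaves you without a concrete handle on $A_{E|_C,\omega_\phi}$, which is why you drifted toward the false equality $H=G$ instead of the representation-theoretic decomposition.

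Two minor points: your citation of Proposition~\ref{prop:restriction} for ``$E|_C$ has finite monodromy'' is misplaced (that proposition is about open subsets, not curves); the correct reason is Corollary~\ref{prop:CharacterizationOfFiniteness} applied to the finite \'etale cover $\phi^*X_{E,\omega}\to C$. And $\phi$ nonconstant does not make $\phi$ dominant when $\dim X>1$; it only ensures $\phi^*$ is a well-behaved tensor functor on stratified bundles.
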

\begin{proof}
	Recall that $G(E,\omega)$ is a finite, constant $k$-group scheme by Theorem
	\ref{thm:dosSantos}; write
	$G$ for the associated finite group.	Then $h_{E,\omega}$ is Galois \'etale with group $G$.
	Consider the fiber product $X_{E,\omega}\times_{X} C\rightarrow C$. This is a
	finite \'etale covering, and it is the disjoint union $\coprod_{j} C_j\rightarrow
	C$, with $C_j$ connected regular curves. Moreover,  all $C_j$ are $C$-isomorphic,
	say to $f:C'\rightarrow C$, as they are permuted by the action of $G$. To
	summarize notation, we have the following commutative diagram
	\begin{equation}
		\begin{split}\label{eq:Cdiag}
		\xymatrix{ C'\ar@{^{(}->}[r]^--i\ar[dr]_{f}&X_{E,\omega}\times_X C=\coprod
		C'\ar[r]^--{\pr}\ar[d]&X_{E,\omega}\ar[d]^{h_{E,\omega}}\\
		&C\ar[r]^{\phi}& X,
		 }
	 \end{split}
	 \end{equation}
	where $i$ is one of the natural inclusions $C'\hookrightarrow\coprod C'$.
	Define $\omega_\phi:\left<E|_C\right>_{\otimes}\rightarrow \Vect_k$ by $F\mapsto
	H^0(\Strat(C'),f^*F)$, see Remark \ref{rem:tannaka}. This is a
	$k$-linear fiber functor since $f^*(E|_C)$ is trivial, and we obtain a
	commutative diagram
	\begin{equation*}
		\xymatrix{ 
		\left<E\right>_{\otimes}\ar[rr]^{|_C} \ar[dr]_{\omega}&&
		\left<E|_C\right>_{\otimes}\ar[dl]^{\omega_\phi}\\
		& \Vect_k
		 }
	 \end{equation*}
	 Indeed, again by Proposition \ref{prop:universaltorsor}, we know that (with notations
	 from \eqref{eq:Cdiag})
	 \begin{align*}\omega(N)&=H^0(\Strat(X_{E,\omega}),\underbrace{h_{E,\omega}^*N}_{\text{trivial}})\\
		 &=H^0(\Strat(C'),i^*\pr^*h_{E,\omega}^*N)\\
		 &=H^0(\Strat(C'),f^*N|_C)\\
		 &=\omega_{\phi}( N|_C)
	 \end{align*}
	 for every object $N\in \left<E\right>_{\otimes}$.
	 By \cite[Prop.
	 2.21b]{DeligneMilne} this implies that $G(\left<E|_C\right>_{\otimes},\omega_{{\phi}})\hookrightarrow
	 G(\left<E\right>,\omega)$ is a closed immersion, and that $f:C'\rightarrow C$ is a
	 $G(\left<E|_C\right>,\omega_{{\phi}})$-torsor; in fact it is the Picard-Vessiot
	 torsor associated with $E|_C$ and $\omega|_C$, according to the following elementary lemma:
	 \begin{Lemma}
		 Let $H\subset G$ be finite groups, and $R\subset G$ be a set of representatives
		 for $G/H$. Then $k[G]=\bigoplus_{r\in R} k[H]$ in the category of $k[H]$-modules.
	 \end{Lemma}
 \end{proof}
From Lemma \ref{lemma:pvoncurves} and the results of \cite{Kerz/tameness} it follows that regular singularity
(at least for stratified bundles with finite monodromy) is a property determined on the
``2-skeleton'' of $X$:
\begin{Theorem}\label{thm:curvesInGeneral}
	Let $X$ be a smooth, finite type $k$-scheme,
	Then a stratified bundle $E$ on
	$X$ with finite monodromy is regular singular, if and only if $E|_C:=\phi^*E$ is
	regular singular for every $k$-morphism $\phi:C\rightarrow X$, with $C$ a regular
	$k$-curve.
\end{Theorem}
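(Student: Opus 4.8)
The plan is to deduce both implications from the Main Theorem \ref{thm:main}, together with two further facts: that a stratified bundle has finite monodromy precisely when it is trivialized by a finite étale covering (Corollary \ref{prop:CharacterizationOfFiniteness}), and the main result of \cite{Kerz/tameness}, by which a finite étale covering of $X$ is tame if and only if its pullback to every regular $k$-curve mapping to $X$ is tame (in particular, the category of tame coverings is stable under base change).

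For the ``only if'' direction, suppose $E$ is regular singular with finite monodromy. By Theorem \ref{thm:main} there is a finite étale tame covering $f\colon Y\to X$ with $f^*E$ trivial. If $\phi\colon C\to X$ is a $k$-morphism with $C$ a regular $k$-curve, then $Y\times_X C\to C$ is a finite étale tame covering of $C$ (tameness being preserved by base change), and it trivializes $\phi^*E=E|_C$; if $\phi$ is constant this is still clear, as then $E|_C$ is pulled back from $\Spec k$ and extends to the smooth compactification of $C$. In either case Theorem \ref{thm:main} over $C$ shows that $E|_C$ is regular singular.

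For the ``if'' direction, assume $E|_C$ is regular singular for every $\phi\colon C\to X$ with $C$ a regular $k$-curve. Fix a fiber functor $\omega\colon\left<E\right>_{\otimes}\to\Vectf_k$; since $E$ has finite monodromy, the Picard-Vessiot torsor $h_{E,\omega}\colon X_{E,\omega}\to X$ (Definition \ref{defn:pvtorsor}) is finite étale Galois with group the finite constant group scheme $G:=G(E,\omega)$ (Theorem \ref{thm:dosSantos}, Proposition \ref{prop:universaltorsor}). By Theorem \ref{thm:main} it suffices to show $h_{E,\omega}$ is tame, and by \cite{Kerz/tameness} it is enough to check that $h_{E,\omega}\times_X C\to C$ is tame for every $\phi\colon C\to X$ with $C$ a regular $k$-curve; this is automatic for constant $\phi$, so we may take $\phi$ nonconstant. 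Fix such a $\phi$, let $\overline{C}$ be the smooth compactification of $C$ — so $(C,\overline{C})$ is a good partial compactification — and choose a compatible fiber functor $\omega_\phi$ on $\left<E|_C\right>_{\otimes}$ as in Lemma \ref{lemma:pvoncurves}. That lemma identifies $h_{E,\omega}\times_X C\to C$ with a disjoint union of copies of the Picard-Vessiot torsor $h_{E|_C,\omega_\phi}\colon C_{E|_C,\omega_\phi}\to C$. Now $E|_C$ is $(C,\overline{C})$-regular singular by hypothesis and has finite monodromy (being trivialized by $h_{E,\omega}\times_X C\to C$), so by Proposition \ref{prop:rs-tannakian} the coordinate algebra $A_{E|_C,\omega_\phi}=h_{E|_C,\omega_\phi,*}\mathcal{O}_{C_{E|_C,\omega_\phi}}$, which lies in $\left<E|_C\right>_{\otimes}$, is again $(C,\overline{C})$-regular singular. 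The Main Lemma \ref{lemma:gpcmain} then forces $h_{E|_C,\omega_\phi}$ to be tamely ramified with respect to $\overline{C}\setminus C$, i.e.\ to be a tame covering of $C$. Hence $h_{E,\omega}\times_X C\to C$ is tame; applying \cite{Kerz/tameness} we conclude that $h_{E,\omega}$ is tame, and Theorem \ref{thm:main} once more gives that $E$ is regular singular.

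The real content is the ``if'' direction: it uses the nontrivial theorem of \cite{Kerz/tameness} that a ``curve-tame'' covering is tame, together with the compatibility — encapsulated in Lemma \ref{lemma:pvoncurves} — between the Picard-Vessiot torsor of $E$ and those of its restrictions $E|_C$. This compatibility is precisely what converts the hypothesis ``$E|_C$ is regular singular for all curves $C$'' into ``$h_{E,\omega}$ is tame along all curves'', which is the input \cite{Kerz/tameness} requires; I expect the bookkeeping around fiber functors and the identification of the relevant Picard-Vessiot torsors to be the only genuinely delicate step.
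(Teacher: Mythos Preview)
Your proof is correct and follows essentially the same route as the paper: both reduce regular singularity of $E$ to tameness of the Picard-Vessiot torsor $h_{E,\omega}$ via Theorem~\ref{thm:main}, invoke \cite[Thm.~4.4]{Kerz/tameness} to test tameness curve by curve, and use Lemma~\ref{lemma:pvoncurves} to identify the base change $h_{E,\omega}\times_X C$ with copies of the Picard-Vessiot torsor of $E|_C$. The only cosmetic differences are that the paper packages both directions into a single chain of equivalences, and that in the final step it appeals directly to Theorem~\ref{thm:main} on $C$ rather than unwinding through Proposition~\ref{prop:rs-tannakian} and Lemma~\ref{lemma:gpcmain} for the compactification $(C,\overline{C})$ as you do.
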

\begin{proof}
	Let $\omega$ be a neutral fiber functor for $\left<E\right>_{\otimes}$, and
	let $h_{E,\omega}:X_{E,\omega}\rightarrow X$ be the Picard-Vessiot torsor
	for $E$ and $\omega$. Clearly	$E$ is regular singular if and only if
	$(h_{E,\omega})_*\mathcal{O}_{X_{E,\omega}}$ is regular singular, which by Theorem
	\ref{thm:main} is equivalent to $h_{E,\omega}$ being tame. By
	\cite[Thm. 4.4]{Kerz/tameness}, $h_{E,\omega}$ is tame if and only if
	$h_{E,\omega}\times_X \phi:X_{E,\omega}\times_X C\rightarrow C$ is tame for all
	$\phi:C\rightarrow X$ as in the claim. But by Lemma \ref{lemma:pvoncurves},
	$h_{E,\omega}\times_X \phi$ is isomorphic to (a disjoint union of copies of) the
	Picard-Vessiot torsor associated with $E|_C$ and the fiber functor $\omega_\phi$
	constructed in Lemma \ref{lemma:pvoncurves}.
	This shows that $E$ is regular singular if and only if $h_{E|_C,\omega_\phi}$ is
	tame for  all $\phi:C\rightarrow X$,  if and only if $E|_C$
	is regular singular for  all $\phi:C\rightarrow X$.
\end{proof}

\begin{Remark}\label{rem:curves}
	It is unknown to the author whether Theorem \ref{thm:curvesInGeneral} remains true
	without the finiteness assumption on the monodromy of $E$. There are partial
	results assuming resolution of singularities, see \cite[Sec.~3.4]{Kindler/thesis}.
\end{Remark}
\providecommand{\bysame}{\leavevmode\hbox to3em{\hrulefill}\thinspace}
\providecommand{\MR}{\relax\ifhmode\unskip\space\fi MR }
\providecommand{\MRhref}[2]{%
  \href{http://www.ams.org/mathscinet-getitem?mr=#1}{#2}
}
\providecommand{\href}[2]{#2}


\end{document}